\documentclass[11pt]{amsart}
\usepackage{amsfonts,amssymb,amsthm,amsmath,amsxtra,amscd,verbatim,eucal}
\usepackage{mathrsfs}				    % curly letters
\usepackage{enumitem}
\usepackage[all]{xy}
\usepackage[dvips]{graphics}
\usepackage{graphicx}
\usepackage{hyperref}
\usepackage{microtype}
\usepackage{verbatim}  % comment out big chunks of text with command \begin{comment} # \end{comment}

\usepackage{color}

%\input{psfig}

%%%%%%%%%%%%%%%%%%%%%%%%%%%%%%%%%%%%%%%%%%%%%%%%%%%%%%%%%%%%%%%%%%%%%%%%

%\setlength{\parindent}{.4 in}
\setlength{\textwidth}{5.9 in}
\setlength{\topmargin} {0 in}        % when at home
\setlength{\evensidemargin}{.3 in}
\setlength{\oddsidemargin}{.3 in}
\setlength{\footskip}{.3 in}
\setlength{\headheight}{.3 in}
\setlength{\textheight}{8.3 in}

\hfuzz50pc  % Don't bother to report overfull boxes if overage is < 50pc
\vfuzz50pc
\sloppy

\setcounter{tocdepth}{1}

%\setcounter{section}{1}

%\pagestyle{plain}

%%%%%%%%%%%%%%%%%%%%%%%%%%%%%%%%%%%%%%%%%%%%%%%%%%%%%%%%%%%%%%%%%%%%%%%%

\theoremstyle{plain}
\newtheorem{thm}{Theorem}[section]

\newtheorem{letthm}{Theorem}

\newtheorem{letcor}[letthm]{Corollary}

\newtheorem{lem}[thm]{Lemma}
\newtheorem{prop}[thm]{Proposition}

\newtheorem{cor}[thm]{Corollary}

\theoremstyle{definition}
\newtheorem{defi}[thm]{Definition}

\theoremstyle{remark}

\newtheorem{rmk}[thm]{Remark}

%\numberwithin{equation}{section}
%\renewcommand{\labelenumi}{(\alph{enumi})}

%%%%%%%%%%%%%%%%%%%%%%%%%%%%%%%%%%%%%%%%%%%%%%%%%%%%%%%%%%%%%%%%%%

\definecolor{darkred}{RGB}{220,0,0}

\newcommand{\bb}[1]{{\mathbb{#1}}}
\newcommand{\mc}[1]{{\mathcal{#1}}}

\def\Z{\bb{Z}}
\def\N{\bb{N}}

\def\C{\bb{C}}
\def\A{\bb{A}}
\def\R{\bb{R}}
\def\Q{\bb{Q}}
\def\P{\bb{P}}

\def\cO{\mc{O}}
\def\cJ{\mc{J}}

\def\cE{\mathcal{E}}

\def\eps{\varepsilon}

\def\X{\mathscr X}
\def\Y{\mathscr Y}

\DeclareMathOperator{\ord} {ord}

\DeclareMathOperator{\GL}{GL}

\DeclareMathOperator{\id}{id}

\DeclareMathOperator{\NL}{NL}
\DeclareMathOperator{\LCep}{{L}_\C^\eps}

\DeclareMathOperator{\Spec}{Spec}
\DeclareMathOperator{\Spf}{Spf}
\DeclareMathOperator{\Div}{Div}
\DeclareMathOperator{\Dual}{Dual}
\DeclareMathOperator{\cent}{c}
\DeclareMathOperator{\For}{For}

\newcommand{\edge}[2]{{#1\!-\!\!\!-#2}}

\newcommand{\LC}[1]{{L}_\C^{#1}}

%This command creates a side note on the external margin
%(right margin on odd pages, left margin on even pages).

\setlength{\marginparwidth}{0.9in}

%%%%%%%%%%%%%%%%%%%%%%%%%%

\title{Links of sandwiched surface singularities and self-similarity}

\date{\today}

\def\cO{\mc{O}}
\author[L. Fantini]{Lorenzo Fantini}
\address{Institut Math\'ematique de Jussieu, 
Universit\'e Pierre et Marie Curie, 
75252 Paris Cedex, France.}
\email{lorenzo.fantini@imj-prg.fr}

\author[C. Favre]{Charles Favre}
\address{CNRS - Centre de Math\'ematiques Laurent Schwartz, 
\'Ecole polytechnique, 
91128 Palaiseau Cedex, France.}
\email{charles.favre@polytechnique.edu}

\author[M. Ruggiero]{Matteo Ruggiero}
\address{Institut Math\'ematique de Jussieu, Universit\'e Paris 7, B\^atiment Sophie Germain, Case 7012, 75205 Paris Cedex 13, France.}
\email{matteo.ruggiero@imj-prg.fr}

\thanks{During the preparation of this article the first and the second authors have been supported by the ERC-starting grant project ``Nonarcomp'' (grant number 307856), and the second author by the ANR project ``D\'efig\'eo''.}

\begin{document}

\begin{abstract}
We characterize sandwiched singularities in terms of their  link in two different settings. We  first prove that such singularities are precisely the normal surface singularities having self-similar non-archimedean links.
We describe this self-similarity both in terms of Berkovich analytic geometry and of the combinatorics of weighted dual graphs.
We then show that a complex surface singularity is sandwiched if and only if its complex link can be embedded in a Kato surface in such a way that its complement remains connected.
\end{abstract}

\maketitle

\tableofcontents

%%%%%%%%%%%%%%%%%%%%%%%%%%%%%%%%%%%%%%%%%%%%%%%%%%%%%%%%%%%%%%%%%%%%%%%%%%%%%%%%%%%%
%%%%%%%%%%%%%%%%%%%%%%%%%%%%%%%%%%%%%%%%%%%%%%%%%%%%%%%%%%%%%%%%%%%%%%%%%%%%%%%%%%%%
%%%%%%%%%%%%%%%%%%%%%%%%%%%%%%%%%%%%%%%%%%%%%%%%%%%%%%%%%%%%%%%%%%%%%%%%%%%%%%%%%%%%

\section{Introduction}
Let $X$ be any complex algebraic variety of dimension $d$ and let $0\in X$ be an isolated singular point.
A classical way  to analyze the geometry of $X$ near its singular point is to consider its (archimedean) \emph{link}, which is defined by embedding the complex analytic germ $(X,0)$ in the germ of a complex affine space $(\C^n,0)$ and taking the intersection with the boundary of a small ball around the origin.
More precisely, if $z_1,...,z_n$ are coordinates for $\C^n$ at zero, the intersection of $X$ with any sphere centered at $0$ of small enough radius $\varepsilon>0$ is transversal, so that 
$
\text{L}_\C^\varepsilon(X,0) = \big\{ x\in X(\C) \text{ s.t. } \sum_{i=1}^n|z_i(x)|_\C^2 = \varepsilon \big\}
$
is a smooth manifold of real dimension $2d-1$. 
Its diffeomorphism type does not depend on the embedding nor on $\varepsilon$, provided that $\varepsilon$ is small enough, and we define the  link of $(X,0)$ to be this diffeomorphism type. 
Note that the topology of a neighborhood of $0$ in $X$ is completely determined by its link, since one can show that the intersection of $X$ with a small ball is homeomorphic to the cone over  $\text{L}_\C^\varepsilon(X,0)$. 
The complex structure on $X$ also induces a canonical  contact structure on the link which has attracted a lot of attention recently, see for example \cite{CaubelNemethiPopescu-Pampu2006,McLean2015}.

When the algebraic variety $X$ is defined over an  algebraically closed field $k$, then
a non-archimedean version of the link can be defined as follows.
Endow $k$ with the trivial absolute value $|\cdot|$, that is the one such that $|k^\times|=1$, and denote by $X^\mathrm{an}$ the non-archimedean analytic space associated with $X$, in the sense of Berkovich~\cite{berkovich:book}.
Then, the space $\NL^\varepsilon(X,0) = \big\{ x\in X^\mathrm{an} \text{ such that} \max_{i}|z_i(x)| = \varepsilon \big\}$, with the topology induced from the one of $X^\mathrm{an}$, does not depend on the embedding nor on $\varepsilon\in\left]0,1\right[$.
We will call it the \emph{non-archimedean link} of $(X,0)$ and we will simply denote it by $\NL(X,0)$.
Observe that, thanks to the non-archimedean triangular inequality, the equation $\max_{i}|z_i(x)| = \varepsilon$ defines the boundary of the ball of radius $\varepsilon$ in the non-archimedean analytification of $\C^n$, making the definition of the non-archimedean link completely analogous to the classical one.
Concretely, $\NL(X,0)$ is the set of semi-valuations $v$ on the complete local ring $\widehat{\mathcal O_{X,0}}$ of $X$ at $0$ that are normalized by the condition $\min_{f\in{\mathfrak M}}v(f)=1$, where $\mathfrak M$ is the maximal ideal of $\widehat{\mathcal O_{X,0}}$, endowed with the pointwise convergence topology.

The homotopy type of the non-archimedean link $\NL(X,0)$ is well understood in terms of the resolutions of singularities of $(X,0)$, whenever those exist.
Recall that any resolution of singularities $\pi\colon X_\pi\to X$ of $(X,0)$ whose exceptional divisor $\pi^{-1}(0)$ has simple normal crossing singularities gives rise to a \emph{dual simplicial complex} $\Delta_\pi$, which is a finite simplicial complex encoding the incidence relations between the components of $\pi^{-1}(0)$.
It follows from the work of Thuillier \cite{thuillier:geometrietoroidale} that $\Delta_\pi$ can be embedded in $\NL(X,0)$ and that there is a deformation retraction of the latter onto the former.
% ($k$ perfect, if we are not restricting to the algebraically closed case yet).
Since every connected finite simplicial complex is the dual complex of an isolated normal singularity by Koll\'ar \cite{kollar:links}, the homotopy type of $\NL(X,0)$ can be arbitrarily complicated. 
However, de Fernex--Koll\'ar--Xu \cite{defernex-kollar-xu:dualcomplex} have proved that $\Delta_\pi$ is contractible for isolated log terminal singularities.

On the other hand, the topology of non-archimedean links is poorly understood and has been analyzed in depth only in the case of surfaces. 
One can show that $\NL(\A^2_k,0)$ is a compact real tree, that is a union of segments which does not contain any non-trivial loop, see~\cite{berkovich:book,jonsson:berkovich,favre-jonsson:valtree}. 
Its structure is however quite intricate since it has a dense set of ramification points (corresponding to points of \emph{type 2} as in~\cite{berkovich:book}), and the set of branches at such a point
is naturally parameterized by $\P^1(k)$, which may be uncountable. When $k$ is a countable field,  $\NL(\A^2_k,0)$ is metrizable and homeomorphic to the  Wa\.{z}ewski universal dentrite by~\cite{HrushovskiLoeserPoonen2014}. 

The non-archimedean link of a surface singularity $(X,0)$ can be obtained by gluing copies of $\NL(\A^2_k,0)$ to a finite graph. This picture has enabled   
de Felipe \cite{deFelipe2017} to completely describe the homeomorphism types of non-archimedean links of surface singularities, but
her result shows that the topology of $\NL(X,0)$ fails to encode much information about the singularity. For example, $\NL(X,0)$ is homeomorphic to $\NL\big(\A^2_k,0\big)$ when $(X,0)$ is a rational singularity, or when $X$ admits a good resolution whose exceptional locus is irreducible. The topology of $\NL(X,0)$ thus forgets the function fields of exceptional components of a resolution. In order to characterize interesting classes of singularities one needs to retain some of this information.

%
%
%
%The presence of combinatorial subset capturing the whole homotopy, often called \emph{skeleton}, is a recurring feature of Berkovich spaces, see for example \cite{Berkovich1999, Berkovich2004, MustataNicaise2015, NicaiseXu2016, GublerRabinoffWerner2016}.

To do so, we consider the sheaf on $\NL(X,0)$ which is induced by the sheaf of analytic functions on $X^\mathrm{an}$.
The resulting ringed space was studied by the first author in \cite{fantini:normspaces}.
% with tools of non-archimedean analytic geometry
Note that 
%as observed in \cite[4.16]{fantini:normspaces}
we cannot expect $\NL(X,0)$ with this additional analytic structure to be isomorphic to a proper subspace of itself, as any such isomorphism would have to send the endpoints of $\NL(X,0)$ to endpoints, forcing it to be surjective.
In particular, already in the case of a smooth point in a surface the non-archimedean link is isomorphic to a proper subspace of itself only after removing finitely many endpoints (more precisely, finitely many points of \emph{type 1}, that are the endpoints corresponding to semi-valuations with nontrivial kernel).
Whenever such an isomorphism exists we will say that the non-archimedean link $\NL(X,0)$ is \emph{self-similar} (see the condition \ref{condition_prime} on page~\pageref{conditiondag}).

In this paper we show that the non-archimedean link $\NL(X,0)$ of a normal surface singularity $(X,0)$ is self-similar if and only if $(X,0)$ is a \emph{sandwiched singularity}.
Over the complex numbers, sandwiched singularities were defined by Spivakovisky in \cite{spivakovsky:sandsingdesingsurfNashtransf} as those normal surface singularities whose complex analytic germs dominate bimeromorphically a smooth germ; in \emph{loc. cit.} they play a crucial role in the proof of the desingularization of surfaces via Nash transformations.
Several authors have further contributed to the study of sandwiched singularities, for example their deformation theory has been investigated by T. de Jong and van Straten \cite{deJongvanStraten1998}, while their Milnor fibers have been described by N\'emethi and Popescu-Pampu \cite{NemethiPopescu-Pampu2010}.
In order to work over an algebraically closed field $k$ of arbitrary characteristic we will need to replace complex analytic germs with formal germs, that is we will work with complete local rings; a precise definition will be given in Section~\ref{section_preliminariessandwiched}.

Our main result is the following theorem which give several characterizations of sandwiched singularities.

\begin{letthm}\label{mainthm}
Let $(X,0)$ be a normal surface singularity.
The following are equivalent:
\begin{enumerate}[label=(\roman{enumi})]
\item \label{condition_sandwiched}
$(X,0)$ is sandwiched;

\item \label{condition_strongly_selfsim}
there exists a finite set $T$ of type 1 points of $\NL(X,0)$ such that every point of $\NL(X,0)$ that is not of type 2 has a basis of neighborhoods each isomorphic to $\NL(X,0)\setminus T$;

\item \label{condition_valtree}
there exists a finite set $T$ of type 1 points of $\NL(X,0)$ such that $\NL(X,0)\setminus T$ is isomorphic to an open subspace of $\NL(\A^2_k,0)$;

\item \label{condition_selfsim}
there exists a finite set $T$ of type 1 points of $\NL(X,0)$ such that every open subset of $\NL(X,0)$ contains an open subset isomorphic to $\NL(X,0) \setminus T$;

\item \label{condition_graph}
there exists a good resolution of $(X,0)$ whose weighted dual graph is self-similar;

\item \label{condition_kato}
there exists a proper birational morphism of algebraic $k$-surfaces $\pi\colon X'\to X$ which is not an isomorphism above $0$, together with a point $p\in \pi^{-1}(0)$ and an isomorphism of complete local rings $\widehat{\mathcal O_{X',p}} \cong \widehat{\mathcal O_{X,0}}$.
\end{enumerate}
\end{letthm}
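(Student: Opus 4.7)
The plan is to prove all six conditions equivalent by combining a direct equivalence \ref{condition_sandwiched} $\Leftrightarrow$ \ref{condition_graph} on the combinatorial side with a cycle of implications traversing the analytic and birational conditions. Spivakovsky's combinatorial characterization of sandwiched singularities as those whose weighted dual graph is realizable as the dual graph of some resolution of a plane curve germ is the natural tool here; in our terms, I expect this realizability to be precisely the self-similarity of the weighted dual graph, because the weighted dual graph of the tower of iterated blow-ups of $\A^2_k$ above the origin is itself strongly self-similar---at every smooth point of every exceptional divisor one finds a formal germ isomorphic to $(\A^2_k,0)$, whose own tower of blow-ups reproduces the universal one.

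For the cycle, a natural route is \ref{condition_sandwiched} $\Rightarrow$ \ref{condition_valtree} $\Rightarrow$ \ref{condition_strongly_selfsim} $\Rightarrow$ \ref{condition_selfsim} $\Rightarrow$ \ref{condition_kato} $\Rightarrow$ \ref{condition_sandwiched}. The first implication exploits that a sandwich morphism $\phi\colon X \to \A^2_k$ is an isomorphism off the finite indeterminacy locus of its inverse, hence induces an isomorphism of Berkovich links outside finitely many type-1 points. For \ref{condition_valtree} $\Rightarrow$ \ref{condition_strongly_selfsim} one transports the well-established structural self-similarity of $\NL(\A^2_k,0)$ at its type-2 points---where tangent directions are parameterized by $\P^1_k$ and each supports a copy of $\NL(\A^2_k,0)$---to the open subspace $\NL(X,0)\setminus T$ provided by \ref{condition_valtree}. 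The implication \ref{condition_strongly_selfsim} $\Rightarrow$ \ref{condition_selfsim} is a direct density argument, since every open subset of $\NL(X,0)$ meets the set of non-type-2 points.

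The remaining two implications carry the main technical weight. For \ref{condition_selfsim} $\Rightarrow$ \ref{condition_kato}, the idea is to apply the self-similarity to a sufficiently small neighborhood of a divisorial valuation $v$ on $(X,0)$; such a neighborhood is canonically identified with the analytification of the formal germ $(X',p)$ for some modification $\pi\colon X'\to X$ realizing $v$ as an exceptional divisor and some point $p$ on it, and the embedded copy of $\NL(X,0)\setminus T$ then translates, via the equivalence between $\NL$-isomorphisms and completed-local-ring isomorphisms, into the required $\widehat{\mathcal O_{X',p}} \cong \widehat{\mathcal O_{X,0}}$. For \ref{condition_kato} $\Rightarrow$ \ref{condition_sandwiched}, one iterates the formal self-identification of \ref{condition_kato} to build a tower of proper birational morphisms $\cdots\to X_2 \to X_1\to X_0 = X$ with formally-marked points $p_n\in X_n$, and then extracts from the combinatorial structure of this infinite tower and the geometry of the smooth loci on its exceptional divisors a birational map realizing $(X,0)$ as dominating a smooth germ. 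The main obstacle is expected to be this last implication, which requires turning the self-referential formal data of \ref{condition_kato} into an explicit sandwich morphism; it is precisely here that the interaction between the analytic self-similarity and the birational geometry must be controlled with care.
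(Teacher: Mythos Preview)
Your overall architecture has two genuine gaps, both at the points you yourself flag as carrying the weight.

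First, the step \ref{condition_selfsim} $\Rightarrow$ \ref{condition_kato} does not work as stated. You want to take an open $U\subset \NL(X,0)$ with $U\cong \NL(X,0)\setminus T$ and read off an isomorphism $\widehat{\mathcal O_{X',p}}\cong\widehat{\mathcal O_{X,0}}$. But the dictionary between complete local rings and non-archimedean links (Proposition~\ref{proposition_propertiesNL_1} together with Proposition~\ref{proposition_propertiesNL} and Lemma~\ref{lemma_extension_to_type1}) only yields such an isomorphism when $U$ equals a full fiber $\cent_{X'}^{-1}(p)$ of a center map, not when $U$ is merely \emph{some} open subset of a small neighborhood. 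Arranging this requires controlling the boundary $\partial U$: one must show $\partial U$ consists of finitely many type~2 points (this is the content of Proposition~\ref{lem:keyboundary}, an extension result for morphisms of a punctured disc) and then construct a formal modification whose divisorial set is exactly a chosen set containing $\partial U$ (Theorem~\ref{theorem_existence_modifications}). The paper in fact does not go directly to \ref{condition_kato} here; it passes instead from \ref{condition_selfsim} through the auxiliary condition~\ref{condition_prime} to \ref{condition_graph}, and handles \ref{condition_kato} separately via \ref{condition_sandwiched} $\Rightarrow$ \ref{condition_kato} $\Rightarrow$ \ref{condition_prime}.

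Second, your proposed direct equivalence \ref{condition_sandwiched} $\Leftrightarrow$ \ref{condition_graph} conflates two distinct notions. In the paper, a weighted graph is \emph{sandwiched} if it embeds in a modification of the trivial graph (this is the Spivakovsky-type condition you invoke), whereas it is \emph{self-similar} if some nontrivial modification of it contains an isomorphic copy of it (Definition~\ref{def:graph-ss}). These are equivalent, but the implication self-similar $\Rightarrow$ sandwiched is a nontrivial purely combinatorial theorem (Theorem~\ref{thm:graph-sdw}) whose proof occupies most of Section~\ref{section_graphs}; it is not immediate from Spivakovsky's work. Once that is in hand, the implication \ref{condition_graph} $\Rightarrow$ \ref{condition_sandwiched} still needs a formal-geometric plumbing argument (Theorem~\ref{thm:extend-spiv}) extending Spivakovsky's complex-analytic proof to arbitrary characteristic. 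Your sketch of \ref{condition_kato} $\Rightarrow$ \ref{condition_sandwiched} by iterating the tower is therefore not needed---and just as well, since the plan there is too vague to assess.

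A smaller point: your direction \ref{condition_valtree} $\Rightarrow$ \ref{condition_strongly_selfsim} is the harder of the two, and your sketch (transporting self-similarity of $\NL(\A^2_k,0)$ at type~2 points) does not obviously produce neighborhoods of a given non-type-2 point isomorphic to $\NL(X,0)\setminus T$ rather than to some open of $\NL(\A^2_k,0)$. The paper proves instead \ref{condition_sandwiched} $\Rightarrow$ \ref{condition_strongly_selfsim} directly via a strengthened form of \ref{condition_valtree} (Lemma~\ref{lem:structure sandw}) in which the open image in $\NL(\A^2_k,0)$ can be made to contain any prescribed point, and then deduces \ref{condition_strongly_selfsim} $\Rightarrow$ \ref{condition_valtree} trivially.
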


Both \ref{condition_strongly_selfsim} and \ref{condition_selfsim} can be interpreted as self-similarity properties for $\NL(X,0)$, and imply the condition~\ref{condition_prime}.
In \ref{condition_graph}, a vertex of a dual graph has as weights the genus and the self intersection of the corresponding component; such a weighted graph is said to be \emph{self-similar} if it is isomorphic to a graph modification of itself, see Section~\ref{section_graphs} for more details.
A datum like the one of \ref{condition_kato} will be called a \emph{Kato datum} for $(X,0)$.

Let us now illustrate the main ingredients of the proof of Theorem~\ref{mainthm}, which requires a combination of methods from resolution of singularities, non-archimedean analytic geometry, formal geometry, and combinatorics.

Begin by observing that any sandwiched singularity can be obtained by performing a composition of point blowups $(Y,D) \to (\A^2_k,0)$, followed by the contraction of a connected divisor $E$ on $Y$ that is supported on $D$.
This procedure yields two maps $Y\to X \to \A^2_k$ that shows that the contracted surface $X$, whose singular point $0$ is the image of $E$ through the contraction map, is ``sandwiched'' between two smooth surfaces, justifying the terminology.
%If $y$ is any point of $E$, since $Y$ is smooth then $y$ has an (\'etale) neighborhood isomorphic to an (\'etale) neighborhood of $0$ in $\A^2_k$.
By picking any point $y$ in $E$ and performing on it the same sequence of blowups and contraction (see the subsection~\ref{sec:164} for a detailed explanation of how this can be done), one obtains a surface $X'$ above $Y$ with a singular point $p$ such that $\widehat{\mathcal O_{X',p}} \cong \widehat{\mathcal O_{X,0}}$, that is a Kato datum. This proves the implication  \ref{condition_sandwiched}  $\implies$ \ref{condition_kato}.

In the sections \ref{section_preliminarieslink} and \ref{section_links_surfaces} we study in detail the structure of non-archimedean links.
In particular, with any modification $(Y',D')$ of $(X,0)$ is associated a \emph{center map} $\cent_{Y'}\colon \NL(X,0)\to D'$, and if $y'$ is a closed point of $D'$ then its inverse image $\cent_{Y'}^{-1}(y')$ is an open subspace of $\NL(X,0)$ that is isomorphic to the complement of finitely many points of type 1 in the non-archimedean link $\NL(Y',y')$ of $y'$ in $Y'$ (see Proposition~\ref{proposition_propertiesNL}).
When applied to the Kato datum $X'\to X$ above, this shows that $\NL(X,0)$ contains a strict open subspace that is isomorphic to the complement of finitely many points of type 1 in $\NL(X,0)$ itself, since $\widehat{\mathcal O_{X',p}} \cong \widehat{\mathcal O_{X,0}}$ implies that $\NL(X',p)$ and $\NL(X,0)$ are isomorphic, that is the condition~\ref{condition_prime} holds. 
Similar arguments based on the study of the structure of non-archimedean links permit to obtain the implications \ref{condition_sandwiched} $\implies$ \ref{condition_valtree}   $\implies$ \ref{condition_strongly_selfsim} $\implies$ \ref{condition_selfsim} of Theorem~\ref{mainthm}, as is explained in Section~\ref{section_preliminariessandwiched}.

Showing that singularities having self-similar links have also self-similar weighted dual graphs, that is the implication \ref{condition_prime} $\implies$ \ref{condition_graph} of Theorem~\ref{mainthm}, is a slightly more delicate matter, undertaken in Section~\ref{section_proof_main}.
We prove an extension result for morphisms of a punctured disc into $\NL(X,0)$, Proposition~\ref{lem:keyboundary}, and deduce that we can assume that the boundary $\partial U$ of an open subset $U$ of $\NL(X,0)$ isomorphic to the complement of finitely many points of type 1 in $\NL(X,0)$ consists only of finitely many points of type 2.
We then use other results on the structure of $\NL(X,0)$, proven in subsection \ref{subsection_existencemodifications}, to produce a (formal) modification $(Y',D')$ of $(X,0)$ together with a closed point $y'$ of $D'$ such that $U\cong \cent_{Y'}^{-1}(y)$.
Finally, we show that the dual graph associated with $(X,0)$ is self-similar by carefully choosing compatible resolutions of $(Y',y)$ and $(X,0)$.

The proof of the remaining implication, that is \ref{condition_graph} $\implies$ \ref{condition_sandwiched}, requires two distinct steps that we believe to be of independent interest.
First of all, we prove a purely combinatorial result, Theorem~\ref{thm:graph-sdw}, showing that every self-similar weighted graph is \emph{sandwiched}, which means that it can be embedded in a graph modification of the trivial graph (the dual graph of the blowup of $\A^2_k$ at $0$).
We then show in Theorem~\ref{thm:extend-spiv} that, if $(X,0)$ is a normal surface singularity admitting a good resolution whose associated weighted dual graph is sandwiched, then $(X,0)$ is a sandwiched singularity.
Over the complex numbers this result was originally proven by Spivakovsky in \cite{spivakovsky:sandsingdesingsurfNashtransf}, but his proof relies on plumbing techniques for complex analytic spaces; we proceed in a similar way, using an analogue of plumbing in formal geometry.

\medskip
\begin{center}
$\diamond$
\end{center}

\medskip

Since sandwiched singularities can be characterized in terms of their non-archimedean links, it is also natural to look for a characterization of them in terms of their archimedean links. 
We have not been able to find a self-similar property reminiscent of Theorem~\ref{mainthm}, \ref{condition_strongly_selfsim} or \ref{condition_selfsim}. 
However, building on Theorem~\ref{mainthm}, \ref{condition_kato}, in Section~\ref{section_complexanalytic} we observe that links of sandwiched singularities are exactly those arising on a specific class of smooth compact complex surfaces.

To state precisely our results, we need to introduce some terminology.  A compact complex surface $S$ contains \emph{a global spherical shell} if it admits a biholomorphic copy of a neighborhood of the $3$-sphere in $\C^2$ that does not disconnect $S$. Surfaces containing a global spherical shell have been completely described by Kato~\cite{kato:cptcplxmanifoldsGSS} (see also the subsequent work of G.~Dloussky~\cite{dloussky:phdthesis}).
They are non-k\"ahler compact surfaces of Kodaira dimension equal to $-\infty$ that play a special role in the Kodaira classification of compact complex  surfaces, see the introduction of \cite{MR2726099}.
Primary Hopf surfaces are the most emblematic examples of such surfaces: they are obtained as the orbit space of a contracting germ of biholomorphism of $(\C^2,0)$, and are diffeomorphic to the product of spheres $\mathbb S^3\times \mathbb S^1$. Any surface containing a global spherical shell is a deformation of a modification of a primary Hopf surface.

\begin{letthm}\label{thm3}
Let $(X,0)$ be a complex sandwiched %surface
singularity, and choose a local embedding $X \subset \C^n$.  
Then, for any $\eps$ small enough, there exist a smooth compact complex surface $S$ having a global spherical shell and a holomorphic embedding 
\[
\imath : X\cap \{z\in \C^n,\, \eps/2 < \|z\| < 2 \eps\} \longrightarrow S
\]
such that $S \setminus \LCep(X,0)$ is connected. 
\end{letthm}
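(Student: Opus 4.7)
The plan is to use the sandwiched structure of $(X,0)$ to realize the Kato datum provided by Theorem~\ref{mainthm}, \ref{condition_kato} upstairs on a smooth model, and then to perform Kato's construction~\cite{kato:cptcplxmanifoldsGSS} of a surface with a global spherical shell on that model, in such a way that the resulting $S$ automatically contains the archimedean link of $(X,0)$.

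First I would fix a sandwiched resolution of $(X,0)$: a composition of point blowups $\nu\colon Y\to\C^2$ together with a birational contraction $\mu\colon Y\to X$ of a connected divisor $E\subset\nu^{-1}(0)$ to $0$. Following the explicit realization of the Kato datum described in subsection~\ref{sec:164}, I pick a smooth point $y\in E$ and perform above $y$ the same sequence of blowups that yields $\nu$, producing a modification $\nu'\colon Y'\to Y\to\C^2$ whose exceptional configuration above $y$ is a biholomorphic copy $E_y\subset Y'$ of $E$. I then apply Kato's construction to the pair $(\nu',y')$ for a generic smooth point $y'\in E_y$: I fix a small closed bidisc $\ol\Sigma\subset Y'$ centered at $y'$, together with a closed ball $\ol B\subset\C^2$ around $0$ chosen so that $\ol\Sigma\subset\nu'^{-1}(\Int B)$, and form the smooth compact complex surface
\[
S \;=\; \bigl(\,\nu'^{-1}(\ol B)\setminus \Int\Sigma\,\bigr)\big/\sim,
\]
where $\sim$ identifies $\partial\nu'^{-1}(\ol B)\cong\partial B$ with $\partial\Sigma$ via a biholomorphism of the two $3$-spheres extending to biholomorphisms of collar neighborhoods. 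A neighborhood of the glued locus is by construction biholomorphic to a neighborhood of $S^3\subset\C^2$, so $S$ contains a global spherical shell, and this shell does not disconnect $S$ since $S$ minus the shell is the connected open manifold $\nu'^{-1}(\Int B)\setminus\ol\Sigma$.

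I would then locate the archimedean link inside $S$. For $\eps>0$ sufficiently small, the annulus $X\cap\{\eps/2<\|z\|<2\eps\}$ lies in $X\setminus\{0\}$ and thus lifts under the inverse of $\mu$ (a biholomorphism over $X\setminus\{0\}$), and then under the inverse of $Y'\to Y$, to an open complex submanifold of $\nu'^{-1}(\Int B)$, provided $\ol B$ is large enough to engulf its image under $\nu'$. Choosing $\ol\Sigma$ small enough around $y'\in E_y$ guarantees that this lifted annulus is disjoint from $\ol\Sigma$, and composing with the quotient map to $S$ produces the required holomorphic embedding $\imath$. Finally, the image $\imath\bigl(\LCep(X,0)\bigr)$ separates $\nu'^{-1}(\Int B)\setminus\ol\Sigma$ into an inner piece, containing the total exceptional locus of $\mu\circ(Y'\to Y)$ with $\ol\Sigma$ removed, and an outer piece adjacent to $\partial\nu'^{-1}(\ol B)$. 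Since the Kato gluing identifies $\partial\nu'^{-1}(\ol B)$ with $\partial\Sigma$, which lies in the frontier of the inner piece, the two pieces become connected through the spherical shell in $S$; hence $S\setminus\imath\bigl(\LCep(X,0)\bigr)$ is connected.

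The main obstacle is the simultaneous calibration of the parameters $\eps$, the radius of $\ol B\subset\C^2$, and the size of $\ol\Sigma\subset Y'$: the sphere $\partial B_\eps\subset\C^n$ must intersect $X$ transversally at scale $\eps$; $\ol B$ must be large enough that the lifted annulus sits inside $\nu'^{-1}(\Int B)$ yet small enough to serve as the starting ball of the Kato construction; and $\ol\Sigma$ must be small enough to avoid the lifted annulus while contained in $\nu'^{-1}(\Int B)$. These conditions can all be arranged because the lifted link stays uniformly away from the exceptional locus of $\mu\circ(Y'\to Y)$, whereas $\ol\Sigma$ is concentrated near $y'\in E_y$. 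Once the choices are made, the remainder of the argument is Kato's classical construction; the content of the theorem really lies in the observation that the Kato datum furnished by Theorem~\ref{mainthm}, \ref{condition_kato} is precisely what permits setting up this construction so that the prescribed annulus in $X$ embeds in $S$ in the stated way.
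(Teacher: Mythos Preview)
Your approach is correct and in fact somewhat more direct than the paper's. The paper proceeds by first constructing a \emph{singular} analogue of a Kato surface: it glues a copy of $X$ into the resolution $Y'$ to obtain a surface $X'$ with a single singular point isomorphic to $(X,0)$, builds a compact surface $\tilde S$ from $X'$ by a Kato-type identification, and only afterwards shows that $\tilde S$ has a global spherical shell, either by invoking \cite[Proposition~2]{kato:cpctcplxsurfwithGSPH} or by exhibiting an explicit biholomorphism $\tilde S\cong S(\pi,\sigma)$ with a standard Kato surface. You instead stay on the smooth resolution throughout, form the Kato surface $S(\nu',\sigma)$ directly (so the global spherical shell is built in), and then embed the link via $\mu^{-1}$; this avoids the appeal to Kato's proposition and the auxiliary biholomorphism.

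One simplification is available: the second round of blowups (producing the copy $E_y$ and the point $y'$) is unnecessary. You may apply Kato's construction directly to $(\nu,\sigma)$ with $\Sigma$ a small bidisc around any smooth point $y\in E\subset Y$; the lifted link is already a tube around $E$, so $\partial\Sigma$ lies in the inner piece and your connectedness argument goes through unchanged. In other words, the sandwiched presentation alone suffices, and the Kato datum from Theorem~\ref{mainthm}\ref{condition_kato} plays no essential role here. With that simplification your construction coincides with the regular Kato datum $(\pi,\sigma)$ that the paper eventually identifies $\tilde S$ with.
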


Observe that the link $\LCep(X,0)$ is included in the domain $X\cap \{z\in \C^n,\, \eps/2 < \|z\| < 2 \eps\}$ so that  one can phrase the property in the previous theorem as an embedding property for the archimedean link of a sandwiched singularity.  Following the terminology of M.~Kato, this says that 
$\LCep(X,0)$ can be realized as a real-analytic global strongly pseudoconvex $3$-fold 
in a surface containing a global spherical shell, see Section~\ref{section_complexanalytic} for a discussion of these notions.

We prove that this property characterizes sandwiched singularities.

\begin{letthm}\label{thm2}
Let $(X,0)$ be a complex  normal surface singularity, and choose a local embedding $X \subset \C^n$. 
Suppose that for some small enough $\eps>0$, the archimedean link $\LCep (X,0)$ can be realized as a real-analytic global strongly pseudoconvex $3$-fold in a compact complex surface $S$.
Then we are in exactly one of the following situations:
\begin{enumerate}[label=(\roman{enumi})]
\item\label{item:thm2a}
$(X,0)$ is a weighted homogeneous singularity which is not rational, and $S$ is an elliptic surface of Kodaira dimension either $0$ or $1$;
\item\label{item:thm2b}
$(X,0)$ is a quotient singularity, and $S$ is a secondary Hopf surface; 
\item\label{item:thm2c}
$(X,0)$ is a sandwiched singularity, and $S$ carries a global spherical shell.
\end{enumerate}
\end{letthm}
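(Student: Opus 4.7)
The plan is to apply the Enriques--Kodaira classification to the surface $S$, distinguishing two main cases according to whether the embedded link $L := \LCep(X,0)$ separates $S$ or not.

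If $L$ does not separate $S$, we are in the situation of~\ref{item:thm2c}. Indeed, the strongly pseudoconvex side of $L$ is biholomorphic to a punctured neighborhood of $0$ in $X$, and the connectivity of $S \setminus L$ lets us travel along a loop in $S$ crossing $L$ transversally once; this produces two distinct holomorphic embeddings into $S$ of a neighborhood of $0 \in X$. Chaining one embedding with the inverse of the other, and suitably contracting if necessary, produces a proper birational morphism $\pi \colon X' \to X$, not an isomorphism above $0$, together with a point $p\in \pi^{-1}(0)$ with $\widehat{\cO_{X',p}} \cong \widehat{\cO_{X,0}}$. This is a Kato datum, so Theorem~\ref{mainthm}~\ref{condition_kato} gives that $(X,0)$ is sandwiched; by construction $S$ then carries a global spherical shell.

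If $L$ separates $S$ into $U^+ \sqcup U^-$ with $U^+$ pseudoconvex, then $U^-$ is a compact complex surface with strongly pseudoconcave boundary. I analyze $S$ via the Enriques--Kodaira classification. The case $\mathrm{kod}(S) = 2$ is ruled out because the normality of $(X,0)$ combined with strong pseudoconvexity forces the plurigenera of $U^-$ to be incompatible with those of a surface of general type. When $\mathrm{kod}(S) \in \{0,1\}$, the minimal model of $S$ admits an elliptic fibration whose restriction to $L$ must be a Seifert fibration; by Neumann's theorem, $(X,0)$ is then weighted homogeneous, and the positivity of the plurigenera of $S$ forces $(X,0)$ to be non-rational, yielding~\ref{item:thm2a}. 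When $\mathrm{kod}(S) = -\infty$, the rational and ruled cases are excluded since a ruled structure is incompatible with a separating pseudoconvex $3$-fold arising from a normal singularity; we are left with class~VII surfaces. Among these, non-Hopf Kato surfaces carry a global spherical shell and hence a non-separating link, while Inoue surfaces contain no curves at all (making the contraction of $U^+$ impossible). Thus only primary and secondary Hopf surfaces remain; since their universal cover is $\C^2 \setminus \{0\}$, lifting $L$ produces a concentric family of real-analytic pseudoconvex $3$-folds around $0$, identifying $(X,0)$ as a quotient singularity~\ref{item:thm2b}.

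The main obstacle is the class~VII analysis: while Hopf and Inoue surfaces are classified, the existence of class~VII surfaces without global spherical shell and with positive $b_2$ is still partly conjectural, and one needs a careful argument exploiting the separating property of $L$ and the normality of $(X,0)$ to avoid relying on the full GSS conjecture. Converting the topological loop in the non-separating case into a genuine complex-analytic Kato datum is another delicate point, requiring one to translate the archimedean monodromy information into birational data for $(X,0)$. Finally, one must verify that the three cases are mutually exclusive by comparing the link topologies---Seifert-fibered with non-spherical base, spherical space form, and genuinely graph-manifold---on each side.
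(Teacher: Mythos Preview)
Your proposal rests on a case split that is empty on one side and wrong on the other. By definition (see \S\ref{sec:pdcvx3folds}), a \emph{global} strongly pseudoconvex $3$-fold $\Sigma\subset S$ is one for which $S\setminus\Sigma$ is connected. Thus the hypothesis of the theorem already says that $L$ does \emph{not} separate $S$; your entire ``separating'' branch, with its Enriques--Kodaira analysis, never occurs. Conversely, your claim that the non-separating case always lands in~\ref{item:thm2c} is false: the link of a quotient singularity embeds as a global (non-separating) $3$-fold in a secondary Hopf surface, which carries no global spherical shell, and similarly the links in case~\ref{item:thm2a} sit globally in elliptic surfaces. So the non-separating case must account for all three outcomes, not just~\ref{item:thm2c}.

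The paper's argument proceeds quite differently. One cuts $S$ along $\imath(L)$ and reglues a copy of $X_{\eps_+}$ on the pseudoconvex side, obtaining a surface $X'$ with a single singular point $0'$ analytically isomorphic to $(X,0)$. The remaining annular end is then filled by Hartogs, producing a proper bimeromorphic map $\pi\colon X'\to X_\eps$. The trichotomy comes from the local behaviour of $\pi$ at $0'$: if $\pi(0')\neq 0$, or if $\pi(0')=0$ but $\pi$ is not a local isomorphism there, one obtains a genuine Kato datum and hence~\ref{item:thm2c} via Theorem~\ref{mainthm}. If instead $\pi$ is a local biholomorphism at $0'$, then $f=\sigma\circ\pi$ is a \emph{contracting automorphism} of $(X',0')$, and the classification of such germs in~\cite{MR3270424} yields precisely cases~\ref{item:thm2a} and~\ref{item:thm2b} (with the primary-Hopf subcase falling back into~\ref{item:thm2c}). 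No direct appeal to the Enriques--Kodaira classification of $S$ is made, and in particular the argument does not touch the GSS conjecture, which you correctly flagged as an obstacle in your approach.
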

An elliptic surface is a compact complex surface carrying an elliptic fibration~\cite[p.200]{barth-hulek-peters-vanderven:compactcomplexsurfaces}. A 
Hopf surface is a compact complex surface whose universal cover is biholomorphic  to $\C^2\setminus\{0\}$. When its fundamental group is cyclic, then it is a primary Hopf surface in the previous sense. 
Otherwise it admits a non-trivial finite cyclic cover by a primary Hopf surface, in which case it is called secondary Hopf surface.

A secondary Hopf surface is never elliptic and does not contain any global spherical shell, so that the three classes of surfaces arising in Theorem~\ref{thm2} are really disjoint.

%XXX
%Recall that we assumed that $S$ is a Kato surface, that is a modification of a surface containing a global spherical shell with $b_2\ge1$. It follows from~\cite[p.684]{kodaira1} that $S$ has no global meromorphic function, hence contains only finitely many compact curves by \cite[Theorem 5.1]{kodaira2}, so that $S$ carries no fibration. It cannot be a Hopf surface either, which gives the required contradiction.
%XXX

Observe that Theorems~\ref{thm3} and~\ref{thm2} imply immediately the following result.

\begin{letcor}\label{cor2}
A normal surface singularity is sandwiched if and only if its archimedean link can be realized as a 
real-analytic global strongly pseudoconvex $3$-fold in a compact complex surface containing a global spherical shell.
\end{letcor}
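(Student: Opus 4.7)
The plan is to combine Theorems~\ref{thm3} and~\ref{thm2} directly, as the authors indicate, so the proof is essentially a two-line deduction with one small compatibility check about the terminology of Kato's realization.

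For the forward implication, I would start with a complex sandwiched normal surface singularity $(X,0)$ and a local embedding $X \subset \C^n$. Theorem~\ref{thm3} then produces, for every sufficiently small $\eps > 0$, a smooth compact complex surface $S$ containing a global spherical shell together with a holomorphic embedding
\[
\imath : X \cap \{z \in \C^n \,:\, \eps/2 < \|z\| < 2\eps\} \longrightarrow S
\]
such that $S \setminus \LCep(X,0)$ is connected. Since $\imath$ is holomorphic and the sphere $\{\sum_i |z_i|^2 = \eps\}$ cuts $X$ in a real-analytic strongly pseudoconvex hypersurface, its image $\imath(\LCep(X,0))$ is a real-analytic strongly pseudoconvex $3$-fold in $S$; the fact that $S \setminus \imath(\LCep(X,0))$ is connected is precisely what makes this realization \emph{global} in Kato's sense, as recalled in Section~\ref{section_complexanalytic}.

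For the reverse implication, I would assume that $\LCep(X,0)$ realizes as a real-analytic global strongly pseudoconvex $3$-fold in some compact complex surface $S$ that contains a global spherical shell. Applying Theorem~\ref{thm2} to this situation leaves three alternatives. The comment immediately following Theorem~\ref{thm2} records that neither an elliptic surface of Kodaira dimension $0$ or $1$ nor a secondary Hopf surface can contain a global spherical shell, so alternatives \ref{item:thm2a} and \ref{item:thm2b} are excluded by our hypothesis on $S$. Only alternative \ref{item:thm2c} remains, and it asserts precisely that $(X,0)$ is sandwiched.

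The only real obstacle here is the bookkeeping to match the two terminologies: one must check that the embedding delivered by Theorem~\ref{thm3} satisfies the hypotheses required to feed Theorem~\ref{thm2}, and conversely that the hypothesis of the corollary supplies a realization in the sense of Theorem~\ref{thm2}. Both of these are handled in Section~\ref{section_complexanalytic}, so the corollary follows with no additional work.
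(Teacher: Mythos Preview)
Your proof is correct and follows exactly the approach the paper intends: the authors state that the corollary is an immediate consequence of Theorems~\ref{thm3} and~\ref{thm2}, and your argument simply spells out this deduction. The only minor point is that the disjointness of case~\ref{item:thm2a} from case~\ref{item:thm2c} is not literally in the sentence you cite (which only treats secondary Hopf surfaces), but it follows from the earlier remark that surfaces with a global spherical shell have Kodaira dimension $-\infty$.
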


\noindent {\bf Acknowledgements.} We would like to warmly thank B. Teissier, who asked us about a characterization of singularities having self-similar Riemann-Zariski spaces. 
This paper is a tentative answer to his question in the framework of normalized Berkovich analytic spaces.

\section{Preliminaries on non-archimedean links}\label{section_preliminarieslink}

In this section we recall the construction of the non-archimedean link $\NL(X,Z)$ of a subscheme $Z$ in an algebraic variety $X$ from \cite{fantini:normspaces} (where it is called normalized non-archimedean link).

\subsection{Berkovich analytifications}
We begin by recalling the definition of the Berkovich analytification of an algebraic variety, following \cite{berkovich:book}.
Let $K$ be a field complete with respect to a non-archimedean absolute value $|\cdot|$, that is an absolute value such that $|a+b|\leq \max\{|a|,|b|\}$ for every $a$ and $b$ in $K$.
We denote by $K^\circ  =\{ |a| \le 1\}$ the valuation ring of $K$. 
In the sequel $K$ will either be the field of Laurent series $k((t))$ with a $t$-adic absolute value for some algebraically closed field $k$, 
or any field endowed with the trivial absolute value, that is the absolute value such that $|K^\times|=1$.

Let $X=\Spec(A)$ be an affine algebraic variety over $K$. 
The \emph{analytification} $X^\mathrm{an}$ of $X$ is defined as the following set of multiplicative semi-norms:
\[
X^{\mathrm{an}}=\Big\{x\colon A\to \R_{\geq0} \,\big|\, x(ab)=x(a)x(b), x(a+b)\leq x(a)+x(b), x(c)=|c| \, \forall a,b\in A, c\in K\Big\},
\]
with the topology of the pointwise convergence, that is the topology induced by the product topology of $(\R_{\geq0})^A$.
The definition extends by gluing to any algebraic variety over $K$ (and more generally to any $K$-scheme of finite type).

If $x$ is a point of $X^\mathrm{an}$ then its kernel $\ker x$ is a prime ideal of $A$ and $x$ induces an absolute value on the quotient $A/\ker x$, which is the residue field of $X$ at $\ker x$. The completion of $A/\ker x$ with respect to this absolute value is a complete valued field extension of $K$ which will be denoted by $\mathscr H(x)$ and called the \emph{complete residue field} of $X^\mathrm{an}$ at $x$.
If $f$ is an algebraic function on $X$, we will denote by $f(x)$ its image in $\mathscr H(x)$, and by $|f(x)|\in\R_{\geq0}$ the image of $f(x)$ through the absolute value of $\mathscr H(x)$.

More generally, $X^\mathrm{an}$ comes equipped with a sheaf of $K$-algebras of \emph{analytic functions}, consisting of the functions %taking values in the completed residue fields of $X^\mathrm{an}$ 
which can be written locally as a uniform limit of rational functions without poles.
If $U$ is an open subspace of $X^\mathrm{an}$, then an analytic function $f\in\mathcal O_{X^\mathrm{an}}(U)$ can be evaluated in a point $x$ of $U$, yielding an element $f(x)\in \mathscr H(x)$, and therefore a positive real number $|f(x)|$.
An analytic funtion $f$ on $U$ is said to be \emph{bounded} if $|f(x)|\leq1$ for every $x$ in $U$.
Bounded analytic functions form a subsheaf $\mathcal O^\circ_{X^\mathrm{an}}$ of $K^\circ$-modules  of $\mathcal O_{X^\mathrm{an}}$.

Moreover, $X^\mathrm{an}$ can be endowed with an additional Grothendieck topology.
We will not discuss this last aspect in the rest of the paper since we will only be considering open subspaces of analytic spaces.

\subsection{The center map}
 For the remaining of this section we work with an algebraic variety $X$ over a trivially valued and algebraically closed field $k$.
 
% \smallskip
 
Any point $x$ of $X^\mathrm{an}$ comes together with a morphism $\alpha\colon \Spec\big(\mathscr H (x)\big)\to X$.
We say that $x$ \emph{has center} on $X$ if $\alpha$ extends to a morphism $\overline{\alpha} \colon \Spec\big(\mathscr H (x)^\circ\big)\to X$, where $\mathscr H (x)^\circ$ is the valuation ring of $\mathscr H (x)$.
The \emph{center} of $x$ is then defined as the point $\mathrm{c}_X(x)=\overline{\alpha}(s)$ of $X$, where $s$ is the closed point of $\Spec\big(\mathscr H (x)^\circ\big)$.
This coincides with the notion, classical in valuation theory, of the center of the valuation ring $\mathscr H(x)^\circ$.
Observe that, since $X$ is separated by hypothesis, whenever $x$ has center on $X$ then its center $\mathrm{c}_X(x)$ is a well defined point of $X$.
If moreover $X$ is proper over $k$, then by the valuative criterion of properness every point of $X^\mathrm{an}$ has center on $X$, but this is not true in general.

The \emph{center map} $\mathrm c_X\colon \big\{x\in X^\mathrm{an} \text{ having center on } X\}\to X$ is anticontinuous, which means that $\mathrm c_X^{-1}(Z)$ is open whenever $Z$ is a closed subvariety of $X$.
% indeed, if $X=\Spec A$ is affine $Z$ is defined by an ideal $I$ of $A$, then $\mathrm{c}_X^{-1}(Z)$ is defined by the inequalities $|f|<1$ for $f$ in $I$.
The notion of center describes the property for a point of $X^\mathrm{an}$ to be close to a point of $X$, so whenever $Z$ is a closed subvariety of $X$ the subset $\mathrm c_X^{-1}(Z)$ of $X^\mathrm{an}$ can be thought of as a tubular neighborhood of $Z$ in $X^\mathrm{an}$.
The complement $\mathrm c_X^{-1}(Z)\setminus Z^\mathrm{an}$ of $Z^\mathrm{an}$ in the tubular neighborhood can then be thought of as a \emph{punctured tubular neighborhood} of $Z$ in $X^\mathrm{an}$.

% As partial evidence backing up this claim, observe that $\mathrm c_X^{-1}(Z)$ only depends on the formal completion $\widehat{X/Z}$ of $X$ along $Z$.
% The complement $\mathrm c_X^{-1}(Z)\setminus Z^\mathrm{an}$ of $Z^\mathrm{an}\cap\mathrm c_X^{-1}(Z)$ in $\mathrm c_X^{-1}(Z)$ can then be thought of as a punctured tubular neighborhood of $Z^\mathrm{an}\cap\mathrm c_X^{-1}(Z)$ in $X^\mathrm{an}$.
%
\begin{rmk}\label{rmk:thu}
In the terminology of \cite{thuillier:geometrietoroidale}, the punctured tubular neighborhood $\mathrm c_X^{-1}(Z)\setminus Z^\mathrm{an}$ coincides with the analytic space $\big(\widehat{X/Z}\big)_\eta$ associated with the formal completion $\widehat{X/Z}$ of $X$ along $Z$.
% As such, it only depends on the formal $k$-scheme $\widehat{X/Z}$.
\end{rmk}

\subsection{Non-archimedean links}
We fix an algebraic variety $X$ over $k$, and a nonempty and nowhere dense closed subscheme $Z$ of $X$. 

\smallskip

An element $\lambda$ of $\R_{>0}$ acts on a point $x$ of $X^\mathrm{an}$ by raising the semi-norm $x$ to the power $\lambda$.
Indeed, the condition for $x^\lambda$ to be in $X^\mathrm{an}$, that is the fact that it is trivial on $k$, is satisfied since the trivial absolute value is invariant under exponentiation by elements of $\R_{>0}$.
Observe that as an abstract field the completed residue field $\mathscr H(x)$ is isomorphic to $\mathscr H(x^\lambda)$, but the absolute value of the latter is obtained by raising to the power $\lambda$ the one of the former. 
Therefore, neither the abstract valuation ring $\mathscr H(x)^\circ$ nor the morphism of schemes $\Spec\big(\mathscr H(x)\big)\to X$ associated with $x$ change when replacing $x$ by $x^\lambda$.
It follows that the action of $\R_{>0}$ on $X^\mathrm{an}$ induces an action on the punctured tubular neighborhood $\mathrm c_X^{-1}(Z)\setminus Z^\mathrm{an}$.

\smallskip

We define the \emph{non-archimedean link} $\NL(X,Z)$ of $Z$ in $X$ as the quotient of the punctured tubular neighborhood of $Z$ in $X^\mathrm{an}$ by this action:
\[
\NL(X,Z) = \big( \mathrm{c}_X^{-1}(Z)\setminus Z^{\mathrm{an}} \big) \big/ \R_{>0}.
\]

We endow $\NL(X,Z)$ with the quotient topology, and  see it as a ringed space by endowing it with the following two sheaves. 
The \emph{sheaf of analytic functions} $\mathcal O_{\NL(X,Z)}$ on $\NL(X,Z)$ is by definition the push-forward to $\NL(X,Z)$ of the sheaf of analytic functions $\mathcal O_{\mathrm c_X^{-1}(Z)\setminus Z^\mathrm{an}}$ on the Berkovich analytic space $\mathrm c_X^{-1}(Z)\setminus Z^\mathrm{an}$ via the quotient map.
Analogously, the \emph{sheaf of bounded analytic functions} $\mathcal O_{\NL(X,Z)}^\circ$ on $\NL(X,Z)$ is the push-forward of the sheaf of bounded analytic functions $\mathcal O_{\cent_X^{-1}(Z)\setminus Z^\mathrm{an}}^\circ$.
Both are local sheaves of $k$-algebras and $\mathcal O_{\NL(X,Z)}^\circ$ is a subsheaf of $\mathcal O_{\NL(X,Z)}$.
We will say more about the analytic structure of $\NL(X,Z)$ in the subsequent subsections.
Observe that $\NL(X,Z)$ is a compact topological space by \cite[Proposition 5.9]{fantini:normspaces}.

It is worth noticing that by Remark~\ref{rmk:thu} the space $\mathrm c_X^{-1}(Z)\setminus Z^{\mathrm{an}} $ only depends on the formal completion of $X$ along $Z$, so the same is true for $\NL(X,Z)$.
In particular, if $0$ and $0'$ are closed points of algebraic $k$-varieties $X$ and $X'$ respectively, then the non-archimedean links $\NL(X,0)$ and $\NL(X',0')$ are isomorphic (as locally ringed spaces) if and only if the corresponding complete local rings $\widehat{\mathcal O_{X,0}}$ and $\widehat{\mathcal O_{X',0'}}$ are isomorphic.
It follows that if $k=\C$ then analytically isomorphic singularities have isomorphic non-archimedean links.

One important feature of non-archimedean links is their invariance under modifications.
We use the following non-conventional terminology: we define a \emph{modification} of $(X,Z)$ to be a pair $(Y,D)$, where $Y$ is a normal algebraic $k$-variety and $D$ is a Cartier divisor of $Y$, together with a proper morphism $\pi\colon Y\to X$ which is an isomorphism out of $D$ and such that $D=\pi^{-1}(Z)$ is the (schematic) inverse image of $Z$ through $\pi$.

\medskip

The following result is then a  consequence of the valuative criterion of properness.
\begin{prop}[{\cite[Proposition 1.11]{thuillier:geometrietoroidale}}]\label{proposition_invariance_modifications}
If $\pi\colon(Y,D)\to(X,Z)$ is a modification of $(X,Z)$, then $\pi$ induces an isomorphism of locally ringed spaces $\NL(Y,D)\stackrel{\sim}{\to}\NL(X,Z)$.
\end{prop}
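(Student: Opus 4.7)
The plan is to show that the analytification $\pi^{\mathrm{an}}\colon Y^{\mathrm{an}} \to X^{\mathrm{an}}$ restricts to an $\R_{>0}$-equivariant isomorphism of locally ringed spaces between the punctured tubular neighborhoods $\cent_Y^{-1}(D)\setminus D^{\mathrm{an}}$ and $\cent_X^{-1}(Z)\setminus Z^{\mathrm{an}}$, and that this isomorphism descends to the quotient non-archimedean links. The point-set comparison will come from the valuative criterion of properness, while the structure sheaves will be compared through the formal-geometric interpretation of Remark~\ref{rmk:thu}.

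To produce the bijection on points, I would first observe that $\pi^{\mathrm{an}}$ does send one punctured tubular neighborhood into the other: the center of $\pi^{\mathrm{an}}(y)$ lies in $\pi(D)\subseteq Z$, and since $D=\pi^{-1}(Z)$ scheme-theoretically, $\pi^{\mathrm{an}}(y)\in Z^{\mathrm{an}}$ if and only if $y\in D^{\mathrm{an}}$. To invert this map, I would take $x\in\cent_X^{-1}(Z)\setminus Z^{\mathrm{an}}$, so that the associated morphism $\Spec\mathscr{H}(x)^\circ\to X$ sends the closed point into $Z$ and the generic point into $X\setminus Z$. Properness of $\pi$ then yields a unique lift to $Y$, whose closed point must land in $\pi^{-1}(Z)=D$ and whose generic point lands outside $D$ (because $\pi$ is an isomorphism there), providing a unique preimage $y\in\cent_Y^{-1}(D)\setminus D^{\mathrm{an}}$. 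Since rescaling the semi-norm by $\lambda\in\R_{>0}$ leaves the underlying valuation ring and the scheme-theoretic morphism unchanged, the construction is $\R_{>0}$-equivariant and descends to a bijection $\NL(Y,D)\to\NL(X,Z)$. This bijection is continuous by functoriality of the analytification, and since both links are compact Hausdorff by \cite[Proposition~5.9]{fantini:normspaces} it is automatically a homeomorphism.

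For the structure sheaves I would invoke Remark~\ref{rmk:thu}, which identifies $\cent_X^{-1}(Z)\setminus Z^{\mathrm{an}}$ and $\cent_Y^{-1}(D)\setminus D^{\mathrm{an}}$, as ringed spaces, with the Berkovich generic fibers $\bigl(\widehat{X/Z}\bigr)_\eta$ and $\bigl(\widehat{Y/D}\bigr)_\eta$ of the respective formal completions. The morphism $\pi$ induces at the level of formal completions an admissible formal blowup $\widehat{Y/D}\to\widehat{X/Z}$, and by the classical Raynaud--Berkovich theorem such blowups produce isomorphisms on Berkovich generic fibers, including on their sheaves of analytic and of bounded analytic functions. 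Pushing these isomorphisms forward through the quotient by $\R_{>0}$ gives the desired isomorphism $\NL(Y,D)\stackrel{\sim}{\to}\NL(X,Z)$ of locally ringed spaces. The real obstacle in this argument is the sheaf-theoretic part: the point-level bijection follows almost immediately from the valuative criterion, but the identification of analytic structures genuinely forces one to pass through admissible formal blowups (or else to carry out a hands-on comparison of sections by approximating analytic functions by rational ones on admissible opens and transporting them across $\pi^{\mathrm{an}}$).
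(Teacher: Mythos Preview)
Your outline is correct and indeed matches the paper's treatment: the paper does not supply its own proof here but cites Thuillier, noting only that the result ``is a consequence of the valuative criterion of properness.'' Your point-set argument is exactly that consequence, and your discussion of the sheaf-theoretic part via Remark~\ref{rmk:thu} and the invariance of Berkovich generic fibres under admissible formal blowups is the content of Thuillier's Proposition~1.11 in the trivially valued setting.

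Two small remarks. First, in lifting a point $x\in\cent_X^{-1}(Z)\setminus Z^{\mathrm{an}}$ you should make explicit that the generic point $\Spec\mathscr H(x)\to X$ lifts to $Y$ \emph{because} $\pi$ is an isomorphism over $X\setminus Z$ and the scheme-theoretic image of $x$ lies there; only then does the valuative criterion apply to extend over $\Spec\mathscr H(x)^\circ$. Second, the compact--Hausdorff shortcut for the homeomorphism is fine, but note that the paper only states compactness explicitly; Hausdorffness of $\NL(X,Z)$ follows from the fact that the $\R_{>0}$-action on the punctured tubular neighborhood is free and proper, which you may want to mention if you intend this as a self-contained argument.
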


The map $\mathrm{c}_X$ induces an anticontinuous map $\mathrm{c}_X\colon \NL(X,Z)\to Z$ which we still call the \emph{center map}.
Thanks to the result above, we also have a center map $\mathrm c_Y\colon \NL(X,Z)\to D$ associated with any modification $(Y,D)$ of $(X,Z)$.

%We will see in Lemma~\ref{lemma:topology_NL_center} how the center map can help to visualize the topology of non-archimedean links of surfaces.

\subsection{Analytic structure of non-archimedean links}
We fix an algebraic variety $X$ over $k$, and a nonempty and nowhere dense closed subscheme $Z$ of $X$. 
We now explain some properties of the ringed space structure of the non-archimedean link $\NL(X,Z)$.

\smallskip

Some caution is needed when working with analytic functions on $\NL(X,Z)$, as they cannot be evaluated at points of $\NL(X,Z)$, given that such a point is only a $\R_{>0}$-equivalence class of semi-norms.

However, it is possible to say whether the value of a function $f\in\mathcal O_{\NL(X,Z)}(U)$ at a point $x\in U$ lies in $\{0\}$, $]0,1[$, $\{1\}$, or $]1,+\infty[$, as those are the orbits of $\R_{\geq0}$ under the action of $\R_{>0}$ by exponentiation.
In particular it makes sense to ask whether a function vanishes at a point. 
Moreover, for any open set $U$ one can interpret $\mathcal O_{\NL(X,Z)}^\circ(U)$  as the ring of those functions on $U$ which are bounded by $1$.

\medskip

As follows from the definition, with any point $x$ of $\NL(X,Z)$ is associated a field extension $\mathscr H(x)$ of $k$, endowed with a rank $1$ valuation (but not with an absolute value) trivial on $k$, with respect to which $\mathscr H(x)$ is complete, and therefore a rank $1$ valuation ring which we still denote by $\mathscr H(x)^\circ$. 
Conversely, every complete (rank 1) valuation ring on the residue field of a scheme-theoretic point of $X\setminus Z$ such that its center on $X$ is in $Z$ comes from a point of $\NL(X,Z)$.
Observe that a function $f\in\mathcal O_{\NL(X,Z)}(U)$ is bounded by $1$ on $U$ if and only if $f(x)\in \mathscr H(x)^\circ$ for every $x$ in $U$.
The valued field $\mathscr H(x)$ can be defined more intrinsically, in a way that only depends on the ringed space structure of $\NL(X,Z)$, as the completion of the residue field of the local ring $\mathcal O_{\NL(X,Z),x}$ with respect to the valuation induced by $\mathcal O_{\NL(X,Z),x}^\circ$.

The analytic structure of $\NL(X,0)$ contains abundant information about the pair $(X,0)$, as is clear from the following result of \cite{fantini:normspaces}, which we recall for the reader's convenience.
% in particular it allows to recover the complete local ring of $X$ at $0$.

\begin{prop}[{\cite[Corollary 4.14]{fantini:normspaces}}]\label{proposition_propertiesNL_1}
Let $0$ be any closed point in an algebraic variety $X$ and assume that $X$ is normal at $0$. 
Then the canonical morphism
\[
\widehat{\mathcal O_{X,0}}
\to
\mathcal O_{\NL(X,0)}^\circ \big( \NL(X,0) \big)
\]
is an isomorphism.
\end{prop}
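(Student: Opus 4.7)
The plan is to identify $\mathcal O^{\circ}(\NL(X,0))$ with $A := \widehat{\mathcal O_{X,0}}$ by reducing to a formal-GAGA identification for $\Spf A$ and then carefully handling the removed central point.

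\emph{Reduction.} By Remark~\ref{rmk:thu}, the punctured tubular neighborhood $\mathrm c_X^{-1}(0) \setminus \{0\}^{\mathrm{an}}$ depends only on $\widehat{X/0} = \Spf A$, so one may replace $X$ by $\Spec A$. The $\R_{>0}$-action preserves the sheaf of bounded analytic functions since $|f|^{\lambda} \le 1$ if and only if $|f| \le 1$, hence
$$\mathcal O^{\circ}_{\NL(X,0)}\bigl(\NL(X,0)\bigr) = \mathcal O^{\circ}\bigl(\mathrm c_X^{-1}(0) \setminus \{0\}^{\mathrm{an}}\bigr).$$
Any $f \in A$ evaluates at each $x$ with $|f(x)| \in [0,1]$, because a point $x$ centered at $0$ corresponds to a semi-valuation $v$ with $v \ge 0$ on $A$ and $v > 0$ on $\mathfrak m$; this produces the canonical map $A \to \mathcal O^{\circ}(\NL(X,0))$.

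\emph{Injectivity.} Normality forces $A$ to be an integral domain, so any divisorial valuation on $\Frac A$ centered at $\mathfrak m$ (for instance the $\mathfrak m$-adic one) yields a point $x \in \NL(X,0)$ with $\ker x = (0)$. Hence any $f \in A$ whose image vanishes identically must be zero.

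\emph{Surjectivity.} Given a bounded analytic function $g$ on $\mathrm c_X^{-1}(0) \setminus \{0\}^{\mathrm{an}}$, I would first extend $g$ across the removed point $\{0\}^{\mathrm{an}}$ to a bounded analytic function on the full tubular neighborhood $\mathrm c_X^{-1}(0)$. If $\dim A = 1$ then normality makes $A$ a DVR and $\NL(X,0)$ consists of a single point whose complete residue field is $\Frac A$ with valuation ring $A$, giving the result directly. If $\dim A \ge 2$, one applies a Hartogs-type extension across the single point $\{0\}^{\mathrm{an}}$. Once $g$ is extended, one invokes a formal-GAGA statement: the tubular neighborhood $\mathrm c_X^{-1}(0)$ is canonically the Berkovich generic fiber $(\Spf A)_{\eta}$ over the trivially valued field $k$, and its ring of global bounded analytic functions is precisely $A$, whence surjectivity.

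The main obstacle is the Hartogs-type extension. Since the base field is trivially valued and $A$ need not be regular, standard non-archimedean Hartogs theorems do not apply off the shelf. The substitute I would use is the valuative description of the normal local domain $A$ as the intersection of its rank one valuation overrings with center $\mathfrak m$: the function $v \mapsto v(g)$ is $\ge 0$ on the (dense) set of divisorial points of $\NL(X,0)$, and this intersection description produces an element $f \in A$; continuity of both $g$ and $x \mapsto |f(x)|$ then forces the two to agree on the whole punctured tubular neighborhood.
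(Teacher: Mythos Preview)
The paper does not prove this proposition; it is simply quoted from \cite[Corollary~4.14]{fantini:normspaces} with no argument given, so there is no in-paper proof to compare against.

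On the merits of your sketch, two points. First, a small misidentification: by Remark~\ref{rmk:thu} it is the \emph{punctured} tubular neighborhood $\mathrm c_X^{-1}(0)\setminus\{0\}^{\mathrm{an}}$ that equals Thuillier's generic fibre $(\Spf A)_\eta$, not the full $\mathrm c_X^{-1}(0)$. Hence the Hartogs step across $\{0\}^{\mathrm{an}}$ is a red herring; any ``formal GAGA'' input you have should apply directly to the punctured space. But then the statement ``$\mathcal O^\circ\big((\Spf A)_\eta\big)=A$ over the trivially valued field $k$'' is literally the proposition you are trying to prove, so invoking it is circular.

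Second, and this is the real gap, your valuative fallback does not work as written. The description $A=\bigcap_v R_v$ is an equality of subrings of $\Frac A$, so to feed $g$ into it you must already know that $g$ lies in $\Frac A$. A bounded analytic section is a priori only a compatible family $g(x)\in\mathscr H(x)^\circ$, and each $\mathscr H(x)$ is a \emph{completion} of $\Frac A$ (or of a residue field thereof); nothing in your argument forces these values to come from a single rational function. Producing that single element is precisely the content of surjectivity.

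A route that does close the gap, and is presumably what the cited reference does, uses the local $k((t))$-structure explained in the paper just above the proposition: for each generator $f$ of the maximal ideal, $\NL(X,0)\setminus V(f)$ is the generic fibre of $\Spf A$ viewed as a special formal $k[[t]]$-scheme via $t\mapsto f$, and by de~Jong's theorem its ring of bounded analytic functions is the integral closure of $A$ in $A[1/f]$, hence $A$ by normality. The sheaf axiom on the finite cover $\{\NL(X,0)\setminus V(f_i)\}_i$ then yields the global statement.
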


%This result is a special case of \cite[Corollary 4.14]{fantini:normspaces}, since by construction $\NL(X,0)$ is the non-archimedean link associated with $\Spf\big(\widehat{\cO_{X,0}}\big)$, the formal completion of $X$ at $0$.

%\begin{rmk}
%Proposition~\ref{proposition_propertiesNL_1} can be thought of as a ``completed version'' of the classical commutative algebra fact asserting that an integrally closed domain $R$ is the intersection of all the valuation rings of $\mathrm{Frac}(R)$ containing $R$.
%\end{rmk} 

\subsection{Non-archimedean links and $k((t))$-analytic spaces}
A crucial property of non-archimedean links is that  they are locally isomorphic to analytic spaces over a field of Laurent series $k((t))$ with $t$-adic absolute value, in the following sense.
Choose $\varepsilon\in]0,1[$ and endow $k((t))$ with the $t$-adic absolute value such that $|t|=\varepsilon$.
As we have seen, any Berkovich analytic space $\mathfrak X$ over $k((t))$, for example an open or closed subspace of the analytification of an algebraic $k((t))$-variety, comes equipped both with a sheaf of $k((t))$-algebras $\mathcal O_{\mathfrak X}$ and with a sheaf of $k[[t]]$-algebras $\mathcal O_{\mathfrak X}^\circ$.
We can see these two sheafs only as sheaves in $k$-algebras, yielding a triple which we denote by $\For(\mathfrak X)=\big({\mathfrak X},\mathcal O_{\mathfrak X},\mathcal O^\circ_{\mathfrak X}\big)$.
It then makes sense to ask whether a non-archimedean link is isomorphic as such a triple to a triple of the form $\For(\mathfrak{X})$.
In general, this is true only locally, in the following sense.
If $X$ is affine, then $\NL(X,Z)$ can be covered by finitely many open subspaces which, as ringed spaces in $k$-algebras, are of the form $\For(\mathfrak X)$ for some $k((t))$-analytic space $\mathfrak X$.
Observe that this is also the case if $Z$ is a single point of $X$, since $X$ can then be replaced by an affine neighborhood of $Z$.
If $X$ is not affine, then $\NL(X,Z)$ is covered by the compact domains $\mathrm c_X(U\cap Z)$, for $U$ ranging among an open affine cover of $X$, and each $\mathrm c_X(U\cap Z)$ is locally isomorphic to a $k((t))$-analytic space in the sense above.
Mind that the datum consisting of such a covering and $k((t))$-analytic structures is non canonical. 

A proof of this fact can be found in \cite[Corollary 4.10]{fantini:normspaces}, but to help the reader familiarize with the structure of non-archimedean links we illustrate here what happens in the case when $Z=\{0\}$ is a closed point of $X$.
Let $f$ be an element of the completed local ring $\widehat{\mathcal O_{X,0}}$ of $X$ at $0$.
This defines a $k$-analytic map from $\mathrm{c}_X^{-1}(0)$ into the open unit ball in ${\mathbb A}^{1,\mathrm{an}}_k$. 
The latter is canonically homemorphic to the interval $[0,1[$, and under
this homeomorphism this analytic map is given by the absolute value $\mathrm{c}_X^{-1}(0) \stackrel{|f|}{\longrightarrow} \left[0,1\right[$. 
%
%Since every point of $\NL(X,0)$ has center $0$, the absolute value $|f|$ of the analytic function $f$ is a continuous map 
%$X^\mathrm{an}\supset\mathrm{c}_X^{-1}(x) \stackrel{|f|}{\longrightarrow} \left[0,1\right[$.
%Observe that the map $|f|$ is actually the $k$-analytic map induced by $f$, as $f$ is the germ of an algebraic map from an (algebraic) neighborhood of $x$ in $X$ to a neighborhood of $0$ in $\mathbb A^1_k$ and 
%$\left[0,1\right[$ is homeomorphic to $\{x\in\mathbb A^{1,\mathrm{an}}_k \,\vert\, \mathrm{c}_{\mathbb A^1_k}(x)=0\}$.
The fiber $|f|^{-1}(\varepsilon)$ of $|f|$ at $\varepsilon \in \left]0,1\right[$ is then an analytic space over $k((t))$, since the completed residue field of $\mathbb A^{1,\mathrm{an}}_k$ at the point corresponding to $\varepsilon$ is the field $k((t))$ with the $t$-adic absolute value such that $|t|=\varepsilon$.

Then the projection $\pi\colon \mathrm{c}_X^{-1}(0)\setminus\{0\}^\mathrm{an} \to \NL(X,0)$ defining $\NL(X,0)$ identifies $\For\big(|f|^{-1}(\varepsilon)\big)$ with its image in $\NL(X,0)$, which is the complement $\NL(X,0)\setminus V(f)$ of the zero locus $V(f)$ of $f$ in $\NL(X,0)$.
Finally, by having $f$ range among a finite set of generators of the maximal ideal of $\widehat{\mathcal O_{X,0}}$, we obtain a cover $\NL(X,0)$ with finitely many open subspaces, each of which is isomorphic to a $k((t))$-analytic space.

\begin{rmk}
Observe that the $k((t))$-analytic space $|f|^{-1}(\varepsilon)$ is the analytic Milnor fiber $\mathscr F_{f,0}$ of $f$ at $0$, an object defined and studied in \cite{nicaise-sebag:analyticmilnorfiber}.
The non-archimedean link $\NL(X,0)$ can be thought of as a global version of $\mathscr F_{f,0}$, dependent only on the germ of $X$ at $0$ and not on $f$.
\end{rmk}

\subsection{Discs and annuli}\label{sec:DSK}

Some specific subspaces of non-archimedean links of surfaces are particularly important and deserve to be studied in depth.
Let $T$ denote a coordinate function on the $k((t))$-analytic affine line  $\A^{1,\mathrm{an}}_{k((t))}$.
We say that a subspace $U$ of a non-archimedean link $\NL(X,Z)$ is a \emph{disc} if, as a ringed space in $k$-algebras, $U$ is isomorphic to $\For(D)$, where $D=\big\{x\in \A^{1,\mathrm{an}}_{k((t))} \,\big|\, |T(x)|<1\big\}$ is an open unit $k((t))$-analytic disc.
We say that $U$ is an \emph{annulus} if, as a ringed space in $k$-algebras, $U$ is isomorphic to $\For(A)$, where $A=\big\{x\in \A^{1,\mathrm{an}}_{k((t))} \,\big|\, |t|<|T(x)|<1\big\}$ is an open $k((t))$-analytic annulus of modulus one.
We collect in the following statement some well known results about the topology of discs and annuli. 

\begin{thm}[See {{\cite[\S4.1 and \S4.2]{berkovich:book}}}]
	\label{thm:R-tree}
Let $D=\big\{x\in \A^{1,\mathrm{an}}_{k((t))} \,\big|\, |T(x)|<1\big\}$ and $A=\big\{x\in \A^{1,\mathrm{an}}_{k((t))} \,\big|\, |t|<|T(x)|<1\big\}$ be a $k((t))$-analytic disc and a $k((t))$-analytic annulus respectively.
For any $r\in (0,1)$ (respectively for $r\in (|t|,1)$), denote by $x_r$ the point of $D$ (respectively of $A$) defined by $|P(x_r) | = \sup_{|z|\le r} |P(z)|$ for any $P\in k((t))[T]$. \label{def of xr}
\begin{enumerate}
\item
$D$ and $A$ are uniquely arcwise connected: any two distint points $x, y$ are included in a unique closed subset $I$ which is homeomorphic to the closed interval $[0,1]$
and such that $I \setminus \{x,y\}$ is homeomorphic to the open interval $(0,1)$.
\item
$D$ has a single endpoint. 
Given a continuous, proper, and injective map $\gamma \colon \R_+ \to D$, there exists a constant $C >0$ such that 
$\gamma \big([C, +\infty)\big) = \{ x_r  \textrm{, }  1 - \varepsilon \le r < 1\}$ for some $0<\varepsilon <1$. 
\item
$A$ has two endpoints. 
Given a continuous, proper, and injective map $\gamma\colon \R_+ \to D$, there exist constants $C>0$ and $|t|<\varepsilon <1$ such that $\gamma \big([C, +\infty)\big)$ is either equal to $\{ x_r  \textrm{, }  1 - \varepsilon \le r < 1\}$, or to $\{ x_r\textrm{, } |t| < r\le |t| + \varepsilon\}$. 
\end{enumerate}
\end{thm}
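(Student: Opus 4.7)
The plan is to read off the three statements from two standard properties of Berkovich analytic curves, both established in \cite[\S4.1--4.2]{berkovich:book}: the analytification $\A^{1,\mathrm{an}}_{k((t))}$ is uniquely arcwise connected (a real tree), and each closed sub-tree is the image of a continuous deformation retraction of the ambient space. Since both $D$ and $A$ are connected open subspaces of $\A^{1,\mathrm{an}}_{k((t))}$ which are convex for its tree order (by the ultrametric property applied to $|T|$), statement (i) follows from the inheritance of unique arcwise connectedness.

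Next, I would identify the topological ends. In $D$, the closed sub-discs $\{|T|\le r_0\}$ for $r_0 \in (0,1)$ form a cofinal family of compact subsets with connected complements, so $D$ has a single end, approached as $r\to 1^-$. In $A$, the closed sub-annuli $\{r_1 \le |T| \le r_2\}$ for $|t|<r_1<r_2<1$ are cofinal with complements having two connected components, so $A$ has exactly two ends, approached as $r\to 1^-$ and as $r\to |t|^+$. In both cases the set of points $x_r$ for $r$ in the relevant range is a closed embedded arc, the \emph{skeleton} $\sigma$, fixed by the canonical deformation retraction. Each connected component $U$ of the complement of the skeleton is an open sub-disc attached to $\sigma$ at a single point $x_U$, with closure $\overline U$ compact in $\A^{1,\mathrm{an}}_{k((t))}$ and contained in $D$ (resp. $A$); crucially, the topological boundary $\overline U \setminus U$ equals the singleton $\{x_U\}$.

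For (ii), let $\gamma\colon\R_+\to D$ be continuous, proper, and injective. The key observation is that $\gamma$ can visit at most one branch component $U$ off the skeleton. Indeed, if $\gamma(s) \in U$ at some time $s > 0$ for a branch $U$ that $\gamma$ had not previously entered, then by continuity and the description of $\partial U$ above one has $\gamma(s) = x_U$; to exit $U$ later, $\gamma$ would have to cross $x_U$ a second time, contradicting injectivity; and if $\gamma$ never exits, then $\gamma([s,+\infty)) \subseteq \overline U$ is contained in a compact set, contradicting properness. Hence the only possibility is that $\gamma(0) \in U$ for some branch $U$, and even then $\gamma$ must exit $U$ at a single time $e_0$ (for the same properness reason), after which the argument above prevents it from entering any further branch. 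Thus there is some $C \ge 0$ with $\gamma(t)\in\sigma_D$ for all $t\ge C$. On the skeleton, $\gamma|_{[C,+\infty)}$ is a continuous proper injective map into the half-open arc $\sigma_D$, whose image is therefore a terminal segment $\{x_r : 1-\varepsilon \le r < 1\}$ for some $\varepsilon \in (0,1)$, giving (ii). The proof of (iii) is identical, except that $\gamma$ may exit through either of the two ends of $\sigma_A$, producing the two cases in the statement.

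The main difficulty in the proof is the one-branch-at-most argument in the third paragraph: it is the joint use of injectivity (each attachment point $x_U$ is crossed at most once) and properness (no entrapment in any compact branch $\overline U$) that drastically restricts the behavior of $\gamma$ and reduces the analysis to the essentially one-dimensional skeleton, where the conclusion is immediate from connectedness and the end count.
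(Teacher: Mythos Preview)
The paper does not supply its own proof of this theorem; it is stated as a consequence of the structure theory in \cite[\S4.1--4.2]{berkovich:book} and is used as a black box, so there is no argument in the paper to compare against.

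Your proposal is a reasonable reconstruction of why the result holds, and the overall strategy---reduce to the skeleton via the tree structure, then use that a continuous injective proper map into a half-line (or open interval) must parametrize a terminal sub-ray---is correct and is indeed the content one extracts from Berkovich's description. Two small points of imprecision are worth noting. First, in the sentence ``if $\gamma(s)\in U$ \ldots\ then $\gamma(s)=x_U$'', the conclusion cannot literally hold since $x_U\notin U$; what you need is that the \emph{first} time $\gamma$ meets $\overline U$ it does so at $x_U$, and likewise on exit. Second, the description ``each connected component of the complement of the skeleton is an open sub-disc with compact closure'' is not quite complete for $D$: the origin $T=0$ is a type~$1$ point that lies in $D\setminus\sigma_D$ but is a singleton component, not an open disc. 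This does not damage the argument (the origin is a dead-end of the tree and the injectivity of $\gamma$ prevents it from being visited except possibly at $s=0$), but it should be acknowledged. A cleaner way to phrase the whole step is simply: since $D$ is a real tree, the image of a continuous injective $\gamma$ is itself an arc in $D$; properness forces this arc to converge to the unique end of $D$, and any two rays converging to the same end of a tree eventually coincide, hence $\gamma([C,\infty))$ agrees with a terminal segment of $\sigma_D$.
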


Now let $U$ be a subset of a non-archimedean link $\NL(X,Z)$ and assume that  $U$ is an annulus.
Fix a $k((t))$-analytic annulus $A$ such that $U \cong \For(A)$, and denote by $\Sigma(U)$ the subset of $U$ corresponding to the subset $\{ x_r \textrm{ s.t. } |t| < r < 1\}$ of $A$.
It consists of points of type 2 or 3 of $A$.
The fact that the subset $\Sigma(U)$ of $U$ does not depend on the choice of a $k((t))$-analytic annulus $A$ such that  $U \cong \For(A)$ is a consequence of the following proposition.

\begin{prop}\label{proposition skeleton annulus}
The subset $\Sigma(U)$ of $U$ coincides with the set of points of $U$ which have no neighborhood isomorphic to a disc.
\end{prop}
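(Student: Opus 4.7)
The plan is to prove the two inclusions separately, working under a fixed isomorphism $U \cong \For(A)$ as in the statement.

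For the easier inclusion, I would show that every point $x \in U \setminus \Sigma(U)$ admits a disc neighborhood. The skeleton $\Sigma(A)$ is closed in $A$: it is the fixed-point locus of the continuous retraction $A \to \Sigma(A)$, which is a closed equaliser in the Hausdorff space $A$. By local connectedness of Berkovich curves, the open complement $A \setminus \Sigma(A)$ decomposes as a disjoint union of open connected components. Using the tree structure of the Berkovich affine line, I would identify each such component with a \emph{branch} of $A$ at a type 2 point $y \in \Sigma(A)$, that is, an open analytic disc of the form $\{|T - \alpha| < r\}$ for some $\alpha \in k((t))^{\mathrm{alg}}$ with $|\alpha| = r$. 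The component containing $x$ is then the required disc neighborhood.

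For the other inclusion, let $x \in \Sigma(U)$ correspond to $x_{r_0}$ under the chosen isomorphism, and suppose for contradiction that $V \ni x$ is a neighborhood isomorphic to a disc. The set $V \cap \Sigma(U)$ is open in $\Sigma(U) \cong (|t|, 1)$; let $(a,b)$ be the connected component containing $r_0$. I would then build two continuous injective maps $\gamma_1, \gamma_2 \colon \R_+ \to V$ parametrizing the two halves $\{x_r : a < r \le r_0\}$ and $\{x_r : r_0 \le r < b\}$ of this skeleton sub-interval, with $\gamma_1(0) = \gamma_2(0) = x_{r_0}$. Each $\gamma_i$ is proper: either $x_a \notin V$ (respectively $x_b \notin V$) by maximality of $(a,b)$, or the sub-interval reaches an end of $A$ (when $a = |t|$ or $b = 1$), in which case the image escapes every compact subset of $A$. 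Since the two images meet only at $x_{r_0}$, their tails are disjoint, contradicting the conclusion of Theorem~\ref{thm:R-tree}(2) that in a disc any two such proper injective paths must eventually coincide with the canonical end.

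The main obstacle will be the verification that $\gamma_1$ and $\gamma_2$ are proper in the boundary cases $a = |t|$ or $b = 1$. There the image of $\gamma_i$ extends out to an end of $A$ rather than terminating at a point of the topological boundary of $V$, and properness requires knowing that the ends of the annulus $A$ are not approached by any compact subset. This in turn follows from the fact that $A$ is obtained from a compact closed annulus in $\A^{1,\mathrm{an}}_{k((t))}$ by removing the two boundary points, so a compact $K \subset A$ is automatically bounded away from those missing points, and hence from the two ends.
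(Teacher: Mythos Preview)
Your proof is correct and follows essentially the same approach as the paper's. For the direction ``skeleton points have no disc neighborhood'', the paper argues in one line that any neighborhood of a point of $\Sigma(U)$ has at least two endpoints and hence cannot be a disc; your construction of the two proper injective rays $\gamma_1,\gamma_2$ with disjoint tails is precisely a careful unpacking of this sentence, invoking Theorem~\ref{thm:R-tree}(ii) explicitly rather than implicitly. For the other direction, both you and the paper identify the connected components of $A\setminus\Sigma(A)$ with open balls $D(z,|z|)$ centered at rigid points. The one place where the paper says more is in checking that such a ball really is a \emph{disc} in the precise sense of \S\ref{sec:DSK} (isomorphic as a $k$-ringed space to $\For$ of the $k((t))$-analytic open unit disc): since the center $z$ need not be $k((t))$-rational, the paper passes to the finite extension $k((s))$ over which $z$ is defined, notes that $s$ is a global function on the ball, and concludes that $\For(D(z,|z|))$ is a disc because $k((s))\cong k((t))$ as abstract $k$-algebras. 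Your phrase ``an open analytic disc of the form $\{|T-\alpha|<r\}$ for some $\alpha\in k((t))^{\mathrm{alg}}$'' gestures at the same fact but leaves this last verification implicit.
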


\begin{proof}
	Any neighborhood of a point of $\Sigma(U)$ in $U$ has at least two endpoints, and thus can't be a disc.
	Conversely, let as above $A$ be a $k((t))$-analytic annulus such that $U \cong \For(A)$.
	The complement of $\Sigma(U)$ in $A$ is the union of the open balls $D(z,|z|)$, for $z$ ranging among the rigid points of $A$.
	If $D=D(z,|z|)$ is such a ball, $z$ is a defined over a finite extension $k((s))$ of $k((t))$, and the analytic function $s$ is globally defined on $D$.
	Having a rational point as center and a rational radius, $D$ can be seen as an open $k((s))$-analytic disc, and therefore $\For(D)$ is a disc.
\end{proof}

The following lemma, which will be used in section \ref{section_proof_main}, shows how discs and annuli appear as open subspaces of $\NL(\A^2_k,0)$.

\begin{lem}\label{lemma_disc_annulus}
Let $T$ and $t$ denote two coordinates for $\mathbb A^2_k$ at $0$.
Then:
\begin{enumerate}
\item $\NL(\A^2_k,0) \setminus V(t)$ is a disc;
\item $\NL(\A^2_k,0) \setminus V(tT)$ is an annulus.
\end{enumerate}
\end{lem}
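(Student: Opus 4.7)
The plan is to apply the identification introduced in the preceding subsection: for any $f \in \widehat{\mathcal O_{\A^2_k,0}}$ and any $\varepsilon \in (0,1)$, the projection $\pi$ identifies $\For\big(|f|^{-1}(\varepsilon)\big)$ with $\NL(\A^2_k,0) \setminus V(f)$ as ringed spaces in $k$-algebras. I would prove (1) by choosing $f=t$ and (2) by choosing $f=tT$.

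For (1), unwinding the definitions, since $\cent_{\A^2_k}^{-1}(0) = \{|t|<1,\,|T|<1\} \subset \A^{2,\mathrm{an}}_k$ we get $|t|^{-1}(\varepsilon) = \{|t|=\varepsilon,\,|T|<1\}$. Viewed over the Gauss point of $\A^{1,\mathrm{an}}_k$ of radius $\varepsilon$, whose complete residue field is $k((t))$ endowed with the $t$-adic absolute value $|t|=\varepsilon$, this fiber is literally the open unit $k((t))$-disc $D = \{|T|<1\} \subset \A^{1,\mathrm{an}}_{k((t))}$. Hence $\NL(\A^2_k,0) \setminus V(t) \cong \For(D)$ is a disc.

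For (2), I would apply the same recipe with $f = tT$, setting $\delta = \varepsilon^2$ and writing $s$ for a variable on the base with $|s|=\delta$ representing the function $tT$. The key step is to identify the fiber $|tT|^{-1}(\delta)$ inside $\cent_{\A^2_k}^{-1}(0)$ with the claimed annulus. The algebraic fiber of the multiplication map $tT\colon \A^2_k \to \A^1_k$ over the generic point of $\A^1_k$ is $\Spec k(s)[t,T]/(tT-s) \cong \Spec k(s)[t,t^{-1}]$, via the substitution $T = s/t$, so its $k((s))$-analytification is the punctured affine line $\{0 < |t| < \infty\}$ in $\A^{1,\mathrm{an}}_{k((s))}$. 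The two conditions $|t|<1$ and $|T| = |s|/|t| < 1$ defining $\cent_{\A^2_k}^{-1}(0)$ then cut out precisely the open annulus $A = \{|s| < |t| < 1\}$ of modulus one, yielding $\NL(\A^2_k,0) \setminus V(tT) \cong \For(A)$. The most delicate point in the argument is this fiber computation, in which the choice of the slicing function $tT$ (rather than just $t$ followed by excision of $\{T=0\}$) is what produces the annulus of modulus one directly.
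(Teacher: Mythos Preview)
Your proof is correct and follows essentially the same approach as the paper: for both parts you slice the punctured tubular neighborhood by $|f|=\varepsilon$ with $f=t$ and $f=tT$ respectively, exactly as the paper does. The only cosmetic differences are that in part~(2) you parametrize the resulting annulus by the coordinate $t$ (obtaining $\{|s|<|t|<1\}$ over $k((s))$) while the paper keeps $T$ and rewrites $|t|<1$ as $|tT|<|T|$ to get $\{|t'|<|T|<1\}$ over $k((t'))$; also, the substitution $\delta=\varepsilon^2$ is unnecessary since any $\varepsilon\in(0,1)$ works directly.
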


\begin{proof}
Note that 
$\mathrm{c}_{\A^{2}_{k}}^{-1}(0) = \big\{x\in \A^{2,\mathrm{an}}_{k} \,\big|\, |T(x)|<1, |t(x)|<1 \big\}$.
Therefore we have
\begin{align*}
\NL(\A^{2}_{k},0) \setminus V(t)
& \cong \big\{ x \in \mathrm{c}_{\A^{2}_{k}}^{-1}(0) \,\big|\, |t(x)|=\varepsilon\big\} \\
& = \big\{x\in \A^{2,\mathrm{an}}_{k} \,\big|\, |T(x)|<1, |t(x)|=\varepsilon\big\} \\
& \cong \For\big(\big\{x\in \A^{1,\mathrm{an}}_{k((t))} \,\big|\, |T(x)|<1\big\}\big) = \For(D).
\end{align*}
The isomorphisms above respect the ringed space structure, proving $(i)$.

Set now $t'=tT$.
Then we have
\begin{align*}
\NL(\A^2_k,0)\setminus V(t')
& \cong \big\{\mathrm{c}_{\A^{2}_{k}}^{-1}(0) \,\big|\, |t(x)|=\varepsilon\big\}
 \\
& = \big\{x\in \A^{2,\mathrm{an}}_{k} \,\big|\, |T(x)|<1, |t(x)|<1, |t'(x)|=\varepsilon\big\}
 \\
& = \big\{x\in \A^{2,\mathrm{an}}_{k} \,\big|\, |t'(x)|<|T(x)|<1, |t'(x)|=\varepsilon\big\}
 \\
& \cong \For\big(\big\{x\in \A^{1,\mathrm{an}}_{k((t'))} \,\big|\, |t'|<|T(x)|<1\big\}\big)
 = \For(A),
\end{align*}
which concludes the proof of $(ii)$.
\end{proof}

\begin{rmk}\label{remark_disc_minus_point}
Note that $V(t)$ and $V(T)$ are single points of $\NL(\A^{2}_{k},0)$.
Observe that the lemma also shows that a punctured open $k((t))$-analytic disc $D\setminus\{0\}$ and an open $k((t))$-analytic annulus of modulus one, which are not isomorphic as analytic spaces, have isomorphic underlying ringed spaces in $k$-algebras.
\end{rmk}

\subsection{Morphisms}
A morphism $\NL(X,Z) \to \NL(X',Z')$ between two non-archimedean links is a morphism of the underlying ringed spaces
\[
\Big(\NL(X,Z),\mathcal O_{\NL(X,Z)},\mathcal O^\circ_{\NL(X,Z)}\Big) 
\longrightarrow 
\Big(\NL(X',Z'),\mathcal O_{\NL(X',Z')},\mathcal O^\circ_{\NL(X',Z')}\Big)
\]
which can be locally lifted to a morphism of $k((t))$-analytic spaces.
Rather than giving the precise definition from \cite[6.1]{fantini:normspaces}, we content ourselves with giving a list of example of morphism of non-archimedean links, including all the morphisms which will be considered in this paper.

\begin{enumerate}[label=(\roman{enumi})]

\item 
An \emph{isomorphism} of non-archimedean links $\NL(X,Z) \to \NL(X',Z')$ is an isomorphism of the underlying ringed spaces.

\item \label{item:bloqups_induce_isomorphism} 
As noted in Proposition~\ref{proposition_invariance_modifications}, if $ f \colon (X,Z) \to (X',Z')$ is a modification then the induced morphism $f \colon \NL(X,Z) \to \NL(X',Z')$ is an isomorphism. 

\item 
A morphism of $k$-varieties $f \colon X \to X'$ such that $f^{-1}(Z') = Z$ induces a morphism of non-archimedean links $f \colon \NL(X,Z) \to \NL(X',Z')$.

\item 
In particular, if $Y \subsetneq X$ is a subvariety such that $Y\cap Z$ is nowhere dense in $Y$, % that is not included in $Z$,
then the induced morphism $\NL(Y, Y\cap Z) \to \NL(X,Z)$ is a closed immersion, that is a map which lifts to closed immersions of $k((t))$-analytic spaces in the sense of Berkovich.
Its image is the closed subspace of $\NL(X,Z)$ consisting of the elements coming from elements of $Y^\mathrm{an}$; note that each such point is a semi-norm with nontrivial kernel.

\item 
If $(X,Z)$ are as above and $F$ is a closed subvariety of $Z$, then $\cent_X^{-1}(F)$, which is isomorphic to $\NL(X,F)\setminus Z^\mathrm{an}$, is an open subspace of $\NL(X,Z)$.

\end{enumerate}

\begin{rmk}
Observe that the converse to \ref{item:bloqups_induce_isomorphism} is partially true, in the following sense. 
By \cite[Theorem~6.4]{fantini:normspaces}, if $\NL(X,Z)$ is isomorphic to $\NL(X',Z')$ then there exist modifications $(Y,D)\to(X,Z)$ and $(Y',D')\to(X',Z')$ such that the corresponding formal completions $\widehat{Y/D}$ and $\widehat{Y'/D'}$ are isomorphic as formal schemes.
\end{rmk}

%%%%%%%%%%%%%%%%%%
%%%%%%%%%%%%%%%%%%

% \newpage

\section{Non-archimedean links of surfaces}
\label{section_links_surfaces}

In this section we assume that $X$ is a \emph{normal algebraic surface} over an algebraically closed field $k$ and that $0$ is a closed point of $X$.
Our approach follows the one of \cite[Sections 7, 9, 10]{fantini:normspaces}, but for some results we will provide more concrete proofs.

\subsection{Topology of $\NL(X,0)$ and center maps}
In dimension two the topology of $\NL(X,0)$ can be described in terms of its center maps as follows.

\begin{prop}\label{prop:topo_NL}
	Let $(X,0)$ be a normal surface singularity, and let $x$ be a point of $\NL(X,0)$.
	Then the family of all sets of the form $\mathrm c_Y^{-1}\big(\overline{\mathrm c_Y(x)}\big)$, for $(Y,D)$ ranging over all the modifications of $(X,0)$, is a basis of neighborhoods of $x$ in $\NL(X,0)$. 
\end{prop}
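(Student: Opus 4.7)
The plan is to establish the two defining properties of a basis of neighborhoods in turn. First I will verify that each set $\mathrm c_Y^{-1}\big(\overline{\mathrm c_Y(x)}\big)$ is an open neighborhood of $x$, and then I will show that every open neighborhood of $x$ contains such a set.

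For the first property, $\overline{\mathrm c_Y(x)}$ is closed in $D$, being the Zariski closure of a scheme-theoretic point. Anticontinuity of the center map $\mathrm c_Y\colon \NL(X,0)\to D$ therefore implies that $\mathrm c_Y^{-1}\big(\overline{\mathrm c_Y(x)}\big)$ is open in $\NL(X,0)$, and it evidently contains $x$.

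For the basis property, my plan is to exploit the local $k((t))$-analytic structure of $\NL(X,0)$ recalled in the previous subsection. Up to replacing $X$ by an affine open neighborhood of $0$, I may assume that $X$ is affine. Picking any $f$ in the maximal ideal of $\widehat{\mathcal O_{X,0}}$ with $f(x)\neq 0$, the open subspace $\NL(X,0)\setminus V(f)$ is identified, as a ringed space in $k$-algebras, with $\For(\mathscr F_{f,0})$, where $\mathscr F_{f,0}$ is the analytic Milnor fiber of $f$ at $0$, a $k((t))$-analytic surface. Intersecting any prescribed open neighborhood $U$ of $x$ with such an open reduces the problem to producing a basis of neighborhoods of $x$ inside a $k((t))$-analytic space. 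In this framework, a classical consequence of Raynaud's theory — equivalently, of Berkovich's theory of reductions — asserts that, for any admissible formal $k[[t]]$-model $\mathfrak Y$ of $\mathscr F_{f,0}$, the sets $\mathrm{sp}_{\mathfrak Y'}^{-1}\big(\overline{\mathrm{sp}_{\mathfrak Y'}(x)}\big)$, as $\mathfrak Y'$ ranges over the admissible formal blowups of $\mathfrak Y$, form a basis of neighborhoods of $x$. Since every modification of a normal surface singularity is dominated by a finite composition of blowups of closed points, every such admissible formal blowup can be realized as the formal completion of an algebraic modification $(Y,D)\to(X,0)$, and under this identification the center map $\mathrm c_Y$ restricts on $\For(\mathscr F_{f,0})$ to the specialization map of the corresponding model. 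Choosing the modification finely enough then yields $\mathrm c_Y^{-1}\big(\overline{\mathrm c_Y(x)}\big)\subset U$.

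The main obstacle will be the careful translation between formal and algebraic data: one must verify that a formal blowup specified locally around $x$ can be promoted to a genuine algebraic modification of the whole surface $X$, and that the specialization map of the formal model agrees with the restriction of $\mathrm c_Y$ under the identification $\NL(X,0)\setminus V(f)\cong \For(\mathscr F_{f,0})$. In dimension two both points are manageable: modifications of normal surface singularities decompose as iterated blowups of closed points, so their formal analogues lift to genuine algebraic blowups; and by Proposition~\ref{proposition_invariance_modifications} the formal completion $\widehat{X/0}$ already determines $\NL(X,0)$ together with its sheaves of analytic and bounded analytic functions, which is precisely what is needed to match the two specializations.
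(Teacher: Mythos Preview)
Your approach differs substantially from the paper's, which gives a direct, self-contained argument without passing through the $k((t))$-analytic structure or Raynaud's theory. The paper identifies $\NL(X,0)$ with the set of seminorms normalized by $\min_{f\in\mathfrak m}(-\log|f|)=1$, reduces to sub-basic open sets of the form $U_<(f,p,q)=\{|f|<e^{-p/q}\}$ and $U_>(f,p,q)=\{|f|>e^{-p/q}\}$ with $f\in\mathfrak m$, and then explicitly constructs a modification $\pi\colon Y\to X$ on which the rational functions $g_i^p/f^q$ (for generators $g_i$ of $\mathfrak m$) become regular maps to $\mathbb P^1_k$; it then checks that each $U_<$ and $U_>$ coincides with $c_Y^{-1}$ of an explicit closed subset $\mathcal E_<$ or $\mathcal E_>$ of $D$. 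This is entirely elementary and avoids any algebraization issues.

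Your route has a genuine gap at the translation step. What you need is: given an admissible formal $k[[t]]$-blowup $\mathfrak Y'\to\mathfrak Y$ of a model of $\mathscr F_{f,0}$, produce an \emph{algebraic} modification $(Y,D)\to(X,0)$ whose center map restricts, on $\NL(X,0)\setminus V(f)$, to $\mathrm{sp}_{\mathfrak Y'}$. Your justification---``modifications of normal surface singularities decompose as iterated blowups of closed points''---is both false as stated (the minimal resolution of a genuinely singular $(X,0)$ is a modification that is not a composition of point blowups of $X$) and in any case points the wrong way: it would tell you that algebraic modifications yield formal models, not that admissible formal blowups algebraize. Moreover, the objects do not line up on the nose: admissible formal models of $\mathscr F_{f,0}$ are formal $k[[t]]$-schemes, whereas the completions $\widehat{Y/D}$ arising from modifications of $(X,0)$ are special formal $k$-schemes, and matching these two families (with compatible specialization/center maps) requires more than an appeal to Proposition~\ref{proposition_invariance_modifications}. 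Finally, a minor slip: $\mathscr F_{f,0}$ is a $k((t))$-analytic \emph{curve}, not a surface.
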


\begin{proof}
	Let $\mathfrak{m}$ be the maximal ideal of $\cO_{X,0}$, which consists of the functions vanishing at $0$, let $V$ be any affine neighborhood of $0$ in $X$, and denote by $\mathrm{L}(X,\mathfrak{m})$ the subset of $U^{\mathrm{an}}$ consisting of the multiplicative semi-norms $x$ on $\mathcal O_X(U)$ whose restriction to $k$ is trivial and such that 
	$\min_{f\in \mathfrak{m}} -\log|f(x)| = 1$.
	It is a compact subset of $\big( \mathrm{c}_X^{-1}(0)\setminus \{0\}^{\mathrm{an}} \big)$ and the natural projection $\mathrm{L}(X,\mathfrak{m})\to \NL(X,0)$,
	is bijective hence a homeomorphism. 
	It is then sufficient to prove that the family of all subsets of $\mathrm L(X,\mathfrak{m})$ of the form $\mathrm c_Y^{-1}\big(\overline{\mathrm c_Y(x)}\big)$, for $(Y,D)$ ranging over all the modifications of $(X,0)$, is a basis of neighborhoods of any given point $x$ of $\mathrm L(X,\mathfrak{m})$. %$\mathrm L(X,0)$  
	Any set of the form $\mathrm c_Y^{-1}\big(\overline{\mathrm c_Y(x)}\big)$ contains $x$, and it is open because the center map is anti-continuous. 
	We have to prove that, given any open subset $U$ of $\mathrm L(X,\mathfrak{m})$ containing $x$, there exists a modification $(Y,D) \to (X,0)$ such that $\mathrm c_Y^{-1}\big(\overline{\mathrm c_Y(x)}\big) \subset U$.
	Since $X^{\mathrm{an}}$ is endowed with the weakest topology making all evaluations maps $y \mapsto |f(y)|$ continuous, it is sufficient to prove it assuming that $U$ is a finite intersection of sets of the form
	$U_<(f,p,q)= \{y, |f(y)| <e^{-p/q}\}$ or $U_>= \{y, |f(y)| >e^{-p/q}\}$, where $f$ is an element of $\mathcal O_X(U)$ and $p,q$ are coprime integers.
	Moreover, since $|f| =1$ on $\mathrm{L}(X,\mathfrak{m})$ whenever $f\notin \mathfrak{m}$, we can also assume that $f\in \mathfrak{m}$.
	It is sufficient to find a modification $(Y,D) \to (X,0)$ such that  $U_<(f,p,q)$ (resp.  $U_>(f,p,q)$) is included in a set of the form $ c_Y^{-1} (\cE_<)$ (resp. $ c_Y^{-1} (\cE_>)$), for some closed subschemes $\cE_<$ and $\cE_>$ of $D$. 
	In order to do so we proceed as follows.
%	
%	In the sequel, we identify a rational function $h \in k(X)$ to a rational map $h: X \dashrightarrow \mathbb{P}^1_k = \mathbb{A}^1_k \cup \{ \infty\}$, so that the divisor of poles of $h$	is the preimage of the point $\infty$. 
%	Note that when $h$ is regular from $X$ to $\mathbb{P}^1_k$ then its divisor of poles and its divisor of zeroes are disjoint. 
%	
		Let $g_1, \ldots, g_N$ be a finite set of generators of $\mathfrak{m}$, and choose a modification $\pi\colon (Y,D) \to (X,0)$ 
		such that $g_i^p/f^q \circ \pi $ defines a regular map  $Y \to \mathbb{P}^1_k$ for all $i=1, \ldots, N$.
		%, so that the divisor of poles and the divisor of zeros of $g_i^p/f^q \circ \pi $ are disjoint.
		%We may and shall assume that $Y$ is smooth and $D$ is a divisor with normal crossing singularities, see e.g.~\cite{}. 
		Let $\cE_>$ be the union of all  (scheme-theoretic) points $\xi$ of $D$ such that 
		\begin{equation}\label{eq:>}
		\ord_\xi( f^q\circ \pi)> \ord_\xi(\mathfrak{m}^p) := \mathrm{min}_{g\in\mathfrak M^p}\{\ord_\xi(g\circ\pi)\}
		= \mathrm{min}_{g\in\mathfrak M}\{\ord_\xi(g^p\circ\pi)\}~.
		\end{equation}
		Observe that $z\in \cE_>$ if and only if the value of the rational function $g_i^p/f^q (\pi(z))$ is equal to  $\infty \in \mathbb P^1_k$ for at least one index $i$, which implies that $\cE_>$ is a finite union of closed points in $D$ together with those irreducible components $E$ of $D$ whose generic point $\xi_E$ satisfies~\eqref{eq:>}.
		In particular $\cE_>$ is closed in $D$. One defines in the same way $\cE_<$ as the set of points $\xi\in D$ satisfying
		\[
		\ord_E( f^q\circ \pi)< \ord_E(\mathfrak{m}^p),
		\]
		and similarly $\cE_<$ is closed (it is in fact a finite union of irreducible components of $D$). 
		
		Since $g_i^p/f^q \circ \pi $ is regular, we have that $\cE_> \cap \cE_< =\emptyset$, and moreover
	\begin{align*}
		U_<(f,p,q) & = \{y, \, -\log |f^q(y)| > p\} = c_Y^{-1} (\cE_<)
		\\
		U_>(f,p,q) & = \{y,\, -\log |f^q(y)| < p\} = c_Y^{-1} (\cE_>)~,
	\end{align*}
	which concludes the proof. 
%\lorenzo{I imagine that you wanted to sweep this under the rug, but we also need a simple argument to address the case where $U$ is a finite intersection of sets of the form above; it's necessary to pick a suitable modification. I couldn't find a very short way of saying it; any ideas? I'm ok with not giving a complete argument, but we have to write something.}
\end{proof}

\subsection{Types of points}
Observe that a point $x$ of $\NL(X,0)$ corresponds to an equivalence class of semi-norms on $X$; being centered in $0$, those induce semi-norms on the completed local ring $\widehat {\mathcal O_{X,0}}$.
The points of $\NL(X,0)$ can then be divided into four different types by looking at the associated valued field ${\mathscr H(x)}$, its trascendence degree over $k$, and its valuative invariants.

We say that a point $x$ of $\NL(X,0)$ is a \emph{rigid point} (or of \emph{type 1}) if the transcendence degree $\mathrm{trdeg}_{k}{\mathscr H(x)}$ of $\mathscr H(x)$ over $k$ is equal to $1$.
Equivalently, $x$ is a rigid point if it corresponds to an equivalence class of semi-norms on $\widehat {\mathcal O_{X,0}}$ with nontrivial kernel.
Moreover, this kernel is generated by an irreducible element of $\widehat {\mathcal O_{X,0}}$, so that $x$ corresponds to an irreducible germ of a formal curve on $(X,0)$; then $x$ can be seen as the equivalence class of the order of vanishing along this germ.
When this is the case, the rational rank
\[
\mathrm{rank}_\Q \big( |\mathscr H(x)^\times|/|k^\times|\otimes_\Z\Q \big)
\]
of $\mathscr H(x)$ is equal to $1$ and its residue field $\widetilde{\mathscr H(x)}$ is equal to $k$.

In every other case we have $\mathrm{trdeg}_{k}{\mathscr H(x)}=2$.
We say that $x$ is a \emph{divisorial point} (or of \emph{type 2}) if the residue field $\widetilde{\mathscr H(x)}$ is a nontrivial extension of $k$, while we say that $x$ is of \emph{type 3} if the rational rank of $\mathscr H(x)$ is equal to $2$.
Finally, we say that $x$ is of \emph{type 4} if it satisfies $\mathrm{rank}_\Q \big( |\mathscr H(x)^\times|/|k^\times|\otimes_\Z\Q \big)=1$ and $\widetilde{\mathscr H(x)}=k$ but it is not of type 1 (that is, the corresponding semi-norms on $\widehat{\cO_{X,0}}$ are norms).

Every point of $\NL(X,0)$ is of one (and only one) of the four types above, since by Abhyankar's inequality \cite{abhyankar:valuationscentered} we have
\[
\mathrm{rank}_\Q \big( |\mathscr H(x)^\times|/|k^\times|\otimes_\Z\Q \big)+\mathrm{trdeg}_{k}\widetilde{\mathscr H(x)}\leq \mathrm{trdeg}_{k}{\mathscr H(x)} \leq 2,
\]
and $\mathrm{rank}_\Q \big( |\mathscr H(x)^\times|/|k^\times|\otimes_\Z\Q \big)\geq1$ because the valuation associated with $x$ is nontrivial.
%\footnote{The fact that divisorial valuations are precisely those of rat rk 1 and residual trdeg 1 is classical, see e.g. \cite[Example 7, Proposition 10.1]{Vaquié00}. Where should we explain what divisorial valuations are?}

Observe that an isomorphism of non-archimedean links respects the complete residue fields, and therefore must send a point to a point of the same type.

\begin{rmk}
Recall that $\NL(X,0)$ is locally isomorphic to a $k((t))$-analytic curve. Under any such isomorphism the type of a 
point as defined above coincides with its type as defined by Berkovich. 
For a definition of types of points in Berkovich curves in terms of their valuative invariants see e.g. \cite[3.3.2]{ducros:structurecourbesanalytiques}.
\end{rmk}

\begin{rmk}
Let $(Y,D)$ be a modification of $(X,0)$ and let $x$ be a point of $\NL(X,0)$.
The existence of a morphism $\overline\alpha\colon \Spec \big( \mathscr H(x)^\circ \big) \to Y$ associated with $x$ implies that the residue field $\widetilde{\mathscr H(x)}$ of $\mathscr H(x)$ is a field extension of the schematic residue field of $Y$ at $\mathrm{c}_Y(x)$.
It follows that $\cent_Y(x)$ is a closed point of $Y$ if $x$ is not divisorial.
Proposition~\ref{prop:topo_NL} implies then that such a point $x$ has a basis of neighborhoods of the form $\cent_Y^{-1}\big({\mathrm c_Y(x)}\big)$, for $(Y,D)$ ranging over all the modifications of $(X,0)$. 
\end{rmk}

\begin{rmk}\label{rmk:divisorial}
	Let $(Y,D)$ be a modification of $(X,0)$ and let $x$ be a point of $\NL(X,0)$.
	If $x$ is divisorial, then $\mathrm{c}_Y(x)$ is either a closed point of $Y$ or the generic point of an irreducible curve in $Y$.
	Moreover, it is always possible to find a modification $(Y',D')$ of $(X,0)$ that dominates $(Y,D)$ and such that $\mathrm{c}_{Y'}(x)$ is the generic point of an irreducible curve $E$ in $Y'$, which explains our terminology. 
	Furthermore, the residue field $\widetilde{\mathscr H(x)}$ of ${\mathscr H(x)}$ is then isomorphic to the function field $k(E)$ of $E$.
\end{rmk}

\subsection{Formal modifications and fibers of the center maps}
Let $X$ be a normal $k$-surface and let $0$ be a closed point of $X$.
We start by fixing some notation.

A modification $(Y,D)$ of $(X,0)$ is said to be a \emph{resolution} of $(X,0)$ if $Y$ is regular.
If moreover the irreducible components of $D$ are all non-singular, intersect transversally and no three of them meet at a point, then $(Y,D)$ is said to be a \emph{good resolution} of $(X,0)$. 
Whenever $C$ is a germ of (formal) curve in $(X,0)$, then a good resolution $(Y,D)$ of $(X,0)$ is also said to be a good resolution of $C$ if the strict transform of $C$ in $Y$ meets $D$ transversally. 

% (by which we mean that $D_{\mathrm{red}}$ has normal crossings, but not necessarily strict normal crossings, in the usual sense).
%Note that $D$ being Cartier is not equivalent to $f^{-1}(Z)=D_{\mathrm{red}}$ being Cartier, therefore our notion of log resolution is different from the notion of good resolution that is sometimes found in the literature.

We mentioned that the non-archimedean link of a pair $(Y,D)$ only depends on an infinitesimal neighborhood of $D$ in $Y$.
The notions above can then be slightly generalized by working in a suitable category of formal $k$-schemes.
A formal $k$-scheme $\Y$ is called \emph{special} if it is covered by formal subschemes $\Y_i$ such that each $\Y_i$ can be written as the formal completion of a $k$-scheme of finite type $Y_i$ along a closed subscheme $D_i$.
We can then define the non-archimedean link $\NL(\Y)$ of $\Y$ by gluing the non-archimedean links $\NL(Y_i,D_i)$.
Observe that when $\Y=\widehat{Y/D}$ is the formal completion of $Y$ along $D$, then $\NL(\Y)=\NL(Y,D)\cong\NL(X,0)$.
We also obtain a center map $\cent_\Y\colon \NL(\Y) \to \Y_0$, where the reduction $\Y_0$ of $\Y$ is a scheme of finite type over $k$ covered by the reduced schemes associated to the $D_i$.

A special formal $k$-scheme $\Y$ is called a \emph{formal modification} of the pair $(X,0)$ if it is normal and it comes endowed with an adic morphism $f\colon\Y\to\X=\widehat{X/0}$ that induces an isomorphism of non-archimedean links $\NL(\Y)\stackrel{\sim}{\longrightarrow}\NL(X,0)$ and such that the fiber product $\Y\times_\X \{0\}$ is a Cartier divisor of $\Y$.
If $(Y,D)\to(X,0)$ is a modification, then the formal completion $\Y=\widehat{Y/D}\to\X$ of $Y$ along $D$ is a formal modification of $(X,0)$.
Such a formal modification $\Y$ of $(X,0)$ is said to be \emph{algebraizable}, and a modification $(Y,D)$ of $(X,0)$ such that $\Y\to\X$ is isomorphic to $\widehat{Y/D}\to\X$ is called an \emph{algebraization} of $\Y$.
%By Grothendieck's formal GAGA theorem \cite[5.1.4]{EGA3.1}, a log modification $(Y,D)\to(X,Z)$ is uniquely determined by the formal log modification $\widehat{Y/D}\to\X$ it algebraizes, and if $\Y\cong\widehat{Y/D}$ and $\Y'\cong\widehat{Y'/D'}$ are two formal log modifications that are algebraizable then $\mathrm{Hom}_\X(\Y',\Y)\cong\mathrm{Hom}_X(Y',Y)$.
If $\Y$ is a formal modification of $(X,0)$ such that $\Y$ is regular, then by \cite[Proposition~7.6]{fantini:normspaces} $\Y$ is algebraized by a resolution of $(X,0)$.
Observe that a formal modification $\Y$ of $(X,0)$ induces an isomorphism of non-archimedean links $\NL(\Y)\cong\NL(X,0)$, and therefore also a center map $\cent_\Y\colon \NL(X,0) \to \Y_0$.

If $\Y$ is a formal  modification of $(X,0)$, we denote by $\Div(\Y)$ the finite nonempty subset of $\NL(X,0)$ consisting of the divisorial points associated with the components of $\Y_0$. 
Whenever $\Y$ is algebraized by a modification $(Y,D)$, we will also denote $\Div(\Y)$ by $\Div(Y)$.

The following proposition is a simple consequence of Lemmas 7.14 and 9.3 of \cite{fantini:normspaces}.

\begin{prop}\label{proposition_propertiesNL}
Let $(X,0)$ be a normal surface singularity and let $\Y$ be a formal modification of $(X,0)$.
Then the following properties hold:
\begin{enumerate}[label=(\roman{enumi}),ref=\thethm.({\roman*})]
%\item \label{proposition_propertiesNL_1}
% $\mathcal O_{\NL(X,0)}^\circ \big( \NL(X,0) \big) \cong \widehat{\mathcal O_{X,0}}$.
\item \label{proposition_propertiesNL_2}
 The map $\cent_{\Y}^{-1}$ gives a bijection between the set of closed points of $\Y_0$ and the set of connected components of $\NL(X,0)\setminus \Div(\Y)$.
\item \label{proposition_propertiesNL_3}
Let $W$ be a connected component of $\NL(X,0)\setminus \Div(\Y)$, let $p$ be the corresponding closed point of $\Y_0$, let $\Y_p=\Spf \big(\widehat{\cO_{\Y,p}} \big)$ be the formal completion of $\Y$ along $p$, let $\mc I_p$ be the ideal of $\widehat{\cO_{\Y,p}}$ which defines $\Y_0$ locally around $p$, and denote by $\varphi\colon\NL(\Y_p)\to \NL(X,0)$ the map induced by the composition $\Y_p\to\Y\to\X$.
Then $\varphi$ maps the zero locus $V(\mc I_p)$ of $\mc I_p$ in $\NL(\Y,p)$ to a finite set of type 1 points of $\NL(X,0)$, and it induces an isomorphism $\NL(\Y_p)\setminus V(\mc I_p) \cong W \subset \NL(X,0)$.
\end{enumerate}
\end{prop}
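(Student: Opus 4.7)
The two statements are complementary, and my plan is to prove (ii) first and then deduce (i) from it; both parts are essentially an unpacking of Lemmas~7.14 and~9.3 of \cite{fantini:normspaces}, which I will outline.

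For (ii), I would start by identifying $V(\mc I_p)$ on the nose. Since $\widehat{\cO_{\Y,p}}$ is a $2$-dimensional normal local ring and $\mc I_p$ is the (height-one) ideal defining $\Y_0$ at $p$, its minimal primes are the finitely many formal branches of $\Y_0$ through $p$. Each such height-one prime $\mathfrak p$ determines, after normalizing by $\min_{f\in\mathfrak m_p} v(f)=1$, a single semi-valuation on $\widehat{\cO_{\Y,p}}$ with kernel $\mathfrak p$, i.e.\ a type 1 point of $\NL(\Y_p)$. Thus $V(\mc I_p)$ is a finite set of type 1 points of $\NL(\Y_p)$. Next, I would define $\varphi$ concretely as pullback of semi-valuations along the ring map $\widehat{\cO_{X,0}}\to\widehat{\cO_{\Y,p}}$ induced by $\Y_p\to\Y\to\X$. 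This pullback is well-defined into $\NL(X,0)$ exactly off $V(\mc I_p)$: the Cartier divisor condition for the formal modification forces $\mathfrak M\cdot\widehat{\cO_{\Y,p}}$ to be locally the ideal of $\Y_0$ at $p$ (reducedly $\mc I_p$), so a semi-valuation belongs to $V(\mc I_p)$ if and only if its pullback kernel contains $\mathfrak M$, i.e.\ equals the excluded point $\{0\}^{\mathrm{an}}$.

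To see that the image of $\varphi$ on $\NL(\Y_p)\setminus V(\mc I_p)$ lies in $W=\cent_\Y^{-1}(p)$, I would use that every point of $\NL(\Y_p)$ is a semi-valuation centered at $\mathfrak m_p$, hence has center $p$ on $\Y$ after composing with $\Y_p\to\Y$; then the isomorphism $\NL(\Y)\cong\NL(X,0)$ from Proposition~\ref{proposition_invariance_modifications} places the image in $\cent_\Y^{-1}(p)$. For surjectivity onto $W$, any $y\in W$ is represented by a semi-valuation on the stalk $\cO_{\Y,p}$ centered at $p$; it takes values $<1$ on $\mathfrak m_p$, so by continuity it extends uniquely to $\widehat{\cO_{\Y,p}}$, yielding a point of $\NL(\Y_p)$ which cannot lie in $V(\mc I_p)$ (otherwise $y$ would be $\{0\}^{\mathrm{an}}$, contradicting $y\in\NL(X,0)$). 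Finally, to upgrade this pointwise bijection to an isomorphism of ringed spaces in $k$-algebras, I would invoke the local $k((t))$-analytic structure described in Section~\ref{section_preliminarieslink}: the morphism $\Y_p\to\X$ of formal schemes induces a morphism of the associated analytic structures, and one checks it agrees with $\varphi$ on each $k((t))$-analytic chart and is an isomorphism there because the completed local rings match by construction.

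For (i), I would first observe that every $x\in\NL(X,0)\setminus\Div(\Y)$ has well-defined center $\cent_\Y(x)\in\Y_0$, and this center is a closed point of $\Y_0$: indeed, by Remark~\ref{rmk:divisorial} the only points of $\NL(X,0)$ whose $\Y$-center is the generic point of an irreducible component of $\Y_0$ are precisely the elements of $\Div(\Y)$. This gives the disjoint decomposition
\[
\NL(X,0)\setminus\Div(\Y) \;=\; \bigsqcup_{p} \cent_\Y^{-1}(p),
\]
indexed by closed points $p$ of $\Y_0$, and each $\cent_\Y^{-1}(p)$ is open by anticontinuity of the center map. Applying (ii), each fiber is isomorphic to $\NL(\Y_p)\setminus V(\mc I_p)$, the complement of finitely many endpoint-type points in the non-archimedean link of a normal surface germ; such a space is path-connected (the link of a surface germ is locally a tree-like $k((t))$-analytic curve, and removing finitely many type 1 endpoints preserves connectedness, cf.\ Theorem~\ref{thm:R-tree}). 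Hence the fibers $\cent_\Y^{-1}(p)$ are exactly the connected components of $\NL(X,0)\setminus\Div(\Y)$, giving the desired bijection.

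The main obstacle is the passage from the pointwise bijection in part (ii) to an isomorphism of ringed spaces: the content truly lies in compatibility with the local $k((t))$-analytic structure of non-archimedean links, which is exactly what Lemmas~7.14 and~9.3 of \cite{fantini:normspaces} are designed to provide.
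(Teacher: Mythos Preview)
Your proposal is correct and follows the same path as the paper: the paper does not give any argument at all beyond the sentence ``a simple consequence of Lemmas~7.14 and~9.3 of \cite{fantini:normspaces}'', and your outline is precisely an unpacking of those two lemmas. Two minor remarks. First, you correctly note that $\varphi$ is only well-defined as a map into $\NL(X,0)$ on the complement of $V(\mc I_p)$; this is the right reading of the statement, whose phrasing about $\varphi$ ``mapping $V(\mc I_p)$ to type 1 points of $\NL(X,0)$'' should be understood as saying that $V(\mc I_p)$ consists of finitely many type~1 points of $\NL(\Y_p)$ (this is indeed how the proposition is used later, e.g.\ in the proof of Lemma~\ref{lem:structure sandw}). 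Second, in your argument for~(i) you justify connectedness of $\NL(\Y_p)\setminus V(\mc I_p)$ by appealing to Theorem~\ref{thm:R-tree}; that theorem concerns discs and annuli, so strictly speaking what you need is the connectedness of the non-archimedean link of a normal surface germ (a consequence of normality) together with the fact that type~1 points are endpoints in the local tree structure, so removing finitely many of them does not disconnect the space. This is standard and does not introduce any circularity with the later Corollary~\ref{corollary_topology}.
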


\begin{rmk}\label{remark_closure_component}
Let $\Y$ and $W$ be as above.
It follows from the first part of the proposition that the closure $\overline W$ of $W$ in $\NL(X,0)$ is obtained by adding to $W$ a subset of $\Div(\Y)$.
Indeed, the union of all the connected components of $\NL(X,0)\setminus \Div(\Y)$ different from $W$, that is the complement of $W\cup \Div(Y)$ in $\NL(X,0)$, is an open subset of $\NL(X,0)$.
\end{rmk}

%Part $(i)$ of the proposition above can be reduced to \cite[Lemma 7.14]{fantini:normspaces} by passing to a resolution of $(X,0)$.
%Part $(ii)$ follows from the argument of \cite[Lemma 9.3]{fantini:normspaces}.
% applied to the formal completion $\Spf\big(\widehat{\cO_{X',p}}\big)$ of $X'$ at $p$. %(or rather, can replace $(X',p)$ with a variety which has the same completion).

\subsection{Existence of formal modifications and resolutions}
\label{subsection_existencemodifications}
We will now explain how to produce formal modifications of $(X,0)$ with prescribed exceptional divisors and how to detect when such a modification is a good resolution of $(X,0)$.
%We conclude the section by explaining how to produce formal modifications of $(X,0)$ with prescribed exceptional divisors and how to detect when such a modification is a resolution of $(X,0)$.

\begin{thm}\label{theorem_existence_modifications}
Let $S$ be a finite subset of divisorial points of $\NL(X,0)$.
Then there exists a formal modification $\Y$ of $(X,0)$ such that $\Div(\Y)=S$.
%Moreover, if $S'$ is another finite set of divisorial valuations, $S\subset S'$, and $\Y$ is the associated formal modification, then there exist a morphism of formal schemes $\Y'\to \Y$ commuting with the morphisms to $(X,0)$.
\end{thm}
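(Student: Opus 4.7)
The plan is to build $\Y$ in two stages: first produce a common good resolution of $(X,0)$ on which every point of $S$ is realized as the generic point of an exceptional component, then formally contract the remaining unwanted components.

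For each divisorial point $s\in S$, Remark~\ref{rmk:divisorial} furnishes a modification $(Y_s,D_s)\to(X,0)$ such that $\cent_{Y_s}(s)$ is the generic point of an irreducible component of $D_s$; composing with a further resolution, we may assume that each $(Y_s,D_s)$ is a good resolution. Taking a common good resolution $\pi\colon(Y,D)\to(X,0)$ dominating all the $(Y_s,D_s)$, every $s\in S$ is realized as the generic point of some irreducible component $E_s$ of $D$ (divisorial valuations are preserved under dominating modifications). Write $D = \bigcup_{s\in S} E_s \cup D'$, where $D'$ is the union of the remaining components of $D$, and decompose $D' = D'_1 \sqcup \cdots \sqcup D'_m$ into its connected components.

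By Mumford's classical theorem, the intersection form of $D$ on $Y$ is negative definite, so its restriction to each $D'_j$ is negative definite as well. I would then invoke a formal contractibility criterion --- the formal/complete-local analogue of Grauert's theorem, which in the two-dimensional setting can be established directly by taking the ring of formal functions on the completion of $Y$ along $D'_j$ --- to contract each $D'_j$ to a single closed point $p_j$. This yields a proper morphism $\rho\colon \widehat{Y/D} \to \Y$ of normal special formal $k$-schemes that is an isomorphism away from $\bigsqcup_j D'_j$, and whose reduction $\Y_0$ has irreducible components exactly the images of the $E_s$.

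To verify that $\Y$ is a formal modification of $(X,0)$ with $\Div(\Y)=S$, note that $\pi$ contracts each $D'_j$ to $\{0\}\subset X$, so the composite $\widehat\pi\colon \widehat{Y/D} \to \X$ factors through $\rho$ as an adic morphism $f\colon \Y \to \X$. Applying Proposition~\ref{proposition_invariance_modifications} to both $\pi$ and to $\rho$ (itself a modification of formal schemes) shows that $f$ induces an isomorphism $\NL(\Y)\stackrel{\sim}{\longrightarrow}\NL(X,0)$. The Cartier property of $\Y\times_\X\{0\}$ is inherited from that of $\widehat{Y/D}\times_\X\{0\}$: any local generator of the maximal ideal at $0\in X$ pulls back to a regular function on $\widehat{Y/D}$ whose divisor is supported on $D$, and this function descends to an invertible ideal at each $p_j$ because $\widehat{\mathcal O_{\Y,p_j}}$ is a two-dimensional normal local domain. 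The main obstacle is the formal contraction step; its content is exactly the equivalence between negative-definiteness of a connected configuration of exceptional curves and its contractibility in the formal category.
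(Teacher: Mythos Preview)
Your proposal is correct and follows essentially the same strategy as the paper: realize $S$ inside the divisorial set of a resolution $(Y,D)\to(X,0)$, observe that the intersection form on $D$ (hence on any subconfiguration) is negative definite, and contract the unwanted components.

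The only substantive difference is in how the contraction is carried out. You work directly in the formal category and appeal to a ``formal/complete-local analogue of Grauert's theorem,'' which you yourself flag as the main obstacle. The paper instead invokes Artin's contractibility criterion \cite{artin:contractibilityalgebraicspaces} to contract the unwanted components of $D$ in the category of \emph{algebraic spaces}, obtaining a normal algebraic space with a proper morphism to $X$, and then simply takes the formal completion along the fiber over $0$. This sidesteps your obstacle entirely: Artin's theorem is a standard, citable black box, and the passage to formal completion is automatic. Your more hands-on verification that $\Y$ is a formal modification (factoring through $\rho$, checking the Cartier condition at the contracted points, etc.) becomes unnecessary once one knows the contracted object is already a normal algebraic space over $X$.
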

\begin{proof}
As it readily follows from the observation made in Remark~\ref{rmk:divisorial}, there exists a resolution $(Y,D)$ of $(X,0)$ such that $S\subset \Div(Y)$. % where $\Y$ is the formal completion of $Y$ along $D$.
The contractibility criterion of Grauert-Artin \cite{artin:contractibilityalgebraicspaces} guarantees that we can contract every component of $D$ which does not correspond to an element of $S$, yielding a normal algebraic space over $k$ with a proper morphism $f$ to $X$.
Indeed, the intersection matrix of the divisor that we want to contract is negative definite because the entire exceptional divisor of $Y$ can be contracted to the point $0$ in $X$.
By taking the formal completion of this algebraic space along $f^{-1}(0)$ we obtain the formal modification $\Y$ that we wanted.
\end{proof}

\begin{rmk}
If we are working over the field of complex numbers, we can apply Grauert contractibility criterion \cite{grauert:ubermodifikationen} instead of Artin's and obtain $\Y$ as a complex analytic space.
Of course this is the same as analytifying the algebraic space given by Artin's criterion.
On the other hand, observe that if $k$ is the algebraic closure of $\mathbb F_p$ for some prime $p$ or if $(X,0)$ is a rational singularity, then the contractibility results \cite[2.3, 2.9]{artin:contractibilitycriterion} grant that $\Y$ is an algebraic variety.
\end{rmk}

Recall that each type 1 point can be seen as the equivalence class of the order of vanishing along an irreducible germ of a formal curve on $(X,0)$.
In particular, with a finite set of type 1 points of $\NL(X,0)$ is associated the germ of a curve on $(X,0)$.

\begin{thm}\label{theorem_characterization_resolutions}
Let $\Y$ be a formal modification of $(X,0)$, let $T$ be a finite set of type 1 points of $\NL(X,0)$ and let $C$ be the germ of curve on $(X,0)$ associated with $T$.
Then $\Y$ can be algebraized by a good resolution of both $(X,0)$ and the germ $C$ if and only if each connected component $V$ of $\NL(X,0)\setminus \Div(\Y)$ has one of the following three forms:
\begin{enumerate}
\item $V$ is a disc and $V\cap T=\emptyset$; 
\item $V$ is an annulus and $V\cap T=\emptyset$; 
\item $V$ is a disc and $V\cap T$ can be taken to be its origin.
\end{enumerate}
\end{thm}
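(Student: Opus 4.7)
The plan is to work locally at each closed point $p$ of $\Y_0$, translating the shape of the connected component $V_p = \cent_\Y^{-1}(p)$ into algebraic information about the complete local ring $\widehat{\cO_{\Y,p}}$ and the ideal $\mc I_p$ defining $\Y_0$ near $p$. The key bridge is Proposition~\ref{proposition_propertiesNL_3}, which identifies $V_p$ with $\NL(\Y_p) \setminus V(\mc I_p)$, where $\Y_p = \Spf(\widehat{\cO_{\Y,p}})$.

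For the direct implication, assume $\Y$ is algebraized by a good resolution $(Y,D) \to (X,0)$ that also resolves $C$. For each closed point $p \in D$, the SNC structure at $p$ together with the transversality of the strict transform of $C$ gives a normal form: either $\widehat{\cO_{Y,p}} \cong k[[x,y]]$ with $\mc I_p = (x)$ (if $p$ lies on a single component of $D$), or $\widehat{\cO_{Y,p}} \cong k[[x_1,x_2]]$ with $\mc I_p = (x_1 x_2)$ (if $p$ is a node of $D$). Lemma~\ref{lemma_disc_annulus}(i) then yields in the first case that $V_p$ is a disc, with $V_p \cap T$ either empty (case (i)) or equal to the single type 1 point $V(y)$ given by the unique transverse branch of $C$ through $p$ (case (iii), since this branch corresponds to the origin of the disc under the identification). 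Lemma~\ref{lemma_disc_annulus}(ii) yields in the second case that $V_p$ is an annulus, and since a good resolution of $C$ has no branch of the strict transform meeting a node of $D$, we are in case (ii).

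For the converse direction, I would show that each condition on $V_p$ forces $\widehat{\cO_{\Y,p}}$ to be regular with the appropriate structure, which then gives algebraizability by a good resolution via \cite[Proposition~7.6]{fantini:normspaces}. By Artin approximation, $\widehat{\cO_{\Y,p}}$ is the completed local ring at a closed point of some normal algebraic surface, so Proposition~\ref{proposition_propertiesNL_1} applies to give $\cO^\circ(\NL(\Y_p)) \cong \widehat{\cO_{\Y,p}}$. A non-archimedean Riemann-type extension argument, based on Proposition~\ref{prop:topo_NL} and the local structure of $\NL(\Y_p)$ near a type 1 point, then shows that the restriction $\cO^\circ(\NL(\Y_p)) \to \cO^\circ(V_p)$ is an isomorphism, so that $\cO^\circ(V_p) \cong \widehat{\cO_{\Y,p}}$. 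On the other hand, a direct computation over $k((t))$ gives $\cO^\circ(\For(\{|T|<1\})) \cong k[[t,T]]$ in the disc case and $\cO^\circ(\For(\{|t|<|T|<1\})) \cong k[[T,T']]$ with $TT'=t$ in the annulus case; both are regular two-dimensional complete local $k$-algebras, with $\mc I_p$ generated by $t$ (disc) or by the product $TT'$ of two regular parameters (annulus). Hence $\Y$ is regular and algebraizes to a resolution $(Y,D)$ of $(X,0)$; the local form of $\mc I_p$ shows that $D$ is SNC, so $(Y,D)$ is a good resolution. Finally, in case (iii), the origin of the disc corresponds under $\widehat{\cO_{\Y,p}} \cong k[[t,T]]$ to the height 1 prime $(T)$, which is transverse to $\mc I_p = (t)$; this forces the branch of $C$ through $p$ to meet $D$ transversally, so $(Y,D)$ resolves $C$ as well.

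The principal obstacle is the Riemann-type extension of bounded analytic functions across the type 1 points of $V(\mc I_p)$; proving this requires a careful analysis of the local structure of the non-archimedean link near these points, as provided by Proposition~\ref{prop:topo_NL} combined with the description of neighborhoods via formal modifications.
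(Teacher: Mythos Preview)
Your proposal is correct and follows essentially the same strategy as the paper: both directions go through Proposition~\ref{proposition_propertiesNL_3} to identify $V_p$ with $\NL(\Y_p)\setminus V(\mc I_p)$, and both recover $\widehat{\cO_{\Y,p}}$ from the link via Proposition~\ref{proposition_propertiesNL_1}, then invoke \cite[Proposition~7.6]{fantini:normspaces} to algebraize. The paper compresses the converse into a single sentence and defers details to \cite{fantini:normspaces}; you have spelled out the mechanism, in particular the extension of bounded functions across the finitely many type~1 points of $V(\mc I_p)$, which is exactly Lemma~\ref{lemma_extension_to_type1} in the paper. One remark on logical order: in the paper that lemma is deduced from Corollary~\ref{corollary_topology}, which in turn rests only on the \emph{forward} direction of the present theorem (via Corollary~\ref{corollary_global_topology}), so your use of the local disc structure near type~1 points to justify the extension is legitimate and not circular---but it is worth making this dependency explicit.
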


Observe that in the statement can take $T$ to be empty, so that there is no curve $C$ and we simply obtain a good resolution of $(X,0)$.
The following proof follows the lines of the proof of Proposition 10.2 of \cite{fantini:normspaces}, where more details are given.

\begin{proof}[Proof]
By Proposition~\ref{proposition_propertiesNL_2} the connected components of $\NL(X,0)\setminus \Div(X)$ are the inverse images through the center map of the closed points of $\Y_0$.
Let $W$ be such a component, let $p$ be a closed point of $\Y_0$, and let $\mathcal I_p$ be the image of the ideal defining $\Y_0$ in $\mathcal O_{\Y,p}$.
By Cohen theorem, $\Y$ is regular at $p$ if and only if $\widehat{\mathcal O_{\Y,p}}\cong k[[x,y]]$, that is if and only if $\NL(\Y_p)\cong \NL(\mathbb A^2_k,0)$.
Moreover, $p$ is a smooth point of $\Y_0$ if and only we can take $\mathcal I_p=(x)$ in the isomorphism above, while $p$ is an ordinary double point of $\Y_0$ if and only if we can take $\mathcal I_p=(xy)$.
Since by Proposition~\ref{proposition_propertiesNL_3} we have a canonical isomorphism $\cent_\Y^{-1}(p)\cong\NL({\Y_p})\setminus V(\mathcal I_p)$, it follows from Proposition~\ref{proposition_propertiesNL_1} that $\Y$ is a (formal) good modification of $(X,0)$ if and only if every connected component of $\NL(X,0)\setminus \Div(\Y)$ is either a disc or an annulus. %\lorenzo{There is one detail that I am sweeping under the rug, namely the fact that being a disc or an annulus does not depend on the choice of an $k((t))$-analytic structure (Corollary 8.9 in \cite{fantini:normspaces}).}
Whenever this is the case, $\Y$ is algebraized by a good resolution $(Y,D)$ of $(X,0)$ by \cite[Proposition 7.6]{fantini:normspaces}.
Finally, by definition $(Y,D)$ is also a good resolution of the germ $C$ if and only if the strict transform of $C$ in $Y$ meets $D$ transversally. 
This means that if $p$ is a point of $D$ contained in the strict transform of $C$ we can find an isomorphism $\widehat{\mathcal O_{Y,p}} \cong k[[x,y]]$ as above with $\mathcal I_p=(x)$, and the strict transform locally defined by $y$ at $p$.
This corresponds precisely to the conditions on $T$ in the statement.
\end{proof}

\subsection{Structure of $\NL(X,0)$}
The structure of the non-archimedean link $\NL(X,0)$ can be described using a resolution of the singularities of the pair $(X,0)$ and the results of the previous section.

\begin{cor}\label{corollary_global_topology}
	Let  $(Y,D)$ be a good resolution of $(X,0)$ and let $y$ be a closed point of $D$.
	If $D$ is smooth (respectively singular) at $y$   then $c_Y^{-1}(y)$ is a disc (resp. an annulus), and the boundary 
	of  $c_Y^{-1}(y)$ in $\NL(X,0)$ consists of one type 2 point (resp. two type 2 points). 
	In particular, $\NL(X,0) \setminus  \Div(Y)$ is a disjoint union of discs and finitely many annuli. 
\end{cor}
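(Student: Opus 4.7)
The plan is to reduce everything to a local computation at $y$ via Proposition~\ref{proposition_propertiesNL_3} applied to the formal modification $\Y = \widehat{Y/D}$, and then read off the topological type of the fiber using Lemma~\ref{lemma_disc_annulus}. Proposition~\ref{proposition_propertiesNL_3} gives a canonical isomorphism $\cent_Y^{-1}(y) \cong \NL(\Y_y) \setminus V(\mathcal I_y)$, where $\Y_y = \Spf(\widehat{\cO_{Y,y}})$ and $\mathcal I_y$ is the image in $\widehat{\cO_{Y,y}}$ of the ideal defining $D$. Because $(Y,D)$ is a good resolution, $Y$ is regular at the closed point $y$, so Cohen's structure theorem yields $\widehat{\cO_{Y,y}}\cong k[[u,v]]$ and hence $\NL(\Y_y) \cong \NL(\A^2_k,0)$.

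Next, I would treat the two cases separately. If $D$ is smooth at $y$, then $D$ is locally the zero locus of a single coordinate and we may take $\mathcal I_y = (u)$; Lemma~\ref{lemma_disc_annulus}(i) then identifies $\cent_Y^{-1}(y)$ with a disc. If on the other hand $D$ is singular at $y$, the good resolution hypothesis forces $y$ to be an ordinary double point of $D$, so after adjusting coordinates we may take $\mathcal I_y = (uv)$; Lemma~\ref{lemma_disc_annulus}(ii) now identifies $\cent_Y^{-1}(y)$ with an annulus.

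For the boundary statement I would combine Remark~\ref{remark_closure_component} with the topological description of discs and annuli in Theorem~\ref{thm:R-tree}. By the remark, the closure of $\cent_Y^{-1}(y)$ in $\NL(X,0)$ picks up only divisorial points of $\Div(Y)$, which are automatically of type 2. The main obstacle is matching the count: by Theorem~\ref{thm:R-tree}, a disc has exactly one end and an annulus has exactly two, so the boundary set in $\NL(X,0)$ consists of at most one, respectively at most two, points. To see equality, I would argue that the divisorial points $\ord_{E}$ for $E$ an irreducible component of $D$ passing through $y$ actually lie in the closure of $\cent_Y^{-1}(y)$: one can exhibit explicit one-parameter families of monomial valuations in the local coordinates $(u,v)$ whose centers on $Y$ equal $y$ but that converge to $\ord_E$ as the weight on the defining equation of $E$ degenerates, and under the isomorphism with $\For(D)$ or $\For(A)$ these families correspond precisely to the $x_r$ curves appearing in Theorem~\ref{thm:R-tree}.

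Finally, the disjoint union statement follows formally: Proposition~\ref{proposition_propertiesNL_2} gives a bijection between the closed points of $D$ (the reduction $\Y_0$) and the connected components of $\NL(X,0)\setminus \Div(Y)$, so the previous analysis partitions this complement into one disc for every smooth closed point of $D$ and one annulus for every nodal closed point. Since $D$ is a reduced SNC curve with finitely many components, its singular locus consists of finitely many nodes, so only finitely many annuli occur, while infinitely many discs may appear.
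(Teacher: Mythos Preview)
Your proof is correct and follows essentially the same route as the paper: reduce to the formal local ring via Proposition~\ref{proposition_propertiesNL_3}, identify $\widehat{\cO_{Y,y}}\cong k[[u,v]]$ by Cohen, invoke Lemma~\ref{lemma_disc_annulus} in the two cases, and conclude the disjoint-union statement from Proposition~\ref{proposition_propertiesNL_2}. The only notable difference is that you give a fuller argument for the exact boundary count (bounding above via the number of ends in Theorem~\ref{thm:R-tree} and then exhibiting monomial valuations converging to each $\ord_E$), whereas the paper simply appeals to Remark~\ref{remark_closure_component} and leaves the matching of ends with the components of $D$ through $y$ implicit; your extra detail is a welcome clarification rather than a different method.
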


\begin{proof}
	As above, the space $Y$ is smooth at $y$, so by Cohen theorem we can find an isomorphism $\widehat{\mathcal O_{Y,y}} \cong k[[U,V]] \cong \widehat{\mathcal O_{\mathbb A^2_k,0}}$ such that a local equation for $D$ at $y$ is either $U$ (if $y$ is a smooth point of $D$) or $UV$ (if $y$ is a double point of $D$).
	Since $\NL\big(\Y_{y}\big)\cong\NL\big(\mathbb A^2_k,0\big)$, Proposition~\ref{proposition_propertiesNL_3} and Lemma~\ref{lemma_disc_annulus} imply that $\cent_Y^{-1}\big(y \big)$ is isomorphic to an open $k((t))$-analytic disc in the first case and an annulus in the second case, where the $k((t))$-analytic structure is defined by sending $t$ to a local equation of $D$ at $y$.
	The fact that the boundary $\cent_Y^{-1}(y)$ consists of one or two points follows from Remark~\ref{remark_closure_component}. % (recall that our definition of good resolution requires each irreducible component of $D$ to be smooth). 
	The last statement is now a consequence of Proposition~\ref{proposition_propertiesNL_2}.
	%since the points $x\in\NL(X,0)$ for which $\cent_Y(x)$ is not closed are the divisorial points associated with the irreducible components of $D$, that is the points of $\Div(Y)$; and the set of closed points $y\in D$ for which $D$ is analytically reducible is finite.
\end{proof}

%From Propositions~\ref{prop:topo_NL} and~\ref{proposition_propertiesNL} w
We can now deduce some results about the type 1 points of $\NL(X,0)$.

\begin{cor}\label{corollary_topology}
	Any type 1 point $x$ of $\NL(X,0)$ admits a basis of neighborhoods consisting of discs centered in $x$.
	Moreover, the set of type 1 points is dense in $\NL(X,0)$.
\end{cor}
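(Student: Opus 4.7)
The plan is to combine the neighborhood basis of Proposition~\ref{prop:topo_NL} with the structural results of the section, notably Corollary~\ref{corollary_global_topology} and Theorem~\ref{theorem_characterization_resolutions}.

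For the first statement, I fix a type 1 point $x$ and denote by $C$ the irreducible formal germ of curve on $(X,0)$ associated with $x$. Given an open neighborhood $U$ of $x$, since $x$ is not divisorial its center on any modification of $(X,0)$ is a closed point, so Proposition~\ref{prop:topo_NL} produces a modification $(Y_0, D_0)$ of $(X,0)$ with $\cent_{Y_0}^{-1}\bigl(\cent_{Y_0}(x)\bigr) \subset U$. I then refine this modification to a good resolution $(Y,D)$ of both $(X,0)$ and $C$ (first resolving the surface singularity, then blowing up at the intersection points of the strict transform $\widetilde{C}$ with the exceptional divisor until transversality is achieved). Setting $y = \cent_Y(x)$, the refinement map $Y \to Y_0$ sends $y$ to $\cent_{Y_0}(x)$, whence $V := \cent_{Y}^{-1}(y) \subset \cent_{Y_0}^{-1}\bigl(\cent_{Y_0}(x)\bigr) \subset U$. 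By the good resolution of $C$, the point $y$ is a smooth point of $D$ lying on $\widetilde{C}$, so Corollary~\ref{corollary_global_topology} identifies $V$ with an open $k((t))$-analytic disc; choosing local coordinates $(u,v)$ at $y$ with $D = \{u=0\}$ and $\widetilde{C} = \{v=0\}$ exhibits $V$ as the open unit disc in the variable $v$ over $k((u))$, and the type 1 point $x$, being attached to the germ $\widetilde{C}$, corresponds precisely to the origin $v=0$ of this disc.

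For the density statement, let $U \subset \NL(X,0)$ be non-empty and open, pick $x \in U$, apply Proposition~\ref{prop:topo_NL}, and refine the resulting modification to a good resolution $(Y,D)$ of $(X,0)$, so that $\cent_{Y}^{-1}\bigl(\overline{\cent_{Y}(x)}\bigr) \subset U$. If $\cent_{Y}(x)$ is a closed point, then $\cent_{Y}^{-1}\bigl(\cent_{Y}(x)\bigr) \subset U$ is a disc or an annulus by Corollary~\ref{corollary_global_topology}; otherwise $\cent_{Y}(x)$ is the generic point of a component $E$ of $D$, and for any smooth closed point $y$ of $D$ on $E$ the set $\cent_{Y}^{-1}(y) \subset U$ is again a disc. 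In either case $U$ contains an open subspace isomorphic, as a ringed space in $k$-algebras, to a $k((t))$-analytic disc or annulus, and since rigid points are dense in such Berkovich curves this produces the sought type 1 point of $U$.

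The main subtlety will be the identification of $x$ with the origin of the disc $V$ in the first part, which requires unwinding the $k((t))$-analytic structure coming from Proposition~\ref{proposition_propertiesNL_3} and recognising $x$ as the valuation of order of vanishing along $\widetilde{C}$; the rest reduces to a direct application of the structural results already established in this section.
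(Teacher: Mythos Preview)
Your proposal is correct and follows essentially the same approach as the paper: both arguments refine the neighborhood basis from Proposition~\ref{prop:topo_NL} to good resolutions that also resolve the curve germ $C$ attached to $x$, then read off the disc structure via local coordinates $(u,v)$ with $D=\{u=0\}$ and $\widetilde{C}=\{v=0\}$; for density, both reduce to the fact that every open set contains a disc or annulus and that rigid points are dense there. The only cosmetic difference is that the paper invokes cofinality of good resolutions and the non-isolation of divisorial points (Remark~\ref{remark_closure_component}) rather than your explicit case split on whether $\cent_Y(x)$ is closed or generic, but the content is the same.
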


\begin{proof}
	By Proposition~\ref{prop:topo_NL} a basis of neighborhoods of $x$ consists of the open subsets of $\NL(X,0)$ of the form $\big\{\cent_Y^{-1}\big({\overline{\cent_Y(x)}}\big)\big\}$, for $(Y,D)$ ranging among the good resolutions of $(X,0)$, since the family of good resolutions is cofinal among the partially ordered set of modifications of $(X,0)$.
	Since $x$ is not of type $2$, $\cent_Y(x)$ is a closed point of $D$, and we can find an isomorphism $\widehat{\mathcal O_{Y,y}} \cong k[[U,V]] \cong \widehat{\mathcal O_{\mathbb A^2_k,0}}$ such that $U$ is a local equation for $D$ at $y$ and $V$ defines the germ of curve at $y$ associated to the type one point $x$.
	This shows that $\cent_Y^{-1}\big(\overline{\cent_Y(x)}\big)=\cent_Y^{-1}\big(\cent_Y(x)\big)$ is an open $k((t))$-analytic disc centered in $x$.
	
	It remains to prove the density of the set of type 1 points of $\NL(X,0)$.
	As noted in Remark~\ref{remark_closure_component}, each of the divisorial points associated with a good resolution of $(X,0)$ is not isolated in $\NL(X,0)$.
	The result then follows from Corollary~\ref{corollary_global_topology}, Lemma~\ref{lemma_disc_annulus}, and the fact that the set of its points of type 1 is dense in a $k((t))$-analytic annulus $A=\big\{x\in \A^{1,\mathrm{an}}_{k((t))} \,\big|\, |t|_{k((t))}<|T(x)|<1\big\}$. 
	The latter is a classical fact that can be proven directly by exhibiting suitable type 1 points; as an example, the semi-norm that sends an element $P$ of $k((t))[T]$ to $|P(x)| = |P(t^{1/2})|_{k((t))}$ is a type 1 point $x$ of $A$.
\end{proof}

\begin{rmk}
	It is also true that each of the sets consisting of type 2, type 3 or type 4 points is dense in $\NL(X,0)$, but we will not need this fact. 
\end{rmk}

Any bounded analytic function on the complement of finitely many type 1 points extends to $\NL(X,0)$. 
This follows from the fact that $\NL(X,0)$ is locally a $k((t))$-analytic space combined with the more general result \cite[Proposition 3.3.14]{berkovich:book}.
However, we include a proof here since it is simpler to deduce the result in our very special case from Corollary~\ref{corollary_topology}.
%More precisely, we prove the following result.

\begin{lem}\label{lemma_extension_to_type1}
	Let $U$ be an open subset of $\NL(X,0)$ and let $S$ be a finite subset of $U$ consisting of type $1$ points.
	Then the inclusion $U\setminus S\hookrightarrow U$ induces an isomorphism
	\[
	\mathcal O_{\NL(X,0)}^\circ \big( U \big) \cong \mathcal O_{\NL(X,0)}^\circ \big( U\setminus S \big).
	\]
\end{lem}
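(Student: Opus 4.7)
The plan is to prove the injectivity and surjectivity of the restriction map $\mathcal O^\circ(U) \to \mathcal O^\circ(U\setminus S)$ separately.

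\emph{Injectivity} will follow from continuity of analytic functions combined with density of $U \setminus S$ in $U$. Density holds because, by Corollary~\ref{corollary_topology}, each $x \in S$ admits a basis of disc-shaped neighborhoods centered at $x$; in any such disc $V \cong \For(D)$ with $D = \{|T| < 1\}$ the open unit $k((t))$-analytic disc and $x$ corresponding to the rigid point $T = 0$, the points $x_r$ of Theorem~\ref{thm:R-tree} converge to $x$ as $r \to 0^+$, so that $x$ is non-isolated in $U$.

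For \emph{surjectivity}, a straightforward induction on $|S|$ combined with the sheaf property of $\mathcal O^\circ$ and the uniqueness just established reduces the problem to the following local model case: given an open $V \cong \For(D)$ with $D$ the open unit $k((t))$-disc and $x$ its center, every $f \in \mathcal O^\circ(V \setminus \{x\})$ extends to $\mathcal O^\circ(V)$. In this model case I would first identify $\mathcal O^\circ(V)$ with the formal power series ring $k[[t]][[T]]$: for a series $\sum_{n \geq 0} a_n T^n$ with $a_n \in k((t))$, boundedness by $1$ on $D$ is equivalent (via the Gauss norm formula $|f(x_r)| = \sup_n |a_n| r^n$) to $|a_n| \leq 1$ for all $n$, that is, $a_n \in k[[t]]$, and this automatically ensures convergence on $D$. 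Next, exhausting $D \setminus \{x\}$ by the standard annuli $\{s < |T| < 1\}$ as $s \to 0^+$ and patching the Laurent expansions on each of them, any element $f \in \mathcal O(D \setminus \{x\})$ admits a unique Laurent expansion $f = \sum_{n \in \mathbb Z} a_n T^n$ with $a_n \in k((t))$, convergent on $\{0 < |T| < 1\}$. Evaluating at $x_r$ yields $|f(x_r)| = \sup_n |a_n| r^n \leq 1$ for every $r \in (0, 1)$. Letting $r \to 0^+$ on the terms with $n < 0$ forces $a_n = 0$ (otherwise $|a_n| r^n$ would blow up); letting $r \to 1^-$ on the terms with $n \geq 0$ forces $|a_n| \leq 1$, i.e., $a_n \in k[[t]]$. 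The truncated series $\tilde f = \sum_{n \geq 0} a_n T^n$ then lies in $\mathcal O^\circ(V)$ and restricts to $f$, providing the required extension.

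The \emph{main obstacle} I foresee is justifying the Laurent expansion on the punctured disc $D \setminus \{x\}$, which is not itself a standard $k((t))$-analytic annulus; the argument must proceed through the exhaustion by annuli $\{s < |T| < 1\}$ and invoke the classical Laurent expansion on each of them, then verify compatibility of the coefficients as $s \to 0^+$. Once this is in place, the remainder of the argument consists only of routine manipulations of Gauss norms, and the sheaf-theoretic reduction is immediate.
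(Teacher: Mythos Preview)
Your proof is correct and follows essentially the same route as the paper: reduce by the sheaf property and Corollary~\ref{corollary_topology} to the case where $U$ is a disc centred at a single type~1 point $x$, then identify the bounded functions on the punctured disc explicitly. The paper states this identification in one line, asserting that $\mathcal O^\circ(U\setminus\{x\})\cong k((t))[[T]]$ and that all such series extend across $x=V(T)$; your Laurent-expansion argument is simply a detailed justification of that assertion (and your $k[[t]][[T]]$ is the correct ring --- the paper's $k((t))[[T]]$ appears to be a typo, as one sees already from Proposition~\ref{proposition_propertiesNL_1} applied to $\NL(\A^2_k,0)$). Your separate treatment of injectivity via density is not needed once the ring is computed on both sides, but it is harmless.
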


\begin{proof}
	Since $\mathcal O^\circ_{NL(X,0)}$ is a sheaf it is enough to prove the result for $T=\{x\}$ a single type $1$ point, and $U$ some neighborhood of $x$ in $\NL(X,0)$.
	Hence, by Corollary~\ref{corollary_topology} we can assume without loss of generality that $U$ is a disc centered in $x$.
	Then the ring of bounded analytic functions on $U\setminus\{x\}$ is isomorphic to $k((t))[[T]]$, where $T$ is a coordinate function on $U$, and all such functions extend to $x=V(T)$.
\end{proof}

The results of this section can also be used to deduce a characterization of the \emph{essential valuations} of a surface singularities, as appearing in the Nash problem (see \cite[Theorems 10.4 and 10.8]{fantini:normspaces}), and to give another proof of the existence of resolutions for surfaces (see \cite[Theorem 8.6]{FantiniTurchetti2017}).

\begin{comment}
\begin{rmk}
Observe that one can easily deduce Lemma~\ref{lemma:topology_NL_center} and Corollary~\ref{corollary_topology} from Theorem~\ref{theorem_characterization_resolutions}.
The latter does not rely on them, since Lemma~\ref{lemma_extension_to_type1} can be proven without Lemma~\ref{lemma:topology_NL_center}, as is explained before its statement.
Moreover, one can also deduce that the analogous result as Lemma~\ref{lemma:topology_NL_center} holds for divisorial poins as well: if $x\in\NL(X,0)$ is arbitrary, then it has as a basis of neighborhoods the set $\mathrm c_Y^{-1}\big(\overline{\mathrm c_Y(x)}\big)$, for $(Y,D)$ ranging over all the modifications of $(X,0)$. \lorenzo{Should we remove this? Or keep the 2nd half and move?}
\end{rmk}
\end{comment}

%%%%%%%%%%%%%%%%%%%%%%%%%%%%%
%%%%%%%%%%%%%%%%%%%%%%%%%%%%%%%%%%%%%%%%%%%%%%%%%%%%%%%%%%%%%%%%%%%%%%%%%%%%%%%%%%%%
%%%%%%%%%%%%%%%%%%%%%%%%%%%%%%%%%%%%%%%%%%%%%%%%%%%%%%%%%%%%%%%%%%%%%%%%%%%%%%%%%%%%
%%%%%%%%%%%%%%%%%%%%%%%%%%%%%%%%%%%%%%%%%%%%%%%%%%%%%%%%%%%%%%%%%%%%%%%%%%%%%%%%%%%%

% \newpage
\section{Self-similarity of sandwiched singularities}\label{section_preliminariessandwiched}

In this section we introduce sandwiched singularities and prove the implications \ref{condition_sandwiched} $\implies$ \ref{condition_strongly_selfsim} $\implies$ \ref{condition_valtree}   $\implies$ \ref{condition_selfsim} of Theorem~\ref{mainthm}.

\subsection{Sandwiched singularities}\label{ssec:sandwich}

\begin{defi}\label{def-sandwich}
Let $\mathcal O$ be a normal 2-dimensional complete local ring with algebraically closed residue field $k$.
We say that $\mathcal O$ is \emph{sandwiched} if there exist two algebraic surfaces $X_0$ and $Y$ over $k$, with $X_0$ smooth over $k$ and $Y$ normal, a proper birational morphism $Y\to X_0$, and a point $y$ in $Y$ such that $\mathcal O\cong\widehat{\mathcal O_{Y,y}}$.
If $X$ is an algebraic surface over $k$ and $0$ is a normal point of $X$, we say that $(X,0)$ is a sandwiched singularity if the complete local ring $\widehat{\mathcal O_{X,0}}$ of $X$ in $0$ is sandwiched.
\end{defi}

Note that when working over $\C$ our definition is equivalent to \cite[Definition II.1.1]{spivakovsky:sandsingdesingsurfNashtransf} thanks to Artin's approximation theorem \cite[Corollary 1.6]{artin:solaneq}. %\footnote{Can also see Theorem 4.2 in the expository paper \cite{camacho-movasati:nbhdsanvar}.}.

\medskip

Fix  any smooth algebraic surface  $X_0$ over $k$, such as for example $X_0 = \A^2_k$. 
Let $p$ be a point of $X_0$, let $\varphi\colon Y\to X_0$ be a sequence of point blowups centered above $p$, and let $E$ be a connected divisor on $Y$ obtained by removing some irreducible components from $\varphi^{-1}(p)$.
Since the divisor $\varphi^{-1}(p)$ can be contracted to $(X_0,p)$ and $X_0$ is smooth, Artin's contractibility criterion \cite[Theorem 2.3]{artin:contractibilitycriterion} ensures that the divisor $E$ can be contracted to a point $0$ in a normal algebraic variety $X$. 
The normal singularity $(X,0)$ is sandwiched, and any sandwiched surface singularities is isomorphic \'etale locally (or analytically, if $k=\C$) to such a singularity. 

\begin{rmk}\label{rmk:sandwichoverC}
Suppose $(X,0)$ is sandwiched. 
Then in the category of analytic spaces over $k$, or in the complex analytic category when working over $\mathbb C$, one can always find a morphism $X\to X_0$ to a smooth surface. 
If $\mu\colon Y\to X$ is a resolution of $(X,0)$, we get back $(X,0)$ by contracting the divisor $\mu^{-1}(0)$ of $Y$.
This explains the terminology, as $X$ is sandwiched between the smooth surfaces $X_0$ and $Y$.
%
%Indeed, as noted in \cite[Remark 1.12]{spivakovsky:sandsingdesingsurfNashtransf}, if $(X,0)$ is sandwiched, $X\to X_0$ is a morphism to a smooth surface, and $\mu\colon Y\to X$ is a resolution of $(X,0)$, we get back $(X,0)$ by contracting the divisor $\mu^{-1}(0)$ of $Y$.
%This explains the terminology, as $X$ is sandwiched between the smooth surfaces $X_0$ and $Y$.
\end{rmk}

We start by proving a stronger form of the implication \ref{condition_sandwiched} $\implies$ \ref{condition_valtree}.

\begin{lem}\label{lem:structure sandw}
Suppose $(X,p)$ is a sandwiched singularity and $y$ is a point of $\NL(\A^2_k,0)$.
Then there exists  a finite set $T$ of type 1 points in $\NL(X,p)$ such that $\NL(X,p)\setminus T$ is isomorphic 
to an open subset of $\NL(\A^2_k,0)$ containing $y$. 
\end{lem}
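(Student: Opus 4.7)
The plan is to adapt the sandwiched presentation of $(X,p)$ to the given point $y$, so that the resulting open embedding of $\NL(X,p)$ into $\NL(\A^2_k,0)$ contains $y$.

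Since $(X,p)$ is sandwiched, the description following Definition~\ref{def-sandwich} realizes it as follows: perform a sequence of point blowups $\varphi_0 : Y_0 \to \A^2_k$ above $0$, with exceptional divisor $D_0 = \varphi_0^{-1}(0)$, then contract a connected subdivisor $E_0 \subset D_0$ to obtain $(X,p)$ with $p$ the image of $E_0$. The modifications $(Y_0, D_0) \to (\A^2_k, 0)$ and $(Y_0, E_0) \to (X,p)$ yield, via Proposition~\ref{proposition_invariance_modifications}, canonical isomorphisms $\NL(Y_0, D_0) \cong \NL(\A^2_k, 0)$ and $\NL(X,p) \cong \NL(Y_0, E_0)$. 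Since $E_0$ is closed in $D_0$ and the center map is anticontinuous, $\NL(Y_0, E_0) = \cent_{Y_0}^{-1}(E_0)$ is open in $\NL(Y_0, D_0)$, so composing these identifications yields an open embedding $\iota_0 : \NL(X,p) \hookrightarrow \NL(\A^2_k,0)$ with image $\cent_{Y_0}^{-1}(E_0)$.

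To place $y$ in the image we modify the presentation. By Proposition~\ref{prop:topo_NL} we first choose a formal modification $\pi : W \to \A^2_k$ by point blowups above $0$ such that the center $w_0$ of $y$ in $W$ is a closed point of $\pi^{-1}(0)$; when $y$ is divisorial, we additionally arrange that its associated divisor appears as an exceptional component of $\pi$ passing through $w_0$. As $W$ is regular at $w_0$ we have $\widehat{\mathcal O_{W,w_0}} \cong k[[u,v]] \cong \widehat{\mathcal O_{\A^2_k,0}}$, so we may replay $\varphi_0$ at $w_0$: we perform a sequence of point blowups $\varphi : Y \to W$ of the same combinatorial type as $\varphi_0$, producing an exceptional divisor $D = \varphi^{-1}(w_0)$ that contains a subdivisor $E$ of the same type as $E_0$. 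Because the sandwich construction depends only on the formal-local structure of the base point, contracting $E$ in $Y$ yields a surface $X'$ with a singular point $p'$ satisfying $\widehat{\mathcal O_{X',p'}} \cong \widehat{\mathcal O_{X,p}}$. Crucially, at each stage of $\varphi$ we are free to choose which point on the current exceptional component to blow up next (subject only to the incidence constraints of $\varphi_0$), and this freedom allows us to steer the center of $y$ in $Y$ onto $E$.

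Viewing $X'$ as a sandwiched singularity over $\A^2_k$ via the composition $\pi \circ \varphi$, applying the construction of the first paragraph yields an open embedding $\iota : \NL(X,p) \cong \NL(X',p') \hookrightarrow \NL(\A^2_k,0)$ whose image is $\cent_Y^{-1}(E)$. By the alignment above, $y \in \cent_Y^{-1}(E)$. Taking $T$ to be the finite set of type 1 points of $\NL(X,p)$ corresponding, under $\iota$, to the branches at $p'$ of the strict transforms in $Y$ of the components of $\pi^{-1}(0)$ and of $D \setminus E$ that pass through $p'$, we obtain an isomorphism $\NL(X,p) \setminus T \cong \cent_Y^{-1}(E)$ onto an open subspace of $\NL(\A^2_k,0)$ containing $y$, as required.

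The main technical obstacle is the alignment step: verifying that by suitable choices of blowup centers one can route the valuative path of $y$ into the fixed combinatorial shape of $E$. For non-divisorial $y$ one uses that blowing up an exceptional component at a closed point distinct from $y$'s center leaves $y$'s center unchanged on that component, allowing one to ``wait out'' blowups occurring away from $y$; for divisorial $y$ the preliminary modification $\pi$ is used to place $y$'s divisor inside the tree. This combinatorial flexibility is the same phenomenon underlying the self-similarity of weighted dual graphs that is studied systematically in Section~\ref{section_proof_main}.
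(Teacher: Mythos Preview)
Your strategy differs from the paper's, and the step you yourself flag as ``the main technical obstacle'' is a genuine gap that the paper's argument sidesteps entirely.

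The paper does not replay the construction at a new base point. It fixes one presentation $Y \to X \to \A^2_k$ and uses \cite[Corollary 1.14]{spivakovsky:sandsingdesingsurfNashtransf} to arrange that every component of $\varphi^{-1}(0)$ not in $E$ is a $(-1)$-curve. Since the map from $Y$ to the single blowup $\varpi\colon Y_1\to\A^2_k$ is not an isomorphism, the strict transform $E_0\subset Y$ of $\varpi^{-1}(0)$ has self-intersection $\le -2$ and hence lies in $E$. Consequently the open set $U=\cent_Y^{-1}(E)\subset\NL(\A^2_k,0)$ contains every point whose center on $Y_1$ avoids the finite indeterminacy locus of $Y_1\dashrightarrow Y$. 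If $y\notin U$, one acts by some $g\in\GL(2,k)$: the induced automorphism $g_\bullet$ of $\NL(\A^2_k,0)$ lifts to $Y_1$ and moves $\cent_{Y_1}(y)$ transitively along $\varpi^{-1}(0)\cong\P^1$, so one pushes it off the indeterminacy locus, obtaining $g_\bullet(y)\in U$; then $g_\bullet^{-1}(U)$ is the required open set containing $y$.

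Against this, your argument has two concrete problems. First, your divisorial case is inconsistent: if the divisor associated with $y$ already appears as a component of $\pi^{-1}(0)$, then $\cent_W(y)$ is its generic point, not a closed point $w_0$, so you cannot simultaneously demand that $w_0$ be closed and that the divisor pass through it. Second, and more seriously, your alignment sketch silently assumes that the contracted locus $E$ contains a component onto which every valuation can be steered --- in effect, the strict transform of the first exceptional. You neither state nor prove this; it is precisely what the paper extracts from Spivakovsky's corollary. Without it there is no guarantee that the center of $y$, however you route it, lands in $E$ rather than in $D\setminus E$. Your replaying idea can likely be completed for non-divisorial $y$ once this ingredient is added, but the $\GL(2,k)$ trick handles all types uniformly with no case analysis and no need to rebuild the presentation.

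A minor point: in your first paragraph the identification $\NL(Y_0,E_0)=\cent_{Y_0}^{-1}(E_0)$ is off by finitely many type~1 points (the branches of $D_0\setminus E_0$ through $E_0$); you correct this later when defining $T$, but the initial claim that $\iota_0$ embeds all of $\NL(X,p)$ is not accurate.
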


\begin{proof}
We may assume that $\cO_{X,p}$ is not a regular ring.

Recall that $\NL(X,p)$ only depends on the formal completion of its local ring. We may thus assume 
that there exist a proper birational map from a smooth algebraic variety $Y$ to the affine plane $\varphi \colon Y \to \A^2_k$ which is an isomorphism outside $\varphi^{-1}(0)$,
and a connected divisor $E \subset \varphi^{-1}(0)$ such that $X$ is the surface obtained from $Y$ by contracting $E$ to a point. 
We denote by $\mu \colon Y \to X$ the contraction map
so that $p= \mu(E)$. 
Note that $\varphi$ factors through $\mu$, so there exists a regular birational map
$\pi\colon X \to \A^2_k$ mapping $p$ to $0$. 

It follows from \cite[Corollary 1.14]{spivakovsky:sandsingdesingsurfNashtransf} that we may further impose that any
irreducible component of $\varphi^{-1}(0)$ that is not included in $E$ has self-intersection $-1$. 
Since $Y$ is regular, $\varphi$ is a sequence of point blow-ups hence factors through the blow-up of the origin $\varpi\colon Y_1 \to \A^2_k$.  
The map $Y \to Y_1$ is not an isomorphism, therefore the strict transform $E_0$ of $\varpi^{-1}(0)$ in $Y$ has self-intersection at most $-2$, hence is included in $E$.

Let $T$ be the zero locus in $\NL(X,0)$ of the ideal defining $\pi^{-1}(0)$ around $p$ and let $\Y$ be the formal completion of $X$ along $\pi^{-1}(0)$.
By Proposition~\ref{proposition_propertiesNL_3} $T$ is a finite set of type 1 points of $\NL(X,0)$, and $\pi$ induces an isomorphism between $\NL(X,p) \setminus T$ and the open subset $U$ of $\NL(\A^2_k,0)$ consisting
of the points whose center in $X$ is equal to $p$. 
Note that, by the commutativity of the center maps, $U$ is also equal to the set of points of $NL(X,p)$ whose center on $Y$ is contained in $E=\mu^{-1}(p)$. 

If $U$ contains $y$ there is nothing left to prove.
If this is not the case, observe that the linear group $\GL(2,k)$ acts naturally on $\NL(\A^2_k,0)$ in such a way that any element $g\in \GL(2,k)$ defines an isomorphism of non-archimedean links $g_\bullet \colon \NL(\A^2_k,0) \to \NL(\A^2_k,0) $. 
The linear map $g$ also lifts to an automorphism $L_g \colon Y_1\to Y_1 $ whose action on the exceptional divisor $\varpi^{-1}(0)$ is given by the projectivization of $g$, and satisfies $\mathrm{c}_{Y_1} (g_\bullet (y)) = L_g( \mathrm{c}_{Y_1}(y))$. 
We may thus find $g\in \GL(2,k)$ such that  $ L_g( \mathrm{c}_{Y_1}(y))$, and hence $\mathrm{c}_{Y_1} (g_\bullet (y))$, does not belong to the indeterminacy locus of
the birational map $  \varphi^{-1} \circ \varpi \colon Y_1 \to Y$.
In particular, the single irreducible component of $\varphi^{-1}(0)$ containing the point $\mathrm{c}_{Y} (g_\bullet (y))$  is $E_0$, so that  $\mathrm{c}_{Y}(g_\bullet (y))\in E$, and $g_\bullet (y)$ belongs to $U$. 

It follows that the open subset $g_\bullet^{-1} (U)$ in $\NL(\A^2_k,0)$, which is isomorphic to $\NL(X,p) \setminus T$ because $U$ is and $g_\bullet$ is an isomorphism, contains $y$ as required.
This concludes the proof of the Lemma.
\end{proof}

\subsection{The implication \ref{condition_sandwiched} $\implies$ \ref{condition_strongly_selfsim}}
Pick any point $x\in \NL(X,0)$ which is not of type 2 and an open set $U$ containing $x$. 
By Proposition~\ref{prop:topo_NL} one may choose a good resolution $Y \to X$ and a point $p$ on its exceptional divisor
such that $x\in \mathrm{c}_Y^{-1}(p)\subset U$.

Proposition~\ref{proposition_propertiesNL} (ii) implies that the good resolution $Y \to X$ induces an isomorphism $\mathrm{c}_Y^{-1}(p) \cong \NL(\Y_p) \setminus V(\mathcal{J}_p)$. Observe that the latter non-archimedean link is isomorphic to $\NL(\A^2_k,0) \setminus S$ where $S$ is a set of type 1 points of cardinality $1$ or $2$ depending on whether $p$ is a smooth point of the exceptional divisor or not. Denote by $y$ the image of $x$ under this isomorphism.

By Lemma~\ref{lem:structure sandw} there exists a finite set $T$ of type 1 points and an isomorphism of non-archimedean links $\psi$ between  $\NL(X,0)\setminus T$  and  an open subset of $\NL(\A^2_k,0)$ containing $y$. 
Adding $\psi^{-1}(S)$ to $T$ if necessary we may suppose that $\psi(\NL(X,0)\setminus T) \subset  \NL(\A^2_k,0)\setminus S$. 

We conclude observing that $\varphi^{-1} \circ \psi$ is an isomorphism mapping $\NL(X,0)\setminus T$ to $\mathrm{c}_Y^{-1}(p)$ and containing $x$. \hfill$\qed$

\subsection{The implication  \ref{condition_strongly_selfsim} $\implies$ \ref{condition_valtree}}
By Corollary~\ref{corollary_topology}, one can find a disc $D$ in $\NL(X,0)$. By~\ref{condition_strongly_selfsim}, there exists a finite set $T$ of type 1 points and an open subset of $D$
which is isomorphic to $\NL(X,0)\setminus T$. By Lemma~\ref{lemma_disc_annulus}, $D$ can be realized as an open subset of $\NL(\A^2_k,0)$.
\hfill$\qed$

\subsection{The implication \ref{condition_valtree} $\implies$ \ref{condition_selfsim}}
Let $V$ be any open subset of $\NL(X,0)$.
By Corollary~\ref{corollary_topology}, $V$ contains a point of type 1, and therefore it contains a disc $D$ by the same corollary.
Now fix a finite set $T$ of type 1 points in $\NL(X,0)$ such that condition \ref{condition_valtree} holds. 
Adding one more point to $T$ if necessary, one may assume that 
$\NL(X,0)\setminus T$ is isomorphic to an open subset of $\NL(\A^2_k,0)\setminus V(t)$, which is a disc by Lemma~\ref{lemma_disc_annulus}.
We conclude that $\NL(X,0)\setminus T$  can be realized as an open subset of $D\subset V$. 
\hfill$\qed$

%%%%%%%%%%%%%%%%%%%%%%%%%%%%%%%%%%%%%%%%%%%%%%%%%%%%%%%%%%%%%%%

% \newpage

\section{Self-similar graphs}\label{section_graphs}
In this section we introduce the notions of self-similar and sandwiched graphs
and prove that these notions are in fact equivalent. 

\subsection{Modifications of graphs}
Let us introduce some terminology first. A \emph{graph} $\Gamma$ is the data of a finite set of vertices $V= V(\Gamma)$ and a subset $E= E(\Gamma)$ of pairs in $V$
(the set of edges). In particular our graph has no loop. Two vertices are said to be connected (or joined) by an edge when $\edge{v_1}{v_2}\in E$.
In that case, we write $v_1 \sim v_2$.
A graph is connected  if  for any two vertices $v_0, v_1$ there exists a sequence of edges
$\edge{v_0}{w_1}, \edge{w_1}{w_2}, \ldots, \edge{w_n}{v_1}$ joining them.  A \emph{weighted graph} is a graph together with a function 
$V\to \N\times \N^*$. We shall write this function as $v\mapsto (g(v),e(v))$. 

The \emph{link} of a vertex $v$ of a graph $\Gamma$ is by definition the set $L(v)$ of vertices $w$ such that $\edge{v}{w}$ is an edge. 
The cardinality of $L(v)$ is the \emph{valency} of $v$.

\begin{rmk}
In order to motivate our further  definitions, let us indicate in which situation graphs will arise in the sequel. 
We shall consider the dual graph of a resolution of a normal surface singularity, 
so that vertices are in bijection with exceptional components of this resolution. The weight function is then given 
by genus  and the opposite of the self-intersection of a component.
\end{rmk}

%From now on all graphs are assumed to be connected weighted graphs.

If $\Gamma$ is a graph, a \emph{simple modification of} $\Gamma$ \emph{centered  at a vertex} $v$ is a graph $\Gamma'$ such that 
its set of vertices is $V'= V \cup\{v'\}$, 
its set of edges is $E' = E \cup \big\{ \edge{v}{v'} \big\}$, 
and its weight function $(g',e')$ satisfies $g'(w) = g(w)$ and $e'(w) = e(w)$ for all $w\in V\setminus\{v\}$, $g'(v) = g(v)$, $g'(v') =0$, $e'(v) = e(v) +1$, and $e(v')=1$. 

A \emph{simple modification of} $\Gamma$ \emph{centered at an edge} $\edge{v_0}{v_1}\in E$ is a graph $\Gamma'$ such that 
its set of vertices is $V'= V \cup\{v'\}$, 
its set of edges is $ E' = \big(E \setminus \big\{ \edge{v_0}{v_1}\big\}\big) \cup \big\{\edge{v_0}{v'}, \edge{v_1}{v'}\big\}$, 
and its weight function $(g',e')$ satisfies $g'(w) = g(w)$ for all $w\in V$, $g(v') =0$, $e'(w) = e(w)$ for all $w\in V \setminus \{v_0,v_1\}$, $e'(v_0) = e(v_0) +1$, $e'(v_1) = e(v_1) +1$, and $e(v')=1$. 

\smallskip

A \emph{modification} of a graph $\Gamma$  is a graph $\Gamma'$ which is obtained from $\Gamma$ by a finite sequence of simple modifications (centered either at a vertex or at an edge). 
A modification is nontrivial whenever it is not an isomorphism. 
We shall write $\Gamma' \rightsquigarrow \Gamma$ to say that $\Gamma'$ is a modification of $\Gamma$.
Note the following transitivity property: if $\Gamma''\rightsquigarrow \Gamma'$ and $\Gamma' \rightsquigarrow \Gamma$ are modifications, then
$\Gamma'' \rightsquigarrow \Gamma$ is also a modification.

Observe that a modification $\Gamma' \rightsquigarrow \Gamma$ yields a canonical inclusion $\imath\colon V(\Gamma) \to V(\Gamma')$ of the set of vertices of $\Gamma$ into $\Gamma'$. 
The image of a vertex $v$ by $\imath$ is called its \emph{strict transform}. Note that in general two vertices in $V$ joined by an edge need not have strict transforms joined by an edge in $\Gamma'$.

We can also define the \emph{total transform} of a connected subgraph $\Delta$ of $\Gamma$ by a modification $\Gamma' \rightsquigarrow \Gamma$. 
It is sufficient to explain the construction of the total transform in the case of a simple modification. 
If the center of the modification is a vertex not belonging to $\Delta$ or an edge $\edge{v}{w}$ with $v,w\notin \Delta$, then the total transform of $\Delta$ is the copy of $\Delta$ in $\Gamma'$ whose vertices are the strict transform of the vertices of $\Delta$. 
If the center is a vertex of $\Delta$, then its total transform is obtained from $\Delta$ by adding the new edge and the new vertex. 
If the center is an edge $\edge{v}{w}$ of $\Delta$, then its total transform is the graph whose vertices are the vertices of $\Delta$ together with the new vertex, 
and edges are edges of $\Delta$ different from $\edge{v}{w}$ together with the two new edges. 
Finally, if the center is an edge $\edge{v}{w}$ with $v\in \Delta$ and $w\notin \Delta$, then the vertices of the total transform are the vertices of $\Delta$ together with the new vertex, and edges are edges of $\Delta$ together with the only new edge that contains $v$.
It is not difficult to see that the total transform of a connected subgraph of $\Gamma$ is a connected subgraph of $\Gamma'$.

%Suppose $\Gamma' \rightsquigarrow \Gamma$ is a modification, and pick any connected subgraph $\Delta \subset \Gamma$. Denote by $\Delta'$ its total transform. 
%Then $\Delta'$ is a modification of $\Delta$. We call this new modification the \emph{induced modification} on $\Delta$.

\smallskip

We also introduce the notion of \emph{embedding} of a (weighted) graph of $\Gamma$ into another one $\Gamma'$.
This is an injective map $\varphi\colon V(\Gamma) \to V(\Gamma')$ such that $v \sim w$ implies $\varphi(v) \sim \varphi(w)$, $g'(\varphi(v))= g(v)$ and $e'(\varphi(v)) =e(v)$.
We shall write $\varphi \colon \Gamma \hookrightarrow \Gamma'$, and  denote by $\varphi(\Gamma)$ the subgraph of $\Gamma'$ whose vertices are $\varphi(v)$ with $v\in V(\Gamma)$ and edges $\edge{\varphi(v)}{\varphi(w)}$
with $\edge{v}{w} \in E(\Gamma)$. An \emph{isomorphism} is an embedding such that the induced maps on vertices and edges are bijective.

\smallskip

Suppose we are given two modifications $\Gamma' \rightsquigarrow \Gamma$ and  $\Delta' \rightsquigarrow \Delta$ 
and two embeddings $\varphi \colon \Delta \hookrightarrow \Gamma$, $\varphi' \colon \Delta' \hookrightarrow \Gamma'$. Then we say
that  the pair $(\Gamma' \rightsquigarrow \Gamma, \Delta' \rightsquigarrow \Delta)$ is $(\varphi', \varphi)$-\emph{compatible} whenever there exist simple modifications $\Gamma_{i+1} \rightsquigarrow \Gamma_i$
and embeddings $\varphi_i: \Delta_i \hookrightarrow \Gamma_i$ such that 
$\Gamma_0 = \Gamma$, $\Gamma_l = \Gamma'$, $\Delta_0 = \Delta$, $\Delta_l = \Delta'$, 
$\varphi_0 = \varphi$, $\varphi_l = \varphi'$, and the following holds.
\begin{enumerate}[label=(\roman*)]
\item\label{item:compatible_rules_vin}
When $\Gamma_{i+1} \rightsquigarrow \Gamma_i$ is centered at a vertex $\varphi_i(v)$ for some 
$v\in \Delta_i$, then $\Delta_{i+1}$ is obtained from $\Delta_i$ by the simple modification centered at $v$. 
\item
When $\Gamma_{i+1} \rightsquigarrow \Gamma_i$ is centered at a vertex $v\notin  \varphi_i(\Delta_i)$, then $\Delta_{i+1} = \Delta_i$.
\item 
When $\Gamma_{i+1} \rightsquigarrow \Gamma_i$ is centered at an edge $\edge{\varphi_i(v)}{\varphi_i(w)}$ for some 
edge $\edge{v}{w}$ in $\Delta_i$, then $\Delta_{i+1}$ is obtained from $\Delta_i$ by  the simple modification centered at $\edge{v}{w}$. 
\item 
When $\Gamma_{i+1} \rightsquigarrow \Gamma_i$ is centered at an edge $\edge{v}{\varphi_i(w)}$
for some $v \notin \Delta_i$ and $w\in\Delta_i$, then $\Delta_{i+1}$ is obtained from $\Delta_i$ by the simple modification centered at  $w$. 
\item \label{item:compatible_rules_eout}
When $\Gamma_{i+1} \rightsquigarrow \Gamma_i$ is centered at an edge  $\edge{v}{w}$
for some $v,w \notin \Delta_i$, then $\Delta_{i+1} = \Delta_i$.
 \end{enumerate}

\begin{prop}\label{prop-modif}
Suppose the pair $(\Gamma' \rightsquigarrow \Gamma, \Delta' \rightsquigarrow \Delta)$ is $(\varphi', \varphi)$-compatible, where
$\Gamma' \rightsquigarrow \Gamma$ and  $\Delta' \rightsquigarrow \Delta$  are modifications, and $\varphi \colon \Delta \hookrightarrow \Gamma$, $\varphi' \colon \Delta' \hookrightarrow \Gamma'$ are embeddings. 
\begin{enumerate}[label=(\roman*)]
\item\label{item:prop-modif-1}
The image by $\varphi'$ of the total transform in $\Delta'$ of any subgraph $G\subset \Delta$ is the total transform of $\varphi(G)$.
In particular, the total transform of $\varphi(\Delta)$ is $\varphi'(\Delta')$.
\item\label{item:prop-modif-2}
If $\imath_\Delta$ and $\imath_{\Gamma}$ denote the strict transform maps induced by the modifications, then 
$ \varphi' \circ \imath_\Delta = \imath_{\Gamma} \circ \varphi $.
\end{enumerate}
\end{prop}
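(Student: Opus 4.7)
The plan is to proceed by induction on the length $l$ of a $(\varphi',\varphi)$-compatible sequence of simple modifications witnessing the pair $(\Gamma' \rightsquigarrow \Gamma, \Delta' \rightsquigarrow \Delta)$. The base case $l=0$ is trivial since every map in sight is an identity. For the inductive step, I would factor the sequence as $\Gamma' \rightsquigarrow \Gamma_1 \rightsquigarrow \Gamma$ and $\Delta' \rightsquigarrow \Delta_1 \rightsquigarrow \Delta$ with a compatible embedding $\varphi_1\colon \Delta_1 \hookrightarrow \Gamma_1$, apply the inductive hypothesis to the tail starting at $\Gamma_1$, and compose with what must be established for the single initial simple step. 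Since total transforms and strict transforms are clearly compatible with composition of modifications, it is enough to prove (i) and (ii) when $\Gamma_1 \rightsquigarrow \Gamma$ and $\Delta_1 \rightsquigarrow \Delta$ are a single pair of compatible simple modifications (possibly with $\Delta_1 = \Delta$), that is, one of the five cases of the compatibility definition.

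For this single-step claim I would do a case analysis. In cases \ref{item:compatible_rules_eout} and the one where the center is a vertex outside $\varphi(\Delta_i)$ we have $\Delta_1 = \Delta$, hence $\imath_\Delta = \mathrm{id}$; the added vertex or edge in $\Gamma_1$ is disjoint from $\varphi(\Delta)$, so the only way $\varphi_1$ can be an embedding extending $\varphi$ consistently with the compatibility sequence is $\varphi_1 = \imath_\Gamma \circ \varphi$. This yields (ii), and for any subgraph $G \subset \Delta$ the total transform of $\varphi(G)$ in $\Gamma_1$ coincides with $\imath_\Gamma(\varphi(G)) = \varphi_1(G)$, which equals $\varphi_1$ applied to the total transform of $G$ in $\Delta_1 = \Delta$. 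In the three remaining cases \ref{item:compatible_rules_vin}, (iii), and (iv), a new vertex of weight $(0,1)$ and valency $1$ or $2$ is created on both sides. Since $\varphi_1$ is a weight-preserving embedding of graphs and the new vertex of $\Delta_1$ is the unique vertex of weight $(0,1)$ with that prescribed neighborhood, comparison of weights and adjacencies forces $\varphi_1$ to send it to the newly created vertex of $\Gamma_1$, and to coincide with $\imath_\Gamma \circ \varphi$ on the strict transforms of vertices of $\Delta$. Feeding this explicit description of $\varphi_1$ into the definition of total transform for each of the three cases yields (i), and (ii) is immediate.

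The principal obstacle I foresee is that the compatibility definition only asserts the existence of the embeddings $\varphi_i$ without an explicit rule for reconstructing $\varphi_{i+1}$ from $\varphi_i$, so one must genuinely argue that there is no freedom: the new vertex of $\Delta_{i+1}$, being of weight $(0,1)$ and of minimal valency attached only to the strict transform of its parent vertex (or parent edge endpoints), admits a unique possible image in $\Gamma_{i+1}$ under any weight-preserving embedding refining $\varphi_i$. Case (iv) is the most delicate: there an edge modification in $\Gamma$ is matched with a vertex modification in $\Delta$, and one must verify that the new edge of $\Gamma_{i+1}$ joining $\varphi_i(w)$ to the image of the new vertex of $\Delta_{i+1}$ is correctly recovered by the rule for the total transform of a subgraph under a simple modification centered at an edge having exactly one endpoint in it. Once this bookkeeping is carried out, both parts of the statement follow, and the final sentence of (i) is obtained by applying (i) with $G = \Delta$.
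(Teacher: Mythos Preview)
Your overall strategy---reduce to a single simple modification and verify each of the five compatibility rules---is exactly what the paper does; its proof is a two-line ``routine argument to verify the proposition in each of these cases.'' The inductive scaffolding you set up is correct, and your identification of case (iv) as the one requiring the most bookkeeping is accurate.

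The gap lies in your proposed resolution of the ``principal obstacle.'' You attempt to argue that the embedding $\varphi_{i+1}$ is \emph{forced} by the weight and adjacency constraints, but this is not true in general: if $\Gamma_i$ has symmetries, there can be several embeddings $\Delta_{i+1}\hookrightarrow\Gamma_{i+1}$ with the correct weights, and nothing in the literal definition of $(\varphi',\varphi)$-compatibility prevents $\varphi_{i+1}$ from jumping to a symmetric copy rather than agreeing with $\imath_{\Gamma_i}\circ\varphi_i$ on strict transforms. (For instance, if $\Delta$ is a single vertex mapping to one of two vertices of equal weight in $\Gamma$, and the simple modification is centered elsewhere, both choices for $\varphi_1$ satisfy the stated rules.) The correct reading---and the one the paper tacitly adopts---is that the compatibility data includes the requirement $\varphi_{i+1}\circ\imath_{\Delta_i}=\imath_{\Gamma_i}\circ\varphi_i$ at each step; this is how the $\varphi_i$ are actually constructed in Proposition~\ref{prop-induction} and used in the proof of Theorem~\ref{thm:graph-sdw}. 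Once you take that as part of the hypothesis, (ii) is immediate by composition and (i) follows from the case check you describe, without any appeal to uniqueness.
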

\begin{proof}
It is only necessary to check these properties when $\Gamma' \rightsquigarrow \Gamma$ is a simple modification in which case $\Delta' \rightsquigarrow \Delta$
is described by one of the rules \ref{item:compatible_rules_vin}-\ref{item:compatible_rules_eout} above. It is a routine argument to verify the proposition in each of these cases. 
\end{proof}

\begin{prop}\label{prop-induction}
Suppose $\Delta' \rightsquigarrow \Delta$ is a modification, and let $\varphi \colon\Delta\hookrightarrow\Gamma$ be any embedding. 
\begin{itemize}
\item
Then there exists a modification  $\Gamma' \rightsquigarrow \Gamma$, called the \emph{induced modification}, and an embedding $\varphi' \colon\Delta'\hookrightarrow\Gamma'$
such that the pair $(\Gamma' \rightsquigarrow \Gamma, \Delta' \rightsquigarrow \Delta)$ is $(\varphi', \varphi)$-compatible.
\item
Suppose we are given another embedding $\psi\colon \Gamma \hookrightarrow \widetilde{\Gamma}$. 
Then the two modifications of $ \widetilde{\Gamma}$ induced by $\psi$ or $\psi \circ \varphi$ are isomorphic. 
We denote either of them by $\widetilde{\Gamma}'\rightsquigarrow\widetilde{\Gamma}$. 
The embedding $\Delta'\hookrightarrow\widetilde{\Gamma}'$ is the composition of the embeddings $\Delta'\hookrightarrow\Gamma'$ and $\Gamma'\hookrightarrow\widetilde{\Gamma}'$. 
\end{itemize}
\end{prop}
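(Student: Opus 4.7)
Both statements will be proved by induction on the length $\ell$ of a decomposition of $\Delta'\rightsquigarrow\Delta$ as a composition of simple modifications $\Delta_{i+1}\rightsquigarrow\Delta_i$ with $\Delta_0=\Delta$ and $\Delta_\ell=\Delta'$. The key idea for the first part is to lift each such simple modification through the embedding, using rules \ref{item:compatible_rules_vin} and \ref{item:compatible_rules_eout} of the definition of compatibility in the opposite direction from the one in which they are stated: given a modification on the $\Delta$ side, we perform the corresponding modification on the $\Gamma$ side at the image of its center, and extend the embedding to account for the newly created vertex.

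For the first part, we construct by induction on $i$ a simple modification $\Gamma_{i+1}\rightsquigarrow\Gamma_i$ together with an embedding $\varphi_i\colon \Delta_i\hookrightarrow\Gamma_i$, starting from $\Gamma_0=\Gamma$ and $\varphi_0=\varphi$. At the inductive step, if $\Delta_{i+1}\rightsquigarrow\Delta_i$ is centered at a vertex $v\in\Delta_i$ and adds a new vertex $v'$ with $g(v')=0$, $e(v')=1$, joined to $v$ by an edge, we define $\Gamma_{i+1}$ to be the simple modification of $\Gamma_i$ centered at $\varphi_i(v)$, which creates a new vertex $u'$ with the same weights; we then extend $\varphi_i$ to $\varphi_{i+1}$ by setting $\varphi_{i+1}(v')=u'$. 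Because the weight increments on $v$ and $\varphi_i(v)$ are identical, this is indeed an embedding. The case of a simple modification centered at an edge $\edge{v}{w}\in E(\Delta_i)$ is handled analogously: the image edge $\edge{\varphi_i(v)}{\varphi_i(w)}$ exists in $\Gamma_i$ because $\varphi_i$ is an embedding, so we may center the lifted modification there and extend the embedding by sending the new vertex in $\Delta_{i+1}$ to the new vertex in $\Gamma_{i+1}$. Setting $\Gamma'=\Gamma_\ell$ and $\varphi'=\varphi_\ell$ produces the desired data, and compatibility is verified step by step from cases \ref{item:compatible_rules_vin} and \ref{item:compatible_rules_eout} of the definition.

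For the second part, we apply the construction above twice. On the one hand, we lift the simple modifications of $\Delta'\rightsquigarrow\Delta$ first along $\varphi$ into a sequence of simple modifications $\Gamma_{i+1}\rightsquigarrow\Gamma_i$ with embeddings $\varphi_i$, and then lift this sequence along $\psi$ into a sequence $\widetilde\Gamma_{i+1}\rightsquigarrow\widetilde\Gamma_i$ with embeddings $\psi_i\colon\Gamma_i\hookrightarrow\widetilde\Gamma_i$. On the other hand, we lift the same $\ell$ simple modifications of $\Delta'\rightsquigarrow\Delta$ directly along $\psi\circ\varphi$ into a sequence $\widetilde\Gamma''_{i+1}\rightsquigarrow\widetilde\Gamma''_i$ with embeddings $\varphi''_i\colon\Delta_i\hookrightarrow\widetilde\Gamma''_i$. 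At each stage both lifts are centered at the image in $\widetilde\Gamma_i$ (respectively $\widetilde\Gamma''_i$) of the same center in $\Delta_i$, and this image is the same point under the inductively constructed identification. We therefore obtain by induction isomorphisms $\widetilde\Gamma_i\cong\widetilde\Gamma''_i$ intertwining the embeddings from $\Delta_i$, which at $i=\ell$ yields the required isomorphism of induced modifications.

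The construction itself is essentially forced by the compatibility rules, so the main obstacle is purely bookkeeping: keeping track of the fact that the lifting of a simple modification depends only on which vertex or edge the current embedding maps the center to, and verifying that composition of embeddings commutes with this lifting on the nose. No case is substantive, but one must carefully check all rules \ref{item:compatible_rules_vin}--\ref{item:compatible_rules_eout} to see that the decomposition of the composite modification produced by the inductive procedure is indeed $(\varphi',\varphi)$-compatible.
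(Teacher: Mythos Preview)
Your proof is correct and follows essentially the same approach as the paper: decompose $\Delta'\rightsquigarrow\Delta$ into simple modifications and lift each one through the embedding by centering the corresponding simple modification of $\Gamma_i$ at the image of the center. One small slip: the edge case you describe matches compatibility rule~(iii), not rule~\ref{item:compatible_rules_eout}; otherwise your argument (including the treatment of the second part, which the paper leaves to the reader) is exactly what is intended.
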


\begin{proof}
Decompose $\Delta' \rightsquigarrow \Delta$  into simple modifications $\Delta_{i+1} \to \Delta_i$,  such that 
$\Delta_0 = \Delta$, $\Delta_l = \Delta'$. To simplify notation we assume $\Delta_0 \subset \Gamma_0 := \Gamma$.
We define by induction a graph $\Gamma_i$ that contains $\Delta_i$ as follows. 
If $\Delta_{i+1}$ is the modification centered at an edge $\edge{v}{w}$ with $v,w \in \Delta_i$, then set $\Gamma_i$
to be the modification centered at $\edge{v}{w}$. If $\Delta_{i+1}$ is the modification centered at a vertex $v\in \Delta_i$, then we set $\Gamma_i$
to be the modification centered at $v$. It is clear from the definitions that we obtain compatible modifications in this way.
The second point is proven in the same way; 
%It is only necessary to check it when $\Delta' \rightsquigarrow \Delta$ is a simple modification. 
details are left to the reader. 
\end{proof}

\begin{rmk}
A graph $\Gamma'$ satisfying the first point of Proposition~\ref{prop-induction} is in general not unique. When $\Delta_{i+1}$ is the modification centered at a vertex $v\in \Delta_i$ that is connected to a vertex
$w\notin \Delta_i$, we could also have defined $\Gamma_{i+1}$ to be the modification centered at the edge $\edge{v}{w}$.
Note however that the construction  given  in the proof of the previous proposition yields a graph $\Gamma'$ that is minimal in the sense that the sum of the weights at all its vertices is minimal (among all possible graphs). 
\end{rmk}

\subsection{Characterization of self-similar graphs}

\begin{defi}\label{def:graph-ss}
A connected graph is said to be \emph{regular} if it is  a modification of the graph with one vertex and weight $(0,1)$. 
It is said to be 
\emph{sandwiched} if it can be embedded into a regular graph. 

A connected graph $\Gamma$ is said to be \emph{self-similar} if it admits a nontrivial modification $\Gamma'$
that contains a subgraph $\Gamma_0$ that is isomorphic (as a weighted graph) to $\Gamma$.
\end{defi}

\begin{thm}\label{thm:graph-sdw}
Suppose $\Gamma$ is a connected weighted graph. 
If $\Gamma$ is self-similar then it is sandwiched. 
\end{thm}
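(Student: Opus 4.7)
I would iterate the self-similarity data and use the compatibility machinery of Propositions~\ref{prop-modif} and~\ref{prop-induction} to reduce the problem to an explicit combinatorial construction.

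The crucial stability observation is that sandwichedness propagates along modifications in the target: if $\Gamma \hookrightarrow R$ is an embedding into a regular graph $R$ and $\Gamma'' \rightsquigarrow \Gamma$ is any modification, then Proposition~\ref{prop-induction} produces a modification $R'' \rightsquigarrow R$ and an embedding $\Gamma'' \hookrightarrow R''$, and since modifications compose, $R''$ remains regular. Together with the trivial fact that subgraphs of sandwiched graphs are sandwiched, this shows that $\Gamma$ is sandwiched if and only if $\Gamma'$ is, so it suffices to sandwich any term of the tower
\[ \Gamma = \Gamma^{(0)} \hookrightarrow \Gamma^{(1)} \hookrightarrow \Gamma^{(2)} \hookrightarrow \cdots \]
obtained by iterating Proposition~\ref{prop-induction} from the self-similarity data, with each $\Gamma^{(n+1)} \rightsquigarrow \Gamma^{(n)}$ nontrivial.

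Next, I would extract structural constraints on $\Gamma$ from the embedding $\Gamma_0 \cong \Gamma \subset \Gamma'$. A multiset comparison of genera — new vertices introduced by simple modifications have genus zero, while strict transforms preserve genus and can only increase $e$ — forces every positive-genus vertex of $\Gamma$ to have its strict transform in $\Gamma_0$ with unchanged $e$-weight, forbidding any simple modification centered at a positive-genus vertex or at an edge incident to one. A parallel cycle-space argument, using the invariance of the first Betti number under modification together with the injection of cycle spaces under subgraph inclusion, shows that every simple cycle of $\Gamma'$ lies inside $\Gamma_0$ and has the same length as the corresponding cycle of $\Gamma$, ruling out modifications centered on cyclic parts. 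Propagating these constraints up the tower should force $\Gamma$ to be a tree of genus-zero vertices.

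Finally, with $\Gamma$ reduced to a tree of genus-zero vertices, I would construct a regular graph containing $\Gamma$ by exhibiting a sequence of simple modifications of $\bullet$ that realizes the required $e$-weights at each vertex of $\Gamma$, using the self-similarity data — transferred through the tower via Proposition~\ref{prop-modif} — to guide the attachment pattern at each step. The main obstacle is this final step: not every weighted tree of genus-zero vertices is sandwiched (for instance, a path of three vertices of weight $(0,1)$ is not, since every $(0,1)$-vertex of a regular graph has all neighbors of $e$-weight at least $2$), so the construction must genuinely exploit the recursive embedding data rather than merely the tree-plus-genus structure. I expect the core technical work to lie in converting this recursive data into an explicit sequence of blow-ups of $\bullet$.
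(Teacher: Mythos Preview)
Your tower construction matches the paper's Step~1, and your observation that sandwichedness is invariant along the tower is correct and useful. But the proposal then takes a detour that does not close the argument.

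The genus and cycle constraints you extract are plausible, but they do \emph{not} force $\Gamma$ to be a tree of genus-zero vertices: they only show that the simple modifications in $\Gamma'\rightsquigarrow\Gamma$ avoid cyclic edges and positive-genus vertices. A graph can perfectly well carry a cycle or a positive-genus vertex while all modifications occur elsewhere, and nothing in ``propagating up the tower'' changes this. Even granting the reduction, you yourself identify the real obstruction: not every weighted genus-zero tree is sandwiched, so you still need a construction that genuinely uses the recursive data, and you do not supply one. The proposal therefore stops exactly at the hard step.

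The paper avoids both the structural reduction and the explicit construction by proving two things you are missing. First, a general combinatorial lemma (Lemma~\ref{lem:whaou}): for \emph{any} modification $\Gamma'\rightsquigarrow\Gamma$, the subgraph of $\Gamma'$ obtained by deleting the strict transforms of all vertices of $\Gamma$ (and their incident edges) is a disjoint union of regular graphs. Second, a dynamical step (the paper's Steps~2--3): using the tower and a pigeonhole/periodicity argument on links of vertices, one shows that for $k$ large enough the embedded copy $\Phi_k(\Gamma)\subset\Gamma_k$ is disjoint from the strict transform $I_k(V(\Gamma))$. Combining these, $\Phi_k(\Gamma)$ sits inside a regular subgraph of $\Gamma_k$, and $\Gamma$ is sandwiched. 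No case analysis on the shape of $\Gamma$ is needed.
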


In fact the reverse implication is also true. We leave it as an exercise to the reader since we shall not need it in the sequel.

\begin{proof}
We fix a modification $\Gamma'$ of $\Gamma$, and an embedding $\varphi\colon \Gamma \hookrightarrow \Gamma'$. Recall that the strict transform yields an inclusion 
$\imath\colon V(\Gamma) \to V(\Gamma')$.  

\medskip

\noindent {\bf Step 1}. There exists a sequence of graphs $\Gamma_n$ with $\Gamma_0 := \Gamma$, and $\Gamma_1 := \Gamma'$,  such that 
\begin{enumerate}[label=(\roman*)]
\item $\Gamma_{n+1}$ is a modification of $\Gamma_n$;
\item there exist embeddings $\varphi_n\colon \Gamma_n \hookrightarrow \Gamma_{n+1}$;
\item \label{item:step1compatibility} the pair $(\Gamma_{n+1}\rightsquigarrow \Gamma_n,\Gamma_{n}\rightsquigarrow \Gamma_{n-1})$ is $(\varphi_n, \varphi_{n-1})$-compatible.
\end{enumerate}
Since $\Gamma'\rightsquigarrow \Gamma$ is a modification and $\Gamma$ is a subgraph of $\Gamma'$ via the embedding $\varphi$,
Proposition~\ref{prop-induction} gives a modification $\Gamma_2\rightsquigarrow \Gamma'$ and an embedding $\varphi_1\colon \Gamma' \hookrightarrow \Gamma_2$
with the right compatibility properties.  

To  build the sequence $\Gamma_n$ we proceed in the same way and use repeteadly Proposition~\ref{prop-induction}. 
More precisely suppose $\Gamma_{n}$ has been defined. For each $n$ there exists an embedding $\Gamma \hookrightarrow \Gamma_n$ obtained by the map 
$\Phi_n := \varphi_{n-1} \circ \ldots \circ \varphi_0$ (with $\varphi = \varphi_0$). Then we construct
$\Gamma_{n+1}$ from this embedding and the modification $\Gamma'\rightsquigarrow \Gamma$ using Proposition~\ref{prop-induction}.

Observe that $\Gamma_{n}$ is obtained by the modification $\Gamma'\rightsquigarrow \Gamma$ using $\Phi_{n-1}$ whereas 
$\Gamma_{n+1}$ is obtained by the modification $\Gamma'\rightsquigarrow \Gamma$ using $\varphi_{n-1} \circ \Phi_{n-1}$. 
This implies the existence of an embedding $\varphi_n \colon \Gamma_n \to \Gamma_{n+1}$ satisfying the compatibility property (c)
(see the second point of Proposition~\ref{prop-induction}).

\medskip

\noindent {\bf Step 2}.  Suppose that there exists a vertex $v\in V(\Gamma)$ such that $\imath(v) = \varphi(v)$. We claim that the modification is trivial. 

\smallskip

To see this define the edge distance on $V(\Gamma)$ by setting $d(x, y)$ to be the least integer $n \in \N$ such that there exists 
$v_0, \ldots, v_n \in V(\Gamma)$ with the property  $v_0 =x$, $v_n=y$ and $\edge{v_0}{v_1}, \ldots, \edge{v_{n-1}}{v_n} \in E(\Gamma)$. 

\smallskip

We first prove by induction on $n:= d(w, v)$, that  there exists an integer $k\ge1$ such that  
$I_k(w) = \Phi_k(w)$ for any $w\in V(\Gamma)$ at distance at most $n$ to $v$ where $I_k$ is the strict transform map induced by the modification
$\Gamma_k \rightsquigarrow \Gamma$, and $\Phi_k := \varphi_{k-1} \circ \ldots \circ \varphi_0$.
 
\smallskip

Our claim for $n=0$ is our standing assumption (with $k=1$).  Assume the induction hypothesis for vertices at distance $n-1$  to $v$. 
 Pick  $w$ at distance $n$ to $v$. By definition there exists (at least one) $w_0\in V$ at distance $n-1$ to $v$ and $1$ to $w$. Replacing $\Gamma_k$ by $\Gamma'$
we may suppose that $\imath(w_0) = \varphi(w_0)$. In particular we have  $e(\imath(w_0)) = e(w_0)$. 

Decompose the modification from  $\Gamma' \rightsquigarrow\Gamma'$ into a sequence of simple modifications
$\Delta_{i+1} \to \Delta_i$ with   $\Delta_0:=\Gamma$,  and $\Gamma':= \Delta_l$.
The equality of weights $e(\imath(w_0)) = e(w_0)$ implies that the center of all simple modifications
$\Delta_{i+1} \to \Delta_i$ cannot be an edge that contains the strict transform of $w_0$ in $\Delta_i$ or a vertex that is joined to this strict transform
by an edge.  It follows that the strict transform of any vertex  at distance $1$ from $ w_0$ in $\Gamma'$ is again joined to $\imath(w_0)$ by an edge, and no new edge
starting from $\imath(w_0)$ are created. In particular, $\imath$ induces a bijection from the link $L(w_0)$ to $L(\imath(w_0))$, and 
the valency of $w_0$ is equal to the valency of $\imath(w_0)$. Since $\varphi$ is an isomorphism from $\Gamma$ onto its image,  $\varphi(L(w_0))$ is included in $L(\varphi(w_0))$. The two sets having the same cardinality, we conclude that $\varphi$ is a bijection (hence an isomorphism) from $L(w_0)$ to $L(\varphi(w_0))=\varphi(L(w_0))$.  Denote by $f := \imath^{-1} \circ \varphi$ the bijection on $L(w_0)$.

\smallskip

Observe  that the quantity $\sum_{v\in L(w_0)} e(v)$ is equal to $\sum_{v\in L(\varphi(w_0))} e(v)$. 
Since this quantity can only increase along the sequence of simple modifications $\Delta_{i+1} \to \Delta_i$
it remains constant.  This implies that  the total transform of $L(w_0)$ is equal to $L(\varphi(w_0))$. 

\smallskip

By Proposition~\ref{prop-modif} \ref{item:prop-modif-1} and  Property \ref{item:step1compatibility} of Step 1, it follows that the total transform
of $L(\varphi(w_0)) = \varphi(L(w_0))$ by $\Gamma_2\rightsquigarrow \Gamma_1$ is also equal to $\varphi_1( \varphi(L(w_0)))$. 
In other words, the total transform of $L(w_0)$ by the composite modification $\Gamma_2 \rightsquigarrow \Gamma$
is isomorphic to itself.  In particular the injective map $I_2$ induced by this strict transform defines a bijection from
$L(w_0)$ to $\varphi_1( \varphi(L(w_0)))$, and we get as before a bijection $f_2\colon L(w_0) \to L(w_0)$.

Denote by $\imath_1$ the strict transform map induced by $\Gamma_2 \rightsquigarrow \Gamma$ so that 
$f_2 = \imath^{-1} \circ \imath_1^{-1}\circ  \varphi_1 \circ \varphi$.
By Proposition~\ref{prop-modif} \ref{item:prop-modif-2}, we have $\varphi_1 \circ  \imath= \imath_1\circ  \varphi$, 
whence $f_2 =  \imath^{-1} \circ  \varphi\circ \imath^{-1} \circ  \varphi = f \circ f$.

By repeating this argument, we see that the strict tranform of $L(w_0)$ by the modification $\Gamma_k \rightsquigarrow \Gamma$ is equal to
$\Phi_k(L(w_0))$ and that $I_k^{-1} \circ \Phi_k = f^{\circ k}$. 
Since $L(w_0)$ is finite, it follows that $f^{\circ k} = \id$ for some $k$. 
Observe that we may choose $k$ to be the least common multiple of all integers less than the cardinality of $\Gamma$, which 
is then independent of $w_0$. 
This proves our claim.

\smallskip

We now explain how this claim implies Step 2.  Replacing $\Gamma'$ by some $\Gamma_k$ for $k$ sufficiently large, 
we have $\imath(w) = \varphi(w)$ for all $w$. Since $\varphi$ preserves the weights, we get $e(\imath(w)) = e(w)$ for all $w$
which is only possible when the modification is trivial.

\medskip

\noindent {\bf Step 3}. If $\Gamma$ is self-similar, 
then there exists an integer  $k\ge 1$ such that for all $v\in V(\Gamma)$  one has $I_k(v) \notin \Phi_k(V(\Gamma))$.

\smallskip

We first prove that  if $I_k(v) \notin \Phi_k(V(\Gamma))$ then $I_{k+l}(v) \notin \Phi_{k+l}(V(\Gamma))$ for all $l\ge 0$. 
Indeed suppose $I_k(v) \notin \Phi_k(V(\Gamma))$, and recall from Step 1 that $\Gamma_{k+1}$ is built from the embedding $\Phi_k \colon \Gamma \hookrightarrow \Gamma_k$
and the modification $\Gamma' \rightsquigarrow \Gamma$. It follows that the strict tranform $I_{k+1}(v)$ of $I_k(v)$ in $\Gamma_{k+1}$ is not contained
in the image of $\Gamma'$ in $\Gamma_{k+1}$ which contains the image of $\Gamma$ in $\Gamma_{k+1}$. 
Therefore $I_{k+1}(v) \notin \Phi_{k+1}(V(\Gamma))$ which proves $I_{k+l}(v) \notin \Phi_{k+l}(V(\Gamma))$ for all $l\ge 0$ by induction as required. 

\smallskip

It is thus sufficient to find for any vertex $v$ of $\Gamma$ an integer $k$ such that $I_k(v) \notin \Phi_k(V(\Gamma))$.
To prove this we proceed by contradiction,  and pick a vertex $v$ of $\Gamma$ for which $I_k(v) \in \Phi_k(V(\Gamma))$ for all $k$. 
Since $\Phi_k$ is an embedding, for each $k$ there exists a unique vertex $v_k$ of $\Gamma$ such that $I_k(v) = \Phi_k(v_k)$.
Choose $k, l >0$ such that $v_{k+l} = v_k$. 

Observe that we have a modification $\Gamma_{k+l}\rightsquigarrow \Gamma_k$ and an embedding $\Phi \colon\Gamma_k \hookrightarrow\Gamma_{k+l} $
obtained by composing $\varphi_{k+l-1} \circ \ldots \circ \varphi_k$. Denote by $I$ the strict transform map induced by $\Gamma_{k+l}\rightsquigarrow \Gamma_k$, and let $w := \Phi_k(v_k) = \Phi_k(v_{k+l}) = I_k(v)$. 
Then we get
\[
I(w) = \imath_{k+l-1} \circ \ldots \circ \imath_k (I_k(v)) = I_{k+l}(v) = \Phi_{k+l}(v_{k+l}) = \Phi (\Phi_k(v_{k+l})) = \Phi(w)~.
\] 
Step 2 then implies that $\Gamma_{k+l}\rightsquigarrow \Gamma_k$ is an isomorphism which is only possible if $\Gamma'\rightsquigarrow \Gamma$
is also an isomorphism. This is a contradiction.

\medskip

\noindent {\bf Step 4}. Finally we prove that $\Gamma$  being self-similar implies the graph to be sandwiched.

\smallskip

By Step 3, replacing $\Gamma'$ by $\Gamma_k$ for a sufficiently large $k$ we may  assume that $\imath(v) \notin \varphi(V(\Gamma))$ for all vertices $v$ of $\Gamma$.
It remains to prove that this implies the graph to be sandwiched.

This is a consequence of the following general fact. Suppose $\Gamma'  \rightsquigarrow \Gamma$ is a modification. 
Look at the (not necessarily connected) graph $\widehat{\Gamma'}$ obtained by removing the strict transforms of all vertices in $\Gamma$
and all edges connected to any  of these. 

\begin{lem}\label{lem:whaou}
The graph $\widehat{\Gamma'}$ is a union of regular graphs.
\end{lem}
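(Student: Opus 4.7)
The plan is to proceed by induction on the length $\ell$ of a decomposition of $\Gamma'\rightsquigarrow\Gamma$ into simple modifications. When $\ell=0$, we have $\Gamma'=\Gamma$ and every vertex of $\Gamma'$ is a strict transform, so $\widehat{\Gamma'}$ is empty and there is nothing to prove. For the inductive step I would factor the modification as $\Gamma'\rightsquigarrow\Gamma''\rightsquigarrow\Gamma$, where $\Gamma'\rightsquigarrow\Gamma''$ is a single simple modification and $\Gamma''\rightsquigarrow\Gamma$ has length $\ell-1$, and assume the lemma known for $\Gamma''\rightsquigarrow\Gamma$. Denote by $v'$ the unique new vertex produced in passing from $\Gamma''$ to $\Gamma'$; it has weight $(0,1)$. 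The key observation is that the strict transform of $V(\Gamma)$ inside $V(\Gamma')$ is exactly the image, under the canonical inclusion $V(\Gamma'')\hookrightarrow V(\Gamma')$, of the strict transform of $V(\Gamma)$ in $V(\Gamma'')$; in particular $v'$ is never a strict transform.

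The inductive step then reduces to a case analysis according to the position of the center of $\Gamma'\rightsquigarrow\Gamma''$ with respect to the strict transform of $V(\Gamma)$ in $\Gamma''$. In the first kind of situation the center is either a vertex lying in this strict transform, or an edge both of whose endpoints do; then every edge emanating from $v'$ in $\Gamma'$ is incident to a vertex that is deleted in $\widehat{\Gamma'}$, so $v'$ becomes isolated and $\widehat{\Gamma'}=\widehat{\Gamma''}\sqcup\{v'\}$. Since $\{v'\}$ is itself regular, the conclusion follows from the induction hypothesis. In the second kind of situation the center lies entirely inside $\widehat{\Gamma''}$ (either a vertex not in the strict transform, or an edge between two such), and $\widehat{\Gamma'}$ is obtained from $\widehat{\Gamma''}$ by performing the very same simple modification; a regular connected component of $\widehat{\Gamma''}$ thus remains regular. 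In the third and most delicate situation the center is an edge $\edge{u_0}{u_1}$ with $u_0$ in the strict transform and $u_1$ not, so that the edge $\edge{u_0}{v'}$ is erased together with $u_0$ and only $\edge{u_1}{v'}$ survives in $\widehat{\Gamma'}$; this exhibits $\widehat{\Gamma'}$ as the simple vertex modification of $\widehat{\Gamma''}$ centered at $u_1$, which again preserves regularity of the connected component of $u_1$.

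The argument is essentially a bookkeeping exercise and I do not anticipate major obstacles. The point that deserves some care is to check, in the third case above, that the weight of $u_1$ recorded inside $\widehat{\Gamma'}$ coincides with the weight obtained by formally performing a simple vertex modification of $\widehat{\Gamma''}$ at $u_1$; this works because the edge modification $\Gamma'\rightsquigarrow\Gamma''$ at $\edge{u_0}{u_1}$ increases $e(u_1)$ by exactly $1$, which is also the effect of a vertex modification centered at $u_1$, while the genus is unchanged in both cases. Once this verification is made the induction goes through in each case, and we conclude that $\widehat{\Gamma'}$ is always a disjoint union of regular graphs, as required.
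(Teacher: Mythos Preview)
Your proposal is correct and follows essentially the same argument as the paper: induction on the number of simple modifications, with the same five-way case analysis on the position of the center relative to the strict transform of $V(\Gamma)$, reaching the same conclusion in each case (new isolated regular vertex, induced modification of $\widehat{\Gamma''}$, or the mixed-edge case yielding a vertex modification). The only cosmetic difference is that the paper starts the induction at $k=1$ rather than $\ell=0$.
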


Now $\varphi(\Gamma)$ is a subgraph that does not contain any of the strict transform $\imath(v)$ for $v\in \Gamma$, hence
is a subgraph of $\widehat{\Gamma'}$. In particular, $\varphi(\Gamma)$ (hence $\Gamma$) is sandwiched.
\end{proof}

\begin{proof}[Proof of Lemma~\ref{lem:whaou}]
We argue by induction on the number $k$ of simple modifications needed to define $\Gamma'  \rightsquigarrow \Gamma$.
When $k=1$ then $\widehat{\Gamma'}$ is reduced to the sole vertex added to $\Gamma$ and by definition its weight is $(0,1)$, hence is regular.
Suppose the lemma is proved for $k-1$, and suppose $\Gamma'  \rightsquigarrow \Gamma$ is decomposed into
$k$ simple modifications $\Delta_i \rightsquigarrow \Delta_{i-1}$.

Consider the modification $\Delta_{k-1} \rightsquigarrow \Gamma$. By the inductive assumption, the corresponding graph
$\widehat{\Delta}_{k-1}$ is a disjoint union of regular graphs.  When $\Delta_k$ is a modification centered at a vertex that does not lie
in $\widehat{\Delta}_{k-1}$ then $\widehat{\Gamma'}$ is the disjoint union of $\widehat{\Delta}_{k-1}$ and a single vertex with weight $(0,1)$. 
When $\Delta_k$ is  a modification centered at a vertex $v\in \widehat{\Delta}_{k-1}$ then $\widehat{\Gamma'}$ is obtained by a modification of $\widehat{\Delta}_{k-1}$ centered at $v$
and it still regular.  When $\Delta_k$ is a modification centered at an edge $\edge{v}{w}$ with $v,w\notin  \widehat{\Delta}_{k-1}$, then  
$\widehat{\Gamma'}$ is the disjoint union of $\widehat{\Delta}_{k-1}$ and a single vertex with weight $(0,1)$. 
When $\Delta_k$ is a modification centered at an edge $\edge{v}{w}$ with $v\in \widehat{\Delta}_{k-1}$ and $w\notin  \widehat{\Delta}_{k-1}$, then  
$\widehat{\Gamma'}$ is obtained by a modification of $\widehat{\Delta}_{k-1}$ centered at $v$. 
Finally, when $\Delta_k$ is a modification centered at an edge $\edge{v}{w}$ with $v,w\in  \widehat{\Delta}_{k-1}$, then  
$\widehat{\Gamma'}$  is obtained by a modification of $\widehat{\Delta}_{k-1}$ centered at $\edge{v}{w}$.
In all cases, all connected components of $\widehat{\Gamma'}$ remain regular which completes the proof.
\end{proof}

%%%%%%%%%%%%%%%%%%%%%%%%%%%%%%%%%%%%%%%%%%%%%%%%%%%%%%%%%%%%%%%

% \newpage

\section{Self-similar links have self-similar skeletons}\label{section_proof_main}

 In this section we prove the implication \ref{condition_selfsim} $\implies$ \ref{condition_graph} of Theorem~\ref{mainthm}.
 For technical reasons we introduce the following condition.

 \begin{enumerate}[label=($\dagger$)]
 	\item \label{condition_prime}   There exists a finite set $T$ of type 1 points of $\NL(X,0)$ such that $\NL(X,0)\setminus T$ is isomorphic to an open subset $U$ of $\NL(X,0)$ satisfying $\overline{U} \subsetneq \NL(X,0)$.
 \end{enumerate}
 \label{conditiondag}

 Observe that \ref{condition_selfsim} implies \ref{condition_prime}, so that 
we are reduced to prove \ref{condition_prime} $\implies$ \ref{condition_graph}. To do so, 
we rely  on the following result.

\subsection{Extending morphisms from the punctured disc}
\begin{prop}\label{lem:keyboundary}
Let $D$ be an open $k((t))$-analytic disc and let $0$ denote its origin.
Let $\varphi \colon D\setminus\{0\}\to\NL(X,0)$ be any map that induces an isomorphism of locally ringed spaces between $D\setminus\{0\}$ and its image. % in $\NL(X,0)$.
Then the map $\varphi$ extends continuously to $0$, and $\varphi(0)$ is either a divisorial point or a rigid point of $\NL(X,0)$. 
In the latter case $\varphi$ extends to an isomorphism of locally ringed spaces from $D$ to $\varphi(D)$.
\end{prop}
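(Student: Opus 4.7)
The plan is to show that $\varphi$ extends continuously by identifying $\varphi(0)$ as the unique point of $\NL(X,0)$ toward which the image of $\varphi$ accumulates along any path approaching $0$ in $D$, and then to determine its type by exploiting the disc/annulus description of $\NL(X,0)$ produced by good resolutions.

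First I would set up the problem. By Remark~\ref{remark_disc_minus_point} one may identify $D\setminus\{0\}$ with a $k((t))$-analytic annulus $A$ as a ringed space in $k$-algebras; the puncture $0$ corresponds to one of the two topological ends of $A$. Fix a proper injective continuous path $\gamma\colon [0,+\infty)\to D$ with $\gamma(s)\to 0$; by Theorem~\ref{thm:R-tree}, $\gamma$ eventually traces the half of the skeleton of $A$ heading to this end. Set $U=\varphi(D\setminus\{0\})$. The problem becomes showing that $\varphi\circ\gamma$ has a unique limit $x^*\in\NL(X,0)$, independent of $\gamma$, of type~$1$ or~$2$, and extending $\varphi$ to an isomorphism across $0$ in the type~$1$ case.

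Second I would construct $x^*$ using good resolutions. For any good resolution $(Y,D_Y)$ of $(X,0)$, Corollary~\ref{corollary_global_topology} decomposes $\NL(X,0)$ as the disjoint union of the finite set $\Div(Y)$ and the connected components $V_y=\cent_Y^{-1}(y)$ for $y$ a closed point of $D_Y$, each being a disc or an annulus. Since $\Div(Y)$ is finite and $\varphi$ is injective, $\varphi\circ\gamma$ meets $\Div(Y)$ only finitely often; hence either $\varphi\circ\gamma$ eventually enters and stays in a single component $V_{y_Y}$, or it converges directly to some divisorial point $x\in\Div(Y)$ (giving $\varphi(0)=x$, of type~$2$). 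As $Y$ varies over the directed system of good resolutions, the $V_{y_Y}$ are nested (since centers are compatible under refinement), so by Proposition~\ref{prop:topo_NL} their intersection is a basis of neighborhoods of a unique point $x^*$; one then checks that $\varphi\circ\gamma\to x^*$ and that $x^*$ is independent of $\gamma$, yielding the continuous extension $\varphi(0):=x^*$.

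Third I would determine the type of $x^*$ and handle the rigid case. If $x^*$ is not divisorial, every sufficiently fine $V_{y_Y}$ is a disc or annulus containing $x^*$ and the tail of $\varphi\circ\gamma$; the tail traces a definite half of the skeleton of $A$, which picks out a single tree-theoretic endpoint of $V_{y_Y}$. The endpoint of a disc that is internal to $\NL(X,0)$ is a type~$1$ point (its origin); other endpoint directions lead to divisorial points on further blowing up. Refining across a cofinal family of good resolutions, each step either identifies $x^*$ as divisorial or shrinks a disc around $x^*$, forcing $x^*$ to be a rigid point corresponding to an irreducible formal germ of curve on $(X,0)$. In this case, Corollary~\ref{corollary_topology} gives a disc neighborhood $D^*$ of $x^*$; for $s$ large, $\varphi(\gamma(s))\in D^*$ and by Theorem~\ref{thm:R-tree} the preimage $\varphi^{-1}(D^*\cap U)$ is a punctured sub-disc $\{|T|<\varepsilon\}\setminus\{0\}\subset D$. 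Applying Lemma~\ref{lemma_extension_to_type1} to extend bounded analytic functions across $\{0\}$ and $\{x^*\}$, the induced isomorphism $\{|T|<\varepsilon\}\setminus\{0\}\cong D^*\setminus\{x^*\}$ extends uniquely to an isomorphism $\{|T|<\varepsilon\}\cong D^*$ sending $0$ to $x^*$, and gluing with $\varphi|_{\{\varepsilon\le|T|<1\}}$ yields the desired isomorphism $D\cong\varphi(D)$.

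The main obstacle is the third step: ruling out that $x^*$ is of type~$3$ or~$4$ when it is not divisorial, which requires a careful analysis across the cofinal system of good resolutions to show that the half-skeleton traced by $\varphi\circ\gamma$ must terminate at a genuine endpoint of one of the nested discs, not at a type~$3$ interior point nor at a type~$4$ non-rigid endpoint.
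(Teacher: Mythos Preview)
Your outline for the continuous extension (steps 1--2) is essentially the paper's Lemma~\ref{lem:extension}: one follows the path $\varphi(x_r)$ and uses that it eventually sits inside a single disc or annulus of $\NL(X,0)\setminus\Div(Y)$, whose compactification in $\NL(X,0)$ is a compact real tree, so the limit exists. Your treatment of the rigid case via Corollary~\ref{corollary_topology} and Lemma~\ref{lemma_extension_to_type1} is also in line with the paper's final paragraph.

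The real issue is your third step, and it is not merely an ``obstacle'' but a genuine gap. Your argument that the tail of $\varphi\circ\gamma$ ``picks out a single tree-theoretic endpoint of $V_{y_Y}$'' does not work as stated: once you know $x^*\in V_{y_Y}$, the tail of $\varphi\circ\gamma$ converges to $x^*$ \emph{inside} $V_{y_Y}$ and is therefore not proper as a map into $V_{y_Y}$, so Theorem~\ref{thm:R-tree} does not force it onto the skeleton near an end. Worse, types~3 and~4 are not distinguished from type~1 by tree-topological position alone: a type~3 point sits on the skeleton of every sufficiently fine annulus containing it, and a type~4 point is a leaf just like a type~1 point. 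Nothing in your refinement scheme prevents the nested $V_{y_Y}$ from shrinking to such a point.

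The paper abandons the topological route entirely at this stage and uses a valuation-theoretic argument exploiting that $\varphi$ is an isomorphism of \emph{locally ringed spaces}. If $\varphi(0)$ were of type~3, one could find functions $f,g$ near $\varphi(0)$ with $\log|f|/\log|g|$ tending to an irrational number; pulling back by $\varphi$ contradicts the fact that $0\in D$ has rational rank~1. If $\varphi(0)$ were of type~4, one could find bounded functions $f_n,t$ whose ratios $\log|f_n|/\log|t|$ generate a non-finitely-generated subgroup of~$\Q$; pulling back to a small disc around $0$, these ratios are integer ratios of $T$-adic valuations, hence lie in a cyclic group. This is the missing idea: you must use the analytic structure carried by $\varphi$, not just the underlying tree, to exclude types~3 and~4.
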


Let us first take for granted  the following lemma.
\begin{lem}\label{lem:extension}
	The map $\varphi \colon D\setminus\{0\}\to\NL(X,0)$ extends uniquely to a continuous map $D\to\NL(X,0)$.
\end{lem}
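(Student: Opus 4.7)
The plan is to produce the extension at $0$ via a limit argument: as $x\to 0$ in $D$, we show that $\varphi(x)$ converges in $\NL(X,0)$ to a unique point, which we then declare to be $\varphi(0)$. Uniqueness of any such continuous extension is automatic from the density of $D\setminus\{0\}$ in $D$ and the Hausdorffness of $\NL(X,0)$. For the existence, set for each $\varepsilon\in(0,1)$
\[
V_\varepsilon = \big\{x\in D \,\big|\, 0<|T(x)|<\varepsilon\big\}
\qquad\text{and}\qquad
U_\varepsilon := \varphi(V_\varepsilon)\subset \NL(X,0).
\]
The $V_\varepsilon$ form a decreasing family of nonempty connected subsets of $D\setminus\{0\}$ with $\bigcap_\varepsilon V_\varepsilon=\emptyset$, so the same holds for the $U_\varepsilon$. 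By compactness of $\NL(X,0)$ and the finite intersection property, $K:=\bigcap_\varepsilon \overline{U_\varepsilon}$ is a nonempty compact subset of $\NL(X,0)$ that contains every accumulation value of $\varphi(x_n)$ for sequences $x_n\to 0$.

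The core step is to show that $K$ reduces to a single point, via a stabilization argument indexed by formal modifications. Fix any formal modification $\Y$ of $(X,0)$. The set $\Div(\Y)$ is finite, so the emptiness of $\bigcap_\varepsilon U_\varepsilon$ forces $U_\varepsilon\cap\Div(\Y)=\emptyset$ for all sufficiently small $\varepsilon$. The connectedness of $U_\varepsilon$ and Proposition~\ref{proposition_propertiesNL_2} then yield a unique closed point $p_\Y\in \Y_0$ (independent of $\varepsilon$ by nesting) such that $U_\varepsilon\subset \cent_\Y^{-1}(p_\Y)$. Using the anti-continuity of $\cent_\Y$, for any open $V\subset\Y_0$ containing $p_\Y$ the pre-image $\cent_\Y^{-1}(V)$ is closed in $\NL(X,0)$ and contains $\cent_\Y^{-1}(p_\Y)$, hence also its closure; in particular, every $y\in K\subset\overline{\cent_\Y^{-1}(p_\Y)}$ satisfies $\cent_\Y(y)\in V$ for every open $V\ni p_\Y$, which means $p_\Y\in\overline{\cent_\Y(y)}$, so $\cent_\Y^{-1}(p_\Y)\subset \cent_\Y^{-1}(\overline{\cent_\Y(y)})$.

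To conclude I would apply the neighborhood-basis description of Proposition~\ref{prop:topo_NL}: for any $y\in K$ and any open neighborhood $W$ of $y$ in $\NL(X,0)$, there exists a modification $\Y$ with $\cent_\Y^{-1}(\overline{\cent_\Y(y)})\subset W$, and by the previous paragraph this set contains $U_\varepsilon$ for all sufficiently small $\varepsilon$. It follows that $\varphi(x_n)\to y$ for every sequence $x_n\to 0$ in $D\setminus\{0\}$, and Hausdorffness of $\NL(X,0)$ then forces $K=\{y\}$. Setting $\varphi(0):=y$ yields the required continuous extension, the same inclusion furnishing continuity at $0$. The main technical point is the stabilization placing each small $U_\varepsilon$ inside a single center-fibre $\cent_\Y^{-1}(p_\Y)$; once this is in place, the rest is a soft topological consequence of Proposition~\ref{prop:topo_NL} and the anti-continuity of the center maps, with no further analytic input required.
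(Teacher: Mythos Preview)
Your proof is correct and takes a genuinely different route from the paper's. The paper argues via the explicit $\R$-tree structure of $\NL(X,0)$: using Corollary~\ref{corollary_global_topology} it decomposes $\NL(X,0)$ into discs, finitely many annuli, and finitely many type~2 points, then follows the injective path $\gamma(r)=\varphi(x_r)$ along the skeleton of $D$, observes that $\gamma$ eventually lands in a single disc or annulus $A$, and uses the uniquely arcwise connectedness and compactness of $\overline{A}$ (Theorem~\ref{thm:R-tree}) to produce the limit and check continuity via the segment-based neighborhood bases of trees.

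Your argument bypasses the tree structure entirely: it is a soft compactness--connectedness argument that pins down the limit through the center-map machinery (Propositions~\ref{prop:topo_NL} and~\ref{proposition_propertiesNL} together with anticontinuity). This is more abstract and more portable---nothing in it is specific to dimension two or to the $\R$-tree geometry---whereas the paper's approach gives a more concrete picture of where the limit sits. One small point worth making explicit: you use that $\NL(X,0)$ is Hausdorff, which the paper does not state as such but which follows from the identification with the compact subset $\mathrm{L}(X,\mathfrak m)$ of $X^{\mathrm{an}}$ in the proof of Proposition~\ref{prop:topo_NL}.
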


\begin{proof}[Proof of Proposition~\ref{lem:keyboundary}]

We continue to denote by $\varphi$ the continuous map on $D$ obtained via Lemma~\ref{lem:extension}.
We will prove by contradiction that $\varphi(0)$ is either a divisorial point or a rigid point of $\NL(X,0)$.

Suppose that $\varphi(0)$ is a point of type 3 of $\NL(X,0)$.
Then there exist an open subspace $U$ of $\varphi(D\setminus\{0\})$ with $\varphi(0)\in\overline U$ and two non constant functions $f$ and $g$ on $U$ such that
\[
\lim_{y\in U,y\to \varphi(0)}\frac{\log|f(y)|}{\log|g(y)|}
\]
 exists and is an irrational number.
Therefore the same holds for the ratio of the logarithms of the absolute values of the two nonconstant functions $\varphi^*f$ and $\varphi^*g$ on $\varphi^{-1}(U)$, tending towards $0$.
This yields a contradiction since $0$ is a rigid point of $\NL(X,0)$, therefore its valuation ring has rational rank $1$ over $k$.

Now suppose that the point $\varphi(0)$ is of type 4.
Then we can find $U$ as above, bounded functions $\{f_n\}_{n\in\N}$, and $t$ on $U$ such that the group 
\[
\Gamma = 
\bigg\langle 
\lim_{y\in U,y\to \varphi(0)}\frac{\log|f_n(y)|}{\log|t(y)|}
\bigg\rangle_{n\in\N}
\]
is not a finitely generated subgroup of $\Q$. 
Now consider the bounded functions $F_n=\varphi^*f_n$ and $T=\varphi^*t$ on $\varphi^{-1}(U)$.
Since $\varphi(U)\cup\{0\}$ is an open neighborhood of $0$ in $D$ we can find a smaller disc $D'$ centered in $0$ such that all the $F_n$ and $T$ are defined on $D'\setminus\{0\}$.
The functions $F_n$ and $T$ extend to bounded functions on $D'$, therefore they can be expressed as power series in a variable $X$.
Observe now that we have 
\[
\lim_{y\to \varphi(0),y\in U}\frac{\log|f_n(y)|}{\log|t(y)|}=
\lim_{y\to 0,y\in D}\frac{\log|F_n(y)|}{\log|T(y)|}= \frac{\mathrm{val}_X (F_n(X))}{\mathrm{val}_X (T(X))}.
\]
This implies that $\Gamma$ is a subgroup of $\frac{1}{\mathrm{val}_X (T(X)} \Z$, hence it is finitely generated, which is a contradiction.
It follows that $\varphi(0)$ can only be a rigid point or a divisorial point of $\NL(X,0)$.

If $\varphi(0)$ is a rigid point then it has a neighborhood $U$ isomorphic to a disc thanks to Corollary~\ref{corollary_topology}, and without loss of generality we can take a smaller disc $D$ if needed and assume that $\varphi(D)$ is contained in $U$.
We obtain an injective analytic map from a punctured disc to disc, 
thus this map necessarily extends as an isomorphism from $D$ onto its image.
\end{proof}

\begin{proof}[Proof of Lemma~\ref{lem:extension}]
By Corollary~\ref{corollary_global_topology}, the compact set $\NL(X,0)$ is a disjoint union of discs, finitely many annuli and finitely many (type 2) points.
Recall the definition of $x_r$ for $r\in (0,1)$ from Theorem~\ref{thm:R-tree} and define the injective continuous map $\gamma(r) = \varphi(x_r)$ from $(0,1)$ to $\NL(X,0)$. 
Since $\gamma$ is injective, there exists a subset $A$ of $\NL(X,0)$ that is either a disc or an annulus and such that $\gamma(r) \in A$ for all $r$ sufficiently small. 
Recall that  $A$ is uniquely arcwise connected by Theorem~\ref{thm:R-tree} (i), and has either one or two endpoints by Theorem~\ref{thm:R-tree} (ii). 
Moreover, the complement of $A$ in its closure $\overline{A}$ in $\NL(X,0)$ contains either one or two points by Corollary~\ref{corollary_global_topology}.
This implies that $\overline{A}$ is both uniquely arcwise connected and compact.
It follows that $\gamma$ extends continuously as a  map from $[0,1)$ to $\overline{A}$ by setting $\gamma(0) = \lim_{r\to 0} \gamma(x_r)$.
We set $\varphi(0) = \gamma(0)$, and claim that the resulting map $\varphi\colon D \to \overline{A} \subset \NL(X,0)$ is continuous. 
Since $\overline{A}$ and $D$ are uniquely arcwise connected, one may define the segment joining any two points $x, y$, and we denote it by $[x,y]$. 
For any $0< r <  1$, consider the set $U_r \subset \overline{A}$ (resp. $V_r \subset D$) consisting of those points $x$ such that
$\varphi(x_r) \notin [x, \varphi(0)]$ (resp. $x_r \notin [x, 0]$). These sets form bases of neighborhoods of $\varphi(0)$ in $\overline{A}$ and $0$ in $D$ respectively.
Since $\varphi$ is continuous and injective on $D\setminus \{0 \}$, one has $\varphi(V_r) \subset U_r$, which implies the continuity of $\varphi$ at $0$. 
\end{proof}

\subsection{The implication \ref{condition_prime} $\implies$ \ref{condition_graph}}

Let $T$ be a finite and nonempty set of rigid points of $\NL(X,0)$, let $U$ be an open subset of $\NL(X,0)$ whose closure $\overline U$ in $\NL(X,0)$ is strictly contained in $\NL(X,0)$, and let $\varphi\colon \NL(X,0) \setminus T \to U$ be an isomorphism.
We will show that $(X,0)$ has a self-similar dual graph.

Since every point $x$ of $T$ has a neighborhood in $\NL(X,0)$ that is a disc with origin $x$, by repeatedly applying Lemma~\ref{lem:keyboundary} we deduce that $\partial U=\overline U\setminus U$ is a finite subset of $\NL(X,0)$ consisting of rigid and divisorial points.
Moreover, we can extend $\varphi$ to each point of $T$ whose image is a rigid point and therefore assume without loss of generality that $\partial U$ only consists of divisorial points.
Observe also that $\partial U$ is %equal to the topological boundary $\partial \overline U=\overline U\setminus \big(\overline U \big)^\circ$ of $\overline U$; it is
nonempty since $\overline U$ is strictly contained in $\NL(X,0)$.

%Each point of $T$ can be seen as the equivalence class of the order of vanishing along an irreducible germ of a curve on $(X,0)$.
Let $C$ be a germ of curve on $(X,0)$ whose components correspond to the points of $T$, let $\pi \colon X' \to X$ be the good resolution of $(X,0)$ that also resolves the germ $C$ and that is minimal with respect to this property (see e.g. \cite[Theorem 5.12]{laufer:normal2dimsing}), and let $Z$ be another good resolution %\footnote{By this I mean that the exceptional divisor of $Z \to X$ is a divisor with (not necessarily strict) normal crossing. This is the minimal assumption needed to define a dual graph (possibly with loops, but we don't care about those.}
 of $(X,0)$ resolving the germ $C$ and whose divisorial set $\Div(Z)$ contains $\partial U$.

Set $S=\big(\Div(Z) \setminus U \big) \cup \varphi (\Div(X'))$ and let $\Y'$ be the formal modification whose divisorial set is $S$, as given by Theorem~\ref{theorem_existence_modifications}.
We claim that $\Y'$ is algebraized by a good resolution of $(X,0)$ which resolves the germ $C$. 
This boils down to verifying the conditions of Theorem~\ref{theorem_characterization_resolutions}.

In order to do so, let $V$ be any connected component of $\NL(X,0) \setminus S$. 
Recall that $S$ contains $\Div(Z)$ hence $\partial{U}$,
so that $V$ is either disjoint from ${U}$, or contained in it.
In the first case, $V$ is a connected component of $\NL(X,0) \setminus \Div(Z)$. 
Since $Z$ is a good resolution of $(X,0)$ and $C$,  it follows that $V$ has the form we want by Theorem~\ref{theorem_characterization_resolutions}.
In the second case, $V$ is isomorphic through $\varphi$ to a connected component of $\NL(X,0) \setminus (\Div(X') \cup T)$.
As $X'$ is a resolution of $C$, such a component is a disc, an annulus, or a disc with the origin removed (which is itself isomorphic to an annulus, as observed in Remark~\ref{remark_disc_minus_point}).
This completes the proof of the fact that $\Y'$ is algebraized by a good resolution $Y'$ of $(X,0)$ and $C$.

\smallskip

Now let $\Y$ be the formal modification of $(X,0)$ whose divisorial set is $\Div(Z) \setminus U$ and denote by $y$ the point of the reduction $\Y_0$ of $\Y$ corresponding as in Proposition~\ref{proposition_propertiesNL_3} to the connected component $U$ of $\NL(X,0) \setminus \Div(\Y)$.
Let $\cJ_y$ be the ideal of $\widehat{\cO_{\Y,y}}$ which defines $\Y_0$ locally around $y$.
Observe now that we have a sequence of isomorphisms of complete local rings
\begin{multline*}
\widehat{\cO_{\Y,y}} 
\mathop{\cong}\limits^{\mathrm{Prop.}\ref{proposition_propertiesNL_1}} 
\mathcal O_{\NL(\Y,y)}^\circ \big( \NL(\Y,y) \big)
\mathop{\cong}\limits^{\mathrm{Lem.}\ref{lemma_extension_to_type1}}
 \mathcal O_{\NL(\Y,y)}^\circ \big( \NL(\Y,y) \setminus V(\cJ_y)\big)\\
\mathop{\cong}\limits^{\mathrm{Prop.}\ref{proposition_propertiesNL}}
 \mathcal O_{\NL(X,0)}^\circ \big( U \big) 
 \mathop{\cong}\limits^{\varphi}
  \mathcal O_{\NL(X,0)}^\circ \big(\NL(X,0)\setminus T) 
\mathop{\cong}\limits^{\mathrm{Lem.}\ref{lemma_extension_to_type1}}
\mathcal O_{\NL(X,0)}^\circ \big(\NL(X,0) ) 
\mathop{\cong}\limits^{\mathrm{Prop.}\ref{proposition_propertiesNL_1}} 
 \widehat{\cO_{X,0}},
 \end{multline*}
which tells us that $\varphi$ induces an isomorphism of formal schemes $\widehat{\Y/y}\cong \Spf\big(\widehat{\cO_{X,0}}\big)$.

\smallskip

The inclusion $\Div(\Y) \subset \Div(\Y')$ yields a morphism of formal schemes $\psi\colon \Y' \to \Y$ (geometrically this morphism is the contraction of the exceptional components of $\Y'$ corresponding to divisorial points in $U$).
Then $\varphi$ induces an isomorphism of formal schemes between the formal completion $\Y''=\widehat{\Y'/\psi^{-1}(y)}$ of $\Y'$ along $\psi^{-1}(y)$ and the formal completion $\X'=\widehat{X'/\pi^{-1}(0)}$ of $X'$ along its exceptional divisor $\pi^{-1}(0)$.

Since $X'$ is the minimal good resolution of $(X,0)$ and $C$, the resolution $\mu\colon Y'\to X$ factors through a morphism $\rho \colon Y' \to X'$.
Now observe that since $\widehat{\Y'/\psi^{-1}(y)}$ and $\widehat{X'/\pi^{-1}(0)}$ are isomorphic as formal schemes, then $\Dual\big(\psi^{-1}(y)\big)$ and $\Dual\big(\pi^{-1}(0)\big)$ are isomorphic as weighted graphs (see section \ref{ssec:dualgraphs} for the definition of the weighted graph associated to a good resolution).
Indeed, let $x$ be a point of $\Div(X')$ corresponding to a component $E$ of $\pi^{-1}(0)$ and denote by $\varphi(E)$ the component of $\psi^{-1}(y)$ corresponding to the point $\varphi(x)\in \Div(Y')$.
Then we have the following sequence of field isomorphisms 
\[
k(E)\cong\widetilde{\mathscr H(x)}\cong \widetilde{\mathscr H\big(\varphi(x)\big)} \cong k\big(\varphi(E)\big)
\]
since $\varphi$ is an isomorphism of locally ringed spaces, hence $E$ and $\varphi(E)$ have the same genus.
Moreover, we have $\widehat{Y'/\varphi(E)}=\widehat{\Y''/\varphi(E)} \cong \widehat{\X'/E}=\widehat{X'/E}$, which implies that $(E\cdot E)=(\varphi(E)\cdot\varphi(E))$ because the degree of the normal bundle of $E$ in $X'$ is the same as the degree of the formal normal bundle of $E$ in $\X'$, 
%\footnote{This is simply because $\mathfrak M/\mathfrak M^2$ is the same in a ring and in its $\mathfrak M$-adic completion, because we're killing a power of $\mathfrak M$ anyway.}, 
and the same holds for the normal bundle of $\varphi(E)$ in $Y'$.

We now observe that the weighted dual graph $\Dual\big(\psi^{-1}(y)\big)$ is a subgraph of $\Dual\big((\pi\circ\rho)^{-1}(0)\big)$, and the latter is obtained by a sequence of simple modifications from $\Dual\big(\pi^{-1}(0)\big)$ since $\rho \colon Y' \to X'$ is a birational morphism between smooth surfaces. 
This shows that $(X,0)$ has a self-similar dual graph, which is what we wanted to prove.
\hfill \qed

\bigskip

For the reader convenience, the following diagram summarizes the constructions made in the course of the proof above.

\begin{displaymath}
\xymatrix@C=1pc@R=1.3pc@M=3pt@L=3pt{
  & Y' \ar@{->}[d]^{\rho} \ar@{->}[dl]_{\psi} && \widehat{{Y'}/{(\pi\circ\rho)^{-1}(0)}}  \ar@{->}[ll] \ar@{->}[d]^{\cong} && \widehat{Y'/\psi^{-1}(y)}  \ar@{_{(}->}[ll] \ar@{->}[dll]^{\cong}_{\varphi}\\
Y \ar@{->}[dr] & X' \ar@{->}[d]^{\pi} && \widehat{X'/\pi^{-1}(0)} \ar@{->}[ll]\\
  & X
}
\end{displaymath}

%%%%%%%%%%%%%%%%%%%%%%%%%%%%%%%%%%%%%%%%%%%%%%%%%%%%%%%%%%%%%%%%%%%%%%%%%%%%%%%%%%%%
%%%%%%%%%%%%%%%%%%%%%%%%%%%%%%%%%%%%%%%%%%%%%%%%%%%%%%%%%%%%%%%%%%%%%%%%%%%%%%%%%%%%
%%%%%%%%%%%%%%%%%%%%%%%%%%%%%%%%%%%%%%%%%%%%%%%%%%%%%%%%%%%%%%%%%%%%%%%%%%%%%%%%%%%%

\section{Sandwiched singularities are determined by their dual graphs}\label{section_plumbing}
The goal of this section is two-fold. 
We first prove that a normal surface singularity is sandwiched if %and only if
 its (weighted) dual graph is sandwiched.
This is an extension of \cite[Proposition~1.11]{spivakovsky:sandsingdesingsurfNashtransf} to arbitrary characteristic. 
We then explain how this allows to prove the implication \ref{condition_graph} $\implies$ \ref{condition_sandwiched} of Theorem~\ref{mainthm}.

\subsection{Dual graphs of sandwiched singularities}\label{ssec:dualgraphs}

Let $(X,0)$ be any normal surface singularity, and let $(Y,D) \to (X,0)$ be a good resolution of the singularities of $(X,0)$. 
We define the weighted dual graph $\Dual (D)$ as follows. 
Its set of vertices is the set of the irreducible components of $D$, and there is an edge connecting two vertices if and only if the corresponding components intersect. 
The weight of a vertex is the pair of positive integers $\big(g(E),-E^2\big)$ consisting of the genus and of the opposite of the self-intersection of the corresponding component $E$ of $D$.

\begin{thm}\label{thm:extend-spiv}
Let $(X,0)$ be a normal surface singularity and assume there exists a good resolution of $(X,0)$ whose associated weighted dual graph is sandwiched.
Then $(X,0)$ is sandwiched.
\end{thm}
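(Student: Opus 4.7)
The plan is to realize the combinatorial sandwich $\Dual(D) = \Gamma \hookrightarrow \Gamma'$ geometrically as a blowup $\varphi\colon Z \to \A^2_k$ with $\Dual\big(\varphi^{-1}(0)\big) \cong \Gamma'$, and then to match the formal completion of $Z$ along the subdivisor $E$ corresponding to $\Gamma$ with $\widehat{Y/D}$, where $(Y,D)\to(X,0)$ is the given good resolution. Contracting $E$ will then yield a sandwiched singularity whose complete local ring is isomorphic to $\widehat{\mathcal O_{X,0}}$.

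For the construction of $\varphi$, every simple modification of weighted graphs has a tautological geometric counterpart on a smooth surface: a simple modification at a vertex corresponds to blowing up a smooth point of the associated curve, and a simple modification at an edge corresponds to blowing up an intersection point of two curves. Starting from the blowup of the origin in $\A^2_k$, whose exceptional divisor realizes the trivial graph with one vertex of weight $(0,1)$, and iterating the simple modifications that build $\Gamma'$, one obtains $\varphi\colon Z \to \A^2_k$. Crucially, each simple modification at a vertex leaves a positive-dimensional choice, namely the position of the new blowup center on the corresponding projective line. Letting $E \subset \varphi^{-1}(0)$ be the subdivisor corresponding to $\Gamma$ via the embedding, Artin's contractibility criterion yields a contraction of $E$ to a normal singular point $0'$ of a surface $X'$, and $(X', 0')$ is sandwiched by construction.

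The heart of the argument is to produce an isomorphism $\widehat{Y/D} \cong \widehat{Z/E}$ of formal schemes. Since $\Gamma$ is sandwiched, all of its vertices carry genus $0$, so all components of $D$ (and of $E$) are rational. Such a formal neighborhood is determined, up to isomorphism, by gluing formal neighborhoods of $\P^1$-components along their double points. The gluing datum at a vertex $v$ consists of the formal neighborhood of $E_v \cong \P^1$, which is pinned down by its self-intersection, together with the configuration on $E_v$ of the double points where neighboring components meet, considered up to the natural action of $\mathrm{Aut}(\P^1) = \PGL_2(k)$. The freedom in the positions of the blowup centers defining $\varphi$ provides precisely the moduli needed to match these configurations vertex by vertex; implementing this matching by induction on the sequence of blowups composing $\varphi$ should yield the desired isomorphism $\widehat{Y/D} \cong \widehat{Z/E}$, and hence $\widehat{\mathcal O_{X,0}} \cong \widehat{\mathcal O_{X',0'}}$.

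The main obstacle is translating Spivakovsky's plumbing argument from the complex-analytic category into formal geometry. Concretely, one has to establish that the formal neighborhood of a smooth rational curve in a smooth formal surface is determined up to isomorphism by its self-intersection, that transverse gluing at each double point can be performed formally, and that the $\PGL_2$-ambiguity in configurations of double points is exactly absorbed by adjusting blowup centers in the iterative construction of $\varphi$. Once this formal plumbing toolkit is in place, the identification of $\widehat{Y/D}$ with $\widehat{Z/E}$ should be routine, and the sandwich $Z \to X' \to \A^2_k$ transported along the isomorphism $(X,0) \cong (X',0')$ proves the theorem.
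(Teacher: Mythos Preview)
Your strategy differs from the paper's in a key structural way. You propose to first realize $\Gamma'$ by a blowup $Z \to \A^2_k$ and then prove $\widehat{Z/E} \cong \widehat{Y/D}$ by adjusting blowup centers. The paper instead starts from $\widehat{Y/D}$ and \emph{extends} it: after reducing to the case where every vertex of $\Gamma' \setminus \Gamma$ has weight $(0,1)$ and valence $1$, it glues onto $\widehat{Y/D}$, for each such vertex adjacent to a component $E$ of $D$ with $E^2 = -m$, a formal piece $\X_m$ cut out of a blown-up Hirzebruch surface. The gluing uses only the uniqueness (Brieskorn in characteristic zero, Lee--Nakayama in general) of the formal neighborhood of a \emph{single} smooth rational curve of prescribed self-intersection, together with transitivity of its automorphism group to place the attachment point at a smooth point of $D$. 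The resulting formal scheme $\Y'$ has regular dual graph $\Gamma'$ and contains $\widehat{Y/D}$ as a closed formal subscheme \emph{by construction}; Grauert--Artin then contracts $\Y'_0$ to a smooth formal point, this is algebraized, and contracting $D$ inside the algebraization exhibits the sandwich.

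The payoff is that the paper never has to show that two independently built formal neighborhoods of a tree of rational curves agree. Your matching step asks for exactly this: that the isomorphism class of $\widehat{Y/D}$ is determined by the weighted dual graph together with the cross-ratio data on high-valence components, and that every such datum is realized by some choice of blowup centers. For trees the inductive gluing should indeed go through (no cocycle obstructions), and your dimension count for the moduli is correct, but turning this into a proof is genuine work --- it is precisely the ``formal plumbing toolkit'' you flag as the main obstacle. The paper's one-curve-at-a-time extension sidesteps it entirely. Conversely, once your matching is established, $\widehat{\mathcal O_{X,0}} \cong \widehat{\mathcal O_{X',0'}}$ follows immediately from functoriality of contraction, whereas the paper routes this final identification through a chain of isomorphisms of rings of bounded analytic functions on non-archimedean links.
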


Over the complex numbers, this result is due to Spivakovsky.
His proof is complex analytic in nature, relying on plumbing constructions in an essential way. 
We proceed in very much the same way, using an analogue of plumbing in formal geometry.

\begin{proof}
Suppose that %$(X,0)$ is a normal surface singularity, that 
$(Y,D)$ is a good resolution of singularities of $(X,0)$, that the associated weighted dual graph $\Dual(D)$ is sandwiched, and choose an embedding of $\Dual(D)$ in a regular weighted graph $\Gamma$.
We can assume without loss of generality that each of the vertices of $\Gamma\setminus\Dual(D)$ has weight $(0,1)$ and valence 1, so that it is only adjacent to a vertex of $\Dual(D)$.
Indeed, each connected component of $\Gamma\setminus\Dual(D)$ is itself regular and can be contracted to a single vertex adjacent to $\Dual(D)$ and of weight $(0,1)$, see \cite[Proposition 1.13]{spivakovsky:sandsingdesingsurfNashtransf}).

We will now build a smooth formal $k$-scheme $\Y'$ whose reduction $\Y'_0$ is a curve with simple normal crossings such that $\Dual(\Y'_0)\cong\Gamma$, together with a closed immersion of formal schemes $\widehat{Y/D} \to \Y'$. 
We will do so by means of a patching procedure, using as elementary building blocks the following smooth formal $k$-schemes of dimension  2.
For $n>0$, let $\mathbb{F}_n = \mathbb{P}_{\mathbb P^1_k} \big( \cO_{\mathbb P^1_k} \oplus \cO_{\mathbb P^1_k}(n) \big)$ be the Hirzebruch surface of order $n$ over $k$ and let $F$ denote the rational curve of self-intersection $-n$ in $\mathbb F_n$.
Pick a point $p$ in $\mathbb F_n$ that does not lie in $F$, call $F'$ the curve in the canonical rational fibration passing through $p$, and denote by $\widetilde{\mathbb F_n}$ the blowup of $\mathbb F_n$ at $p$.
Define $\X_{n}$ to be the formal completion of $\mathbb F_n'$ along the union of $F$ with the strict transform of $F'$ in $\widetilde{\mathbb F_n}$, so that the reduction of $\X_n$ consists of two smooth and rational curves that intersect transversally and have self-intersection $-n$ and $-1$ respectively.
Now take a vertex $v$ in $\Gamma\setminus \Dual(D)$ and let $E$ be the component of $D$ corresponding to the vertex of $\Dual(D)$ adjacent to $v$.
Since $\Gamma$ is regular, $E$ is rational and has negative self-intersection, say $E^2=-m<0$.
Consider the formal $k$-scheme $\X_m$ constructed above and denote by $F$ its rational curve of self-intersection $-m$ and by $q$ the point of $F$ that intersects the other component of $(\X_m)_0$.
The formal completions $\widehat{Y/E}$ and $\widehat{\X_m/F}$ are isomorphic by \cite[Theorem 2.11]{LeeNakayama2013} (in characteristic zero this result was obtained earlier in \cite[Satz 2.10]{Brieskorn1967}), and, after composing with an automorphism of  $\widehat{\X_m/F}$ if necessary (for instance one induced by a suitable automorphism of the Hirzeburch surface), we can choose such an isomorphism $\widehat{\X_m/F}\cong\widehat{Y/E}$ sending $q$ to a point of $E$ that is smooth in $D$.
Therefore we can glue a copy of $\X_m$ to $\widehat{Y/D}$, obtaining a smooth formal $k$-scheme whose associated dual graph is the weighted graph spanned by $\Dual(D)$ and the vertex $v$.
By repeating this procedure for every vertex in $\Gamma\setminus \Dual(D)$ we obtain the formal scheme $\Y'$ as we wanted.
Observe that by construction the formal scheme $\widehat{Y/D}$ is isomorphic to the closed formal subscheme of $\Y'$ obtained by formally completing the latter along $D$.

Since $\Gamma\cong\Dual(\Y_0)$ is regular, Grauert-Artin contractibility criterion \cite{artin:contractibilityalgebraicspaces} gives a formal modification $\pi\colon \Y'\to \mathcal{Z}$ contracting $\Y_0$ to a point $\{z\}=\mathcal Z_0$ in a smooth two-dimensional formal $k$-scheme $\mathcal Z$. 
Having only one point, $\mathcal Z$ is affine, therefore it can be algebraized by a smooth surface $Z$ over $k$.
It follows that $\Y'$, being a formal good resolution of $(Z,z)$, is itself algebraized by a good resolution $\pi\colon Y'\to Z$ by \cite[Proposition 7.6]{fantini:normspaces}.

Using the other contractibility criterion by Artin, \cite[Theorem 2.3]{artin:contractibilitycriterion}, one can then also contract the divisor $D$ of $Y'$ to a (possibly singular) point $z'$ of a normal surface $Z'$ over $k$, which yields a modification $\varpi\colon Y'\to Z'$ such that $\pi$ factors as $Y' \stackrel{\varpi}{\longrightarrow} Z' \stackrel{\tau}{\longrightarrow} {Z}$. 
Observe that the local ring $\widehat{\cO_{Z',z'}}$ is sandwiched since $Z$ is smooth. 

Finally, observe that we have
\begin{displaymath}
\widehat{\cO_{X,0}}
	\cong 
\cO^\circ\big(\NL(X,0)\big)
	\cong
\cO^\circ\big(\NL(Y,D)\big)
	\cong
\cO^\circ\big(\NL(Y',D)\big)
	\cong
\cO^\circ\big(\NL(Z',z')\big)
	\cong
\widehat{\cO_{Z',z'}},
\end{displaymath}
where the first and the last isomorphisms come from Proposition~\ref{proposition_propertiesNL_1} and the others follow from the invariance of non-archimedean links under modifications.
This proves that $\widehat{\cO_{X,0}}$ is sandwiched, which is what we wanted to show.
\end{proof}

\begin{rmk}\label{rem-equivalence}
	The converse to the theorem is also true: the dual graph $\Dual(D)$ is sandwiched if $(X,0)$ is sandwiched. 
	A direct proof of this fact will be given in Remark~\ref{rem:64}.
\end{rmk}

\subsection{The implication \ref{condition_graph} $\implies$ \ref{condition_sandwiched}}

Let $(X,0)$ be any normal surface singularity such that $\Dual(D)$ is self-similar for some
good resolution of singularities $(Y,D) \to (X,0)$.  
By Theorem~\ref{thm:graph-sdw}, the weighted graph $\Dual(D)$ is sandwiched,
which implies that $(X,0)$ is sandwiched by Theorem~\ref{thm:extend-spiv}. \hfill$\qed$

%%%%%%%%%%%%%%%%%%%%%%%%%%%%%%%%%%%%%%%%%%%%%%%%%%%%%%%%%%%%%%%%%%%%%%%%%%%%%%%%%%%%

% \newpage

\section{End of the proof of Theorem~\ref{mainthm}: Kato data}\label{sec-Kato data}
In this section, we prove the two implications  \ref{condition_sandwiched} $\implies$  \ref{condition_kato}, and 
 \ref{condition_kato}  implies the condition \ref{condition_prime} from \S\ref{section_proof_main}, 
concluding the proof of our main theorem. 

\subsection{The implications \ref{condition_sandwiched} $\implies$  \ref{condition_kato}}\label{sec:164}
A \textit{Kato datum} \label{def:kato datum} for a normal surface singularity $(X,0)$ is a modification $\pi \colon (X',D) \to (X,0)$, together with an  isomorphism of complete local rings $\widehat{\mathcal O_{X',p}} \cong \widehat{\mathcal O_{X,0}}$ for some $p\in D$.
Note that in particular $\pi$ is not an isomorphism.

Suppose $(X,0)$ is a sandwiched singularity. 
Proving \ref{condition_sandwiched} $\implies$  \ref{condition_kato}  amounts to constructing a Kato datum for $(X,0)$. 
To do so, recall that we may find a proper birational morphism $h \colon Z \to \A^2_k$ and a point $z \in h^{-1}(0)$ such that 
$\widehat{\mathcal O_{Z,z}} \cong \widehat{\mathcal O_{X,0}}$. 
Choose a good resolution of singularities $g \colon Y \to X$ of $(X,0)$, and pick an arbitrary point $q\in g^{-1}(0)$.
Since $Y$ is smooth at $q$ we can find an \'etale map $\varphi \colon Y \to \A^2_k$ mapping $q$ to the origin. 
We  consider the fibered product $X' = Y \times_{\A^2_k} Z$, so that the following diagram is commutative
\[
\xymatrix{
X' = Y \times_{\A^2_k} Z
\ar[r]^-\varpi \ar[d]^\psi & Y \ar[d]^\varphi \ar[r]^g & X\\
Z \ar[r]^h & \A^2_k &
}
\]
Observe that the projection map $\varpi\colon X' \to Y$ is birational since $h$ is, and $\psi \colon X' \to Z$ is \'etale  since \'etale morphisms are preserved by base change. 
The image of $X'$ in $Z$ contains $h^{-1}(0)$ since $\varphi(q) =0$. 
We may thus find a point $p\in X'$ over $q$ such that $\psi(p) = z$. 
Since $\psi$ is \'etale it induces an isomorphism of complete local rings  $\widehat{\mathcal O_{X',p}} \cong \widehat{\mathcal O_{Z,z}}$, and the latter is isomorphic to $\widehat{\mathcal O_{X,0}}$.
It follows that the composition $g \circ \varpi \colon X' \to X$, which is a modification (that is, $(g \circ \varpi)^{-1}(0)$ is a Cartier divisor) because it factors through the resolution $Y$, defines a Kato datum for $(X,0)$.  \hfill$\qed$

\subsection{The implication \ref{condition_kato} $\implies$ \ref{condition_prime}}\label{sec:165} 
Suppose that $(X,0)$ is a normal surface singularity and that $\pi \colon (X',D) \to (X,0)$ is a Kato datum for $(X,0)$, with $p\in D$ a point such that $\widehat{\mathcal O_{X',p}} \cong \widehat{\mathcal O_{X,0}}$.
By Proposition~\ref{proposition_propertiesNL_3} there exists a finite set $T'$ of type 1 points of $\NL(X',p)$ such that the open subspace $U=\mathrm{c}_{X'}^{-1}(p)$ of $\NL(X,0)$ is isomorphic to $\NL(X',p)\setminus T'$.
Since $\widehat{\mathcal O_{X',p}} \cong \widehat{\mathcal O_{X,0}}$, it follows that $U\cong \NL(X,0) \setminus T$ for some finite set $T$ of type 1 points of $\NL(X,0)$.
Finally, the closure $\overline{U}$ of $U$ in $\NL(X,0)$ is strictly contained in the latter, since it is contained in $U\cup \Div(X')$ as observed in Remark~\ref{remark_closure_component}.
\hfill$\qed$

\begin{rmk}\label{rem:64}
Let us sketch a proof of the implication \ref{condition_kato} $\implies$ \ref{condition_graph}. 
Although not necessary to complete the proof of Theorem~A, we obtain in this way a direct argument showing the implication \ref{condition_sandwiched} $\implies$ \ref{condition_graph}, which is equivalent to saying that every sandwiched singularity has a sandwiched dual graph, as mentioned in Remark~\ref{rem-equivalence}.
Suppose that $(X,0)$ is a normal surface singularity and that $\pi \colon (X',D) \to (X,0)$ is a Kato datum for $(X,0)$, with $p\in D$ a point such that $\widehat{\mathcal O_{X',p}} \cong \widehat{\mathcal O_{X,0}}$.
Without loss of generality we may assume that $D$ has simple normal crossings away from $p$, so that in particular $X'$ is smooth away from $p$.
Let $\mu \colon Y \to X$ and $\mu' \colon Y' \to X'$ be the minimal good resolutions of the singularities of $(X,0)$ and $(X',p)$ respectively.  
Since $Y'$ is a good resolution of $(X',p')$ it is also a good resolution of $(X,0)$ so that there exists a birational morphism $\pi' \colon Y' \to Y$ satisfying $\mu \circ \pi' = \pi \circ \mu'$.
This morphism is not an isomorphism, or $\pi$ would be an isomorphism as well.
Moreover, since $Y'$ and $Y$ are smooth, $\pi'$ is a composition of point blow-ups. 
The graph $\Dual\big((\pi\circ\mu')^{-1}(0)\big)$ is thus a nontrivial modification of $\Dual\big(\mu^{-1}(0)\big)$ and contains $\Dual\big((\mu')^{-1}(p)\big)$ which is isomorphic (as a weighted graph) to $\Dual\big(\mu^{-1}(0)\big)$. 
This proves that the latter graph is sandwiched as required.
\end{rmk}

%%%%%%%%%%%%%%%%%%%%%%%%%%%%%%%%%%%%%%%%%%%%%%%%%%%%%%%%%%%%%%%%%%%

% \newpage

\section{Complex analytic sandwiched singularities}
\label{section_complexanalytic}
In this section we  work with \emph{complex analytic varieties}.
In this context, a sandwiched singularity $(X,0)$ is a germ of normal complex analytic surface such that the completion of its analytic local ring is sandwiched in the sense of Definition~\ref{def-sandwich}.
Our aim is to characterize such singularities in terms of their complex link, proving Theorems~\ref{thm3} and~\ref{thm2}.

\subsection{Pseudoconvex $3$-folds} \label{sec:pdcvx3folds}

Let us recall some definitions and terminology from~\cite{kato:cpctcplxsurfwithGSPH}.
A \emph{real-analytic strongly pseudoconvex $3$-fold} $\Sigma$ is a smooth (real-analytic) hypersurface in a smooth complex surface $S$ such that for any $p\in \Sigma$ there exists an open subset $U$of $S$ containing $p$ and a real-analytic strictly plurisubharmonic function $\rho \colon U \to \R$ such that $\Sigma \cap U = \rho^{-1}(0)$.
A real-analytic strongly pseudoconvex $3$-fold \emph{bounds a Stein domain} if $\Sigma$ is compact and there exists an embedding of a tubular neighborhood of $\Sigma$ in $S$ into
a normal (but possibly singular) complex surface $X$ such that $X \setminus \Sigma$ has one component which is Stein. 

We shall need one more piece of terminology.
A compact real $3$-fold $\Sigma$ in a compact complex surface $S$ is said to be \emph{global} when $S\setminus \Sigma$ is connected. 

\smallskip

Archimedean links are the main example of real-analytic strongly pseudoconvex $3$-folds bounding a Stein domain. 
Let $(X,0)$ be a normal surface singularity, and fix an embedding of $(X,0)$ inside the unit ball in some complex affine space $\C^n$ such that $0$ is sent to the origin.
The function $\rho(z) = \rho(z_1, \ldots , z_n) := \sum_{i=1}^n |z_i|^2$ is real-analytic and strictly plurisubharmonic.
There exists $\eps_0$ small enough so that for any $0 < \eps < \eps_0$ the intersection of the sphere of center $0$ and radius $\eps$ with $X$ is transversal, so that $\LC{\eps}(X,0) := X \cap \{ \rho = \eps\}$ is a real-analytic strongly pseudoconvex $3$-fold, and
bounds the Stein domain $X_\eps := X \cap \{\rho < \eps\}$.
Such $\eps_0$ may be taken maximal satisfying the above property, and any $\eps$ strictly smaller than the threshold $\eps_0$ above is said to be \emph{admissible}.

Observe that the diffeomorphism type of $\LC{\eps}(X,0)$ does not depend on the choice of an admissible $\eps$, but its embedding as a real-analytic $3$-fold in $X$ does.
In fact the diffeomorphism type of $\LC{\eps}(X,0)$ is also independent on the choice of an embedding in $\C^n$, see~\cite[Proposition~2.5]{MR3112993}. 

%We shall however abuse terminology and talk about the archimedean link of $(X,0)$ for any real-analytic $3$-fold  $\LC{\eps}(X,0)$ for any $\eps< \eps_0$.

\subsection{Kato surfaces}\label{ssec:Katosurfaces}

A special case of real-analytic strongly pseudoconvex $3$-folds is given by \emph{spherical shells}, corresponding to the boundary of the unit ball $B$ in $\C^2$, i.e., to the link of a regular point $(X,0)$.
In \cite{kato:cptcplxmanifoldsGSS}, M. Kato considers the following construction to produce compact complex surfaces admitting a global spherical shell.
Let $Y$ be any connected open neighborhood of $\overline{B}$ in $\C^2$, 
and let $\pi\colon Y' \to Y$ be a proper bimeromorphic map which is an isomophism above $Y \setminus \{0\}$.
Let $y$ be a point in $\pi^{-1}(0)$, and pick a relatively compact neighborhood $U$ of $y$ in $\pi^{-1}(B)$ such that there exists a biholomorphism $\sigma\colon Y \to U$. 
We call the pair $(\pi,\sigma)$ a \emph{regular geometric Kato datum}, to distinguish this definition from the one given in \S\ref{sec:164}.
%We will also refer to it as \emph{regular} to emphasize that it is defined on a regular point $0 \in B$.
% One can define a compact complex surface $S=S(\pi,\sigma)$, called \emph{Kato surface}, by patching together $\pi^{-1}(\overline{B}) \setminus \sigma(B)$ along the outer and inner boundaries $\pi^{-1}(\partial B)$ and $\sigma(\partial B)$ using $\sigma \circ \pi$.
One can define a compact complex surface $S=S(\pi,\sigma)$, called \emph{Kato surface}, obtained from $Y' \setminus \sigma(\overline{B})$ by gluing $Y'\setminus \pi^{-1}(\overline{B})$ and $\sigma(Y\setminus \overline{B})$ using $\sigma \circ \pi$

% \begin{rmk}\label{rmk:patching}
% The precised patching procedure needs to be done along open sets in order to get a complex surface structure on $S$.
% To this end, consider nested balls $B_- \subset B \subset B_+ \subset Y_0$, and set $A_{\pm} = B_+ \setminus \overline{B_-}$
% %where $B_-$ and $B_+$ are open balls centered at the origin and of radii $\eps_- < \eps_+ < 1$, and set $A_{\eps_-,\eps_+} = B_+ \setminus \overline{B_-}$.
% The surface $S$ is then obtained by patching together $\pi^{-1}(B_+) \setminus \sigma(\overline{B_-})$ using $\sigma \circ \pi$.
% In this case, the gluing is between the two open sets $\pi^{-1}(A_{\pm})$ and $\sigma(A_{\pm})$, where $\sigma \circ \pi$ acts as a biholomorphism.
% To ease the presentation, we will omit this thickening process for the following gluing procedures.
% %\matteo{We could add here comments on how we could present gluing procedures using CR-geometry.}
% \end{rmk}

Kato surfaces have been studied intensively in the literature, see for example the monograph of G. Dloussky~\cite{dloussky:phdthesis}. They are  compact complex surfaces with negative Kodaira dimension, $b_1 =1$, $b_2 >0$, and they admit a global spherical shell. %(see \cite{dloussky-oeljeklaus-toma:class70surfaceswithcurves}
 In the Kodaira classification of compact complex surfaces, they belong to the VII$_0$ class, and it is believed that they are the only examples of surfaces in this class having $b_2>0$, 
 see~\cite{MR2726099}.

%In \cite{kato:cpctcplxsurfwithGSPH} Kato shows that one can replace spherical shells with real-analytic strongly pseudoconvex $3$-folds for the characterization of Kato surfaces.
%In this section we show how to rephrase such characterization in terms of archimedean links of singularities.

\subsection{Proof of Theorem~\ref{thm3}}\label{ssec:proofthm3}
In this section we fix a sandwiched singularity $(X,x_0)$.  Our aim is to realize its archimedean link as a global real-analytic strongly pseudoconvex $3$-fold in a compact complex surface $S$ that has a global spherical shell.

% More precisely, we fix a closed embedding of $X$ in the open unit ball in $\C^n$, we define $\rho (z) := \sum_{i=1}^n |z_i|^2$ and write $\LCep(X,0):= X \cap \{\rho = \epsilon\}$  for any admissible $\eps>0$ as above.
% To simplify notation we write $X_\eps:= X \cap \{ \rho < \eps\}$.

Our first objective is to construct a complex analytic version of a Kato datum attached to  $(X,x_0)$.
% To that end we mimic the construction devised in \S \ref{sec:164}.

Let $Y$ be a connected neighborhood of the closed unit ball in $\C^2$. 
By Artin's approximation theorem and \S \ref{ssec:sandwich}, we may find  a proper bimeromorphism $\pi \colon Y' \to Y$ that is an isomorphism over $Y\setminus \{0 \}$, and a non-empty connected divisor $D \subset \pi^{-1}(0)$ such that the normal complex analytic germ obtained by contracting $D$ to a point is isomorphic to $(X,x_0)$. 
We write $\mu \colon Y' \to X$ for the contraction morphism. 
It is a proper bimeromorphism that is a local isomorphism at any point of $D$ and contracts $D$ to the point $x_0$.

Observe that we may (and shall) assume that the support of $D$ 
is the union of all rational curves in  the exceptional divisor $\pi^{-1}(0)$ of self-intersection $\le -2$
by~\cite[Corollary 1.14]{spivakovsky:sandsingdesingsurfNashtransf}.
In that case  the map $\mu$ is the minimal good resolution of  $(X,x_0)$. Define the proper bimeromorphism $\eta \colon X \to Y$ by setting $\eta := \pi \circ \mu^{-1}$.

%We denote by $\mu : Y_1 \to X$ the contraction map, and by $x_0=\mu(D)$ the image of the contracted divisor in $X$.
%Then $(X,x_0)$ is a sandwiched singularity, and the map $\mu$ defines its minimal good resolution.
%It is a proper modification which contracts the images under $\mu$ of the $(-1)$ curves  of $\pi^{-1}(0)$.
Consider any embedding of a local neighborhood of $x_0$ in $X$ into the unit ball in $\C^n$ that sends $x_0$ to the origin, so that we can talk about its complex link $\LC{\eps}(X,x_0)$ for any admissible $\eps \ll 1$. 
Recall the definition of the Stein domain $X_\eps$ from \S\ref{sec:pdcvx3folds}. 

% Let $\mu : Y \to X_\eps$ be the minimal resolution of singularities, and pick any point $y\in g^{-1}(0)$.
Now pick any point $y \in \mu^{-1}(x_0)$, and choose a neighborhood $U$ of $y$ such that $\mu(U)$ is relatively compact in $X_\eps$ and there exists a biholomorphism $\sigma\colon Y \to U$ (we possibly have to shrink $Y$ in order to do that).
We construct a complex surface $X'$ by patching together $Y' \setminus \sigma(\overline{B})$ and $X$ 
using $\sigma \circ \eta \colon X \setminus \eta^{-1}(\overline{B}) \to \sigma(Y\setminus\overline{B})$.  
% (which is a biholomorphism on $U \setminus \{y\}$).

% \begin{rmk}\label{rmk:patching}
% To be more precise, the patching procedure mentioned above needs to be done along open sets, in order to get a complex surface structure on $X'$.
% To this end, consider the nested balls $B_- \subset B_+ \subset B$, where $B_-$ and $B_+$ are open balls centered at the origin and of radii $\delta_- < \delta_+ < 1$.
% Then, $X'$ is obtained by patching together $\pi^{-1}(B_+) \setminus \sigma(\overline{B_-})$ and $\eta^{-1}(B_+)$ using $(\sigma \circ \eta)^{-1}$.
% In this case, the gluing is between the two open sets $\sigma(B_+ \setminus \overline{B_-})$ and $\eta^{-1}(B_+ \setminus \overline{B_-})$, where $\sigma \circ \eta^{-1}$ acts as a biholomorphism.
% To ease the presentation, we will omit this thickening process for the following gluing procedures.
% \matteo{We could add here comments on how we could present gluing procedures using CR-geometry.}
% \end{rmk}

The identification of $X$ with its copy inside $X'$ induces a holomorphic map $\tilde{\sigma} \colon X \to X'$ that is a biholomorphism onto its image, and such that $\tilde{\sigma}(X)$ is relatively compact inside $X'$.
We also have a proper bimeromorphic map $\tilde{\pi} \colon X' \to X$ defined by $\tilde{\pi} = \mu$ on $Y' \setminus \sigma(\overline{B})$ and by $\tilde{\pi}= \mu \circ \sigma \circ \eta$ on $X$.

%The data consisting of the restrictions of $\tilde{\pi}$ and $\tilde{\sigma}$ to $X'_\eps$ and $X_\eps$ define a geometric Kato datum analogous to the one defined in \S\ref{ssec:Katosurfaces}.
%
%\smallskip
We proceed to construct a compact complex surface following Kato's construction as in the previous section. Pick three admissible positive real numbers $ \eps_- < \eps< \eps_+$, and set $X'_t = \tilde{\pi}^{-1}(X_t)$ for every $t$ in $\{\eps_-, \eps, \eps_+\}$. 
Define the surface $\tilde{S}$ considering $X'_{\eps_+} \setminus \tilde{\sigma}(\overline{X_{\eps_-}})$ and gluing together
$X'_{\eps_+} \setminus \overline{X'_{\eps_-}}$ and $\tilde{\sigma}(X_{\eps_+} \setminus \overline{X_{\eps_-}})$ via the map $\tilde{\sigma} \circ \tilde{\pi}$. 
Observe that the canonical map  $\overline{X'_\eps} \setminus \tilde{\sigma}(X_\eps) \to \tilde{S}$ is surjective
hence  $\tilde{S}$ is compact. 
It is a smooth surface, since the only singularity of $X'_{\eps_+}$ lies inside $\tilde{\sigma}(\overline{X_{\eps_-}})$.

%
%Consider $\overline{X'_\eps} \setminus \tilde{\sigma}(X_\eps)$. Its boundary consists of two connected components: an \emph{outer} component $\partial X'_\eps$, and an inner component $\tilde{\sigma}(\partial X_\eps)$.
%The map $\tilde{\sigma} \circ \tilde{\pi}$ acts as a biholomorphism from the outer boundary to the inner boundary (more precisely, on some neighborhoods of such boundaries).
%We construct a surface $\tilde{S}=S(\tilde{\pi},\tilde{\sigma})$ by patching together outer and inner boundary of $\overline{X'_\eps} \setminus \tilde{\sigma}(X_\eps)$ using $\tilde{\sigma} \circ \tilde{\pi}$.
%Notice that $\tilde{S}$ is compact, because of the natural continuous projection from the compact $\overline{X'_\eps} \setminus \tilde{\sigma}(X_\eps)$ to $S$.
%Moreover, $\tilde{S}$ is smooth, since the only singularity of $X'_\eps$ is inside $\tilde{\sigma}(X_\eps)$.
%\begin{comment}
%\begin{rmk}
%The patching procedure is again done by thickening the patching areas so that they are open sets (see Remark \ref{rmk:patching}).
%Pick admissible $\eps_- < \eps <\eps_+$, and set $A_{\eps_-,\eps_+}:=X_{\eps_+} \setminus \overline{X_{\eps_-}}$.
%The correct patching procedure is obtained by gluing together $X'_{\eps_+} \setminus \tilde{\sigma}(\overline{X_{\eps_-}})$ along the sets $\tilde{\pi}^-1(A_{\eps_-,\eps_+})$ and $\tilde{\sigma}(A_{\eps_-,\eps_+})$ using $\tilde{\sigma} \circ \tilde{\pi}$.
%% It is easy to see that different choices of $\eps_-$ and $\eps_+$ give different surfaces, that are biholomorphic one to the other.
%\end{rmk}
%\end{comment}

By construction, $\tilde{S}$ contains a neighborhood of the archimedean link $\LC{\eps}(X,x_0)$, that is hence realized as a real-analytic strongly pseudo-convex $3$-fold bounding a Stein domain in $\tilde{S}$.

To see that $\tilde{S} \setminus\LC{\eps}(X,x_0)$ is connected, it is enough to see that $X'_{\eps} \setminus \overline{\tilde{\sigma}(X_{\eps})}$ is connected.
But $\tilde{\sigma}(\LC{\eps}(X,x_0))$ is a compact and connected real $3$-fold in $X'_{\eps}$, hence $X'_{\eps} \setminus \tilde{\sigma}(\LC{\eps})$ has at most two components, one of which is $\tilde{\sigma}(X_{\eps})$.
This implies that $X'_{\eps} \setminus \overline{\tilde{\sigma}(X_{\eps})}$ is connected.

It remains to see that $\tilde{S}$ contains a global spherical shell.
This follows from~\cite[Proposition~2]{kato:cpctcplxsurfwithGSPH}, whose proof is given on pp.~541--546. The proof should be read by replacing $Z(\delta)$ (resp. $A$, $0^*$, and $g$) by $X'_\eps$ (resp. by $\tilde{\pi}^{-1}(x_0)$, $\sigma(x_0)$ and $\tilde{\sigma} \circ \tilde{\pi}$).

%Let us briefly explain why this proposition applies in our context. 
%In op. cit. $Z(\delta)$ corresponds to $X_\eps$ and $Z^*(\delta)$ to $Y$.
%The set $A$ is reduced to the empty set 
%
% a fact that is proved in~\cite[pp.~541--546]{kato:cpctcplxsurfwithGSPH}. Let us briefly explain the strategy of proof of M.~Kato referring to the original article for details. 
%
%As above consider  $g: Y \to X_\eps$ be the minimal resolution of singularities of $X_\eps$ (in Kato's terminology $X_\eps = Z(\delta)$ and $Y = Z^*(\delta)$). The map $\pi\circ \sigma$ on $X_\eps$ lifts to holomorphic map $f: Y\to Y$ ($g^*$ in op. cit.)
%By~\cite[Lemma~4]{kato:cpctcplxsurfwithGSPH}, $\cap_{n\ge0}f^n(Y)$ is reduced to a point $O^{**}$.
%By~\cite[Lemma~5]{kato:cpctcplxsurfwithGSPH} there exists a strongly psh function $u$ ($\varphi$ in op. cit.) defined in a neighborhood of $O^{**}$ such that for all $c\ge0$ small enough, 
%\begin{itemize}
%\item
%$\{ u < c\}$ is a neighborhood of $0^{**}$ that is biholomorphic to the unit ball;
%\item
% $\{ u < c\}$ is relatively compact in $f^{-1}\{ u < c\}$; 
%\item
%$f$ induces a biholomorphism from $f^{-1}(\{ u < c\}\setminus \{0^{**}\})$ onto $\{ u < c\}\setminus \{0^{**}\}$.
%\end{itemize}

One can also see it directly, by showing that $\tilde{S}$ is biholomorphic to the Kato surface $S=S(\pi,\sigma)$ associated with the regular Kato datum $(\pi,\sigma)$.
Define a map $\eta'\colon X'\to Y'$ by setting $\eta'=\id$ on $Y' \setminus \sigma (\overline{B})\subset X'$, and $\eta'=\sigma \circ \eta$ on $X\subset X'$.
Denote by $S^*$ the surface obtained from $X' \setminus \tilde{\sigma}\big(\eta^{-1} (\overline{B})\big)$ by gluing $X'\setminus \tilde{\pi}^{-1}\big(\eta^{-1} (\overline{B})\big)$
and $\tilde{\sigma}\big(Y\setminus \eta^{-1} (\overline{B})\big)$
using $\tilde{\sigma} \circ \tilde{\pi}$. 
The map $\eta'$ then induces a biholomorphism between $S^*$ and $S$.
We show that $S^*$ and $\tilde{S}$ are biholomorphic.

To that end, consider the surface $S'$ obtained from $X' \setminus \tilde{\sigma}(\overline{X_{\eps_-}})$ by gluing $X' \setminus \overline{X'_{\eps_-}}$ to $\tilde{\sigma}(X\setminus \overline{X_{\eps_-}})$ via the map $\tilde{\sigma} \circ \tilde{\pi}$.
The inclusions $\imath^*\colon X' \setminus \tilde{\sigma}(\eta^{-1} (\overline{B})) \hookrightarrow X' \setminus \tilde{\sigma}(\overline{X_{\eps_-}})$ and $\tilde{\imath}\colon X'_{\eps_+} \setminus \tilde{\sigma}(\overline{X_{\eps_-}}) \hookrightarrow X' \setminus \tilde{\sigma}(\overline{X_{\eps_-}})$ induce biholomorphisms $\Phi^*\colon S^* \to S'$ and $\tilde{\Phi}\colon\tilde{S} \to S'$.\hfill$\qed$

\subsection{Proof of Theorem~\ref{thm2}}

As above, we fix a closed embedding of $X$ in the open unit ball in $\C^n$, define $\rho (z) := \sum_{i=1}^n |z_i|^2$, and we write $\LC{\eps}(X,0):= X \cap \{\rho = \eps\}$ for any admissible $\eps>0$, $X_\eps:= X \cap \{ \rho < \eps\}$, and $A_{\eps_-,\eps_+} := X_{\eps_+} \setminus \overline{X_{\eps_-}}$.

We suppose that for some admissible $\eps$ the manifold $\LC{\eps}(X,0)$ can be realized as a global strongly pseudoconvex $3$-fold in a smooth compact complex surface $S$.
By assumption, there exist $\eps_- < \eps < \eps_+$ and a holomorphic embedding $\imath \colon A_{\eps_-,\eps_+} \to S$ such that $S\setminus \imath(A_{\eps_-,\eps_+})$ is connected.
% We may assume that $\imath$ extends holomorphically to a larger region $A_{\tilde{\eps}_-,\tilde{\eps}_+}$ with $\tilde{\eps}_- < \eps_- < \eps_+ < \tilde{\eps}_+$.

Let $X'$ be the surface obtained by gluing $S\setminus \imath(\LC{\eps}(X,0))$ and $X_{\eps_+}$ together along the open subsets $\imath (A_{\eps,\eps_+})$ and $A_{\eps,\eps_+}$ via the map $\imath^{-1}$. 
This surface has a single singularity at the point $0'$ corresponding to the point $0$ of $X_{\eps_+}$, and $(X',0')$ is a normal singularity which is analytically isomorphic to $(X,0)$.
Denote by $\sigma$ the map induced by the identification of $X_{\eps_+}$ inside $X'$.
It is a holomorphic embedding mapping the singular point $0$ to $0'$.

Observe that there is a biholomorphic copy $A$ of $A_{\eps_-,\eps}$ that is included in $X'$ and whose complement in $X'$ is compact.
Since $X_{\eps}$ is Stein, the natural biholomorphism $A \to A_{\eps_-,\eps}$ extends to a proper bimeromorphic map $\pi \colon X' \to X_{\eps}$ by a classical theorem of Hartogs.
Note that $\pi$ could be a local isomorphism at $0$, and it does not necessarily send $0'$ to $0$.

\smallskip

If $\pi(0')\neq 0$, then $\pi$ is a proper bimeromorphic map from $(X',0')$ onto a smooth point in $X_{\eps}$,
therefore $(X',0')$ (and hence $(X,0)$) is a sandwiched singularity as was to be shown. 
We may thus assume that $\pi(0') = 0$ so that $0'$ is fixed by the map $f:=\sigma\circ \pi \colon X' \to X'$.

\smallskip

First suppose that $\pi$ does not induce a local biholomorphism from $(X',0')$ to $(X,0)$.
Then the maps $\pi \colon X' \to X_\eps$ and $\sigma\colon X_\eps \to X'$ define a Kato datum in the sense of page~\pageref{def:kato datum}.
We deduce that $(X,0)$ is sandwiched by the implication \ref{condition_kato}$\Rightarrow$\ref{condition_sandwiched} of Theorem~\ref{mainthm}.
%Moreover one can repeat the arguments of the previous section, to show that $S$ is a Kato surface. %(see~\cite[Proposition~2]{kato:cpctcplxsurfwithGSPH})$.
%We thus fall into case \ref{item:thm2c} of the theorem. \charles{refer to Kato}
The fact that $\tilde{S}$ contains a global spherical shell follows similarly as before from~\cite[Proposition~2]{kato:cpctcplxsurfwithGSPH}.

% and consider 
%the minimal good resolutions of singularities $g: Y\to X_\eps$, and $g' : Y' \to X$.
%By minimality there exists a proper bimeromorphism $\pi' : Y' \to Y$ such that 
%$h:= g\circ \pi' = \pi \circ g'$. Since $\pi$ is not a biholomorphism near $0'$, the map $\pi'$ is not
%a biholomorphism from a neighborhood of $h^{-1}(0)$ to a neighborhood of $g^{-1}(0)$.
%
%The weighted dual graph $\Dual(h^{-1}(0))$ is thus a non trivial modification of
%$\Dual(g^{-1}(0))$ that contains $\Dual((g')^{-1}(0))$ which is isomorphic to $\Dual(g^{-1}(0))$.
%It follows that the weighted dual graph $\Dual(g^{-1}(0))$ is self-similar and 
%Theorem~\ref{mainthm} implies $(X,0)$ to be sandwiched.

\smallskip

Now suppose that $\pi$ induces a local biholomorphism from $(X',0')$ to $(X,0)$.
% We shall arrive at a contradiction.
Then the map $f$ induces a local biholomorphism from $(X',0')$ to itself and we have  
$K:=\bigcap_{n\in\N} f^n(\sigma(X_\eps)) = \{ 0'\}$. 
Indeed, $K$ is a compact subset of $\sigma(X_\eps)$, and, since the latter can be realized as a bounded set in $\C^n$, Montel's theorem applies and all limits of the sequence of iterates $\{f^m\}_{m\ge0}$ should be constant (this argument is due to M.~Kato, see the proof of \cite[Lemma 2]{kato:cptcplxmanifoldsGSS}). But $f$ is fixing $0'$, hence $K = \{ 0'\}$. 
In  other words, $f \colon (X',0')\to (X',0')$ is a contracting automorphism in the terminology of~\cite{MR3270424}.

Let $S(f)$ be the space of orbits of $f$, that is the quotient of $X'\setminus\{0'\}$ by the equivalence relation defined by $x \simeq x'$ if and only if $f^n(x) = f^m(x')$ for some positive $n,m\in\N$. This space is naturally a compact complex surface. Observe that one has a natural holomorphic map $S\setminus \imath\big(\LC{\eps}(X,0)\big) \subset X'$
to $S(f)$, and that this map actually descends to a map $S \to S(f)$.
It follows that $S$ is a modification of $S(f)$.  

The singularity $(X,0)$ and the geometry of $S(f)$ are completely described in~\cite[Theorem 7.5]{MR3270424}. 
We are thus in one of the following situations.

\begin{enumerate}[label=Case \arabic{enumi}.,leftmargin=0pt, itemindent=40pt]\setlength\itemsep{1mm}
\item The singularity $(X,0)$ is a cyclic quotient singularity. In this case \cite[Corollary~B]{MR3270424}
shows that $S(f)$ is a Hopf surface. More precisely the universal cover of $S(f)$ is isomorphic to $\C^2\setminus\{0\}$ and its fundamental group is the subgroup of polynomial automorphisms generated by $\gamma$ and $\tilde{f}$ in the notations of \cite[Example~7.1]{MR3270424}. 
In particular, this fundamental group is not cyclic, so that $S(f)$ is a secondary Hopf surface. We are in case \ref{item:thm2b} of the theorem.

\item \label{item:2} There exist a Riemann surface $C$, a line bundle $L\to C$ of negative degree, and a finite group $G$ of automorphisms of $C$ that acts linearly on $L$. Let $X'$ be the surface obtained by contracting the zero section in the total space of $L$, and let $0'$ be the image of the zero section in $X'$.
Then the germ $(X,0)$ is isomorphic to a neighborhood of the image of $0'$ in the quotient space $X'/G$. 
Moreover, one can find a positive integer $N\ge 1$ and a complex number $\alpha$ of norm smaller than $1$ such that $f^N$ lifts to a linear map acting by multiplication by $\alpha$ on the fibers of $L$.
By~\cite[Lemma 8.1]{MR3270424}, the natural map $S(f^N)\to S(f)$ is a Galois cyclic (unramified) holomorphic cover.

\smallskip

\begin{enumerate}[label=Case 2\alph{enumii}.,leftmargin=0pt, itemindent=45pt]\setlength\itemsep{1mm}
\item Suppose first that the genus of $C$ is positive. 
Then $(X',0')$ is not rational, hence $(X,0)$ is not rational either by~\cite[Claim 6.11]{MR1368632}. 
In particular $(X,0)$ is not a quotient singularity.  By~\cite[Theorem~A]{MR3270424}, it follows that  $(X,0)$ is weighted homogeneous.
Finally the proof of \cite[Corollary~B]{MR3270424} shows that  $S(f^N)$ is a principal elliptic fiber bundle of Kodaira dimension $0$ or $1$ so that we are in case \ref{item:thm2a} of our theorem.
\item Suppose that $C=\P^1$ is the Riemann sphere. Then $(X',0')$ is a cyclic singularity and 
 $(X,0)$ is a quotient singularity. Again the proof of \cite[Corollary~B]{MR3270424} shows that
$S(f)$ is a Hopf surface.
If the group $G$ acting on $(X',0')$ is trivial, then $(X,0)$ is a cyclic quotient singularity, and the surface is either a secondary Hopf or a primary Hopf.
In the latter case it contains a global spherical shell and we are in case \ref{item:thm2c}.
In the former case we are in case \ref{item:thm2b} of our theorem. 
When $G$ is non-trivial, then the fundamental group of $S(f)$ is not cyclic hence it is a secondary Hopf surface; and we are in case \ref{item:thm2b}. 
\end{enumerate}
\end{enumerate}
This completes the proof of Theorem~\ref{thm2}. \hfill$\qed$

\bibliographystyle{alpha}
\bibliography{biblio}

\vfill

\end{document}